\definecolor{citegreen}{rgb}{0,0.8,0}
\definecolor{refred}{rgb}{0.8,0,0}
\newtheorem{theorem}{Theorem}
\newtheorem{proposition}[theorem]{Proposition}
\theoremstyle{definition}
\newtheorem{remark}[theorem]{Remark}
\newtheorem{definition}[theorem]{Definition}
\newcommand{\R}{\mathbb R}\newcommand{\s}{\mathcal S}
\numberwithin{equation}{section} \numberwithin{theorem}{section}
\newcounter{stepctr}
{\end{list}}
\def\XXint#1#2#3{{\setbox0=\hbox{$#1{#2#3}{\int}$}
     \vcenter{\hbox{$#2#3$}}\kern-.5\wd0}}
\DeclareMathOperator{\Span}{Span}
\DeclareMathOperator{\SO}{SO}
\DeclareMathOperator{\Sc}{Sc}
\DeclareMathOperator{\Ric}{Ric}
\DeclareMathOperator{\Div}{div}
\DeclareMathOperator{\Esp}{Exp_p}
\DeclareMathOperator{\vol}{Vol_{m+1}}
\DeclareMathOperator{\area}{Vol_m}
\newcommand{\e}{\varepsilon}
\newcommand{\set}[1]{\{ {#1} \}}
\newcommand{\norm}[1]{\| {#1} \|}
\newcommand{\scal}[2]{\langle {#1} , {#2} \rangle}
\newcommand{\riem}[3]{R_p({#1},{#2}){#3}}
\title{Double Bubbles with High Constant Mean Curvatures in Riemannian Manifolds}
\author{Gianmichele Di Matteo, Andrea Malchiodi}
\address{Gianmichele Di Matteo, Karlsruhe Institute of Technology (KIT), Englerstrasse 2, 76131 Karlsruhe, Germany}
\address{Andrea Malchiodi, Scuola Normale Superiore, Piazza dei Cavalieri 7, 56126 Pisa, Italy}
\begin{document}

\begin{abstract}
We obtain  existence of  double bubbles of large and constant mean curvatures in Riemannian manifolds. These arise as perturbations of geodesic standard double bubbles centered at critical points of the ambient scalar curvature and aligned along eigen-vectors of the ambient Ricci  tensor. We also 
obtain general multiplicity results via Lusternik-Schnirelman theory, and extra ones in 
case of double bubbles whose opposite boundaries have the same mean curvature. 
\end{abstract}

\maketitle

\begin{center}
\textbf{2020 Mathematics Subject Classification:} 55J20, 35B40, 53A10, 53C42.\\
\textbf{Keywords}: Double bubbles, constant mean curvature, isoperimetric problems. 
\end{center}

\section{Introduction}\label{sec.introduction}

The study of isoperimetric problems in different contexts has always been of great relevance 
both from the theoretical point of view, as well as from that of applications. On one hand, 
it has been one of the driving forces for the existence and regularity theory 
of minimal and constant mean curvature hypersurfaces, see  e.g. \cite{almgren76}. 
On the other, together with some variants, such problems are significant in different 
 models involving interfaces, see e.g. \cite{finn}. 
In this paper we will consider  \underline{closed smooth Riemannian manifolds} $(M^{m+1},g)$, that is compact without boundary,
both when recalling contributions from the literature and when stating our results, 
even though some of them could extend to other settings, such as the complete 
case or that of Euclidean domains: we will spend some extra words on these aspects later on.

Existence of isoperimetric regions in compact manifolds, within the class of sets with finite perimeter, 
is rather standard to achieve. Their regularity is a deeper question, in parallel with 
that of minimal surfaces, and it is well-known to generally hold only in  dimension $m < 7$, see e.g. \cite{morgan03}. 
There are though special situations in which regularity is guaranteed in all dimensions, 
as for example for isoperimetric sets with a small volume constraint - which might model droplets in a non-homogeneous environment - 
as shown in \cite{morganjohnson}
using Heintze-Karcher's inequality and Allard's regularity theorem. 
 In such a case, isoperimetric sets are known 
to be smooth graphs over some geodesic balls of small radii in the manifold: in \cite{druetpams} and \cite{nar} 
 the scalar curvature of $(M,g)$ is shown to play a role both in terms of the isoperimetric 
constant, as well as for the localization of  extremal sets (see also \cite{fall} for the case of manifolds with boundary). 

In the latter examples, critical domains have boundaries with large and constant mean curvature. A complementary aspect of the problem is to construct 
sets with such properties since, apart from isoperimetric sets, 
they might include  local minima or unstable 
critical points of the area functional under a small volume constraint. Their existence 
has been proved in e.g. \cite{ye} and \cite{pac} using the characterization of  Jacobi's operator 
on spheres and a finite-dimensional reduction of the problem (see also \cite{lm10}, \cite{lms20} 
for Willmore-type surfaces of small area). One might indeed look 
for candidate solutions as {\em pseudo-bubbles} - using the terminology from \cite{nar} - that 
solve the problem up to some Lagrange multiplier. In a final step, one then adjusts their 
center so that the Lagrange multipliers vanish as well: this is possible for example 
near non-degenerate critical points of the scalar curvature of $(M,g)$.

 \
 
 In this paper we are concerned with  counterparts of the latter results for {\em double-bubbles} in manifolds. 
 These are among the simplest cases of  {\em clusters}, namely 
 partitions  of the domain under interest into sets with prescribed volumes and so that the total 
 measure of the boundaries is minimal or extremal in some sense. Such models are important in 
 applications, as they describe for example foams, liquid crystals, magnetic domains and biological cells. 
 We refer the reader to the two books \cite{morgan09} and \cite{maggi} for a more detailed 
 introduction and for some relevant existence and regularity results.

Double bubbles (also referred to as {\em standard double bubbles}) in the Euclidean space $\R^{m+1}$ consist 
of two attached chambers with three boundary components that are caps of $m$-dimensional spheres, one of which 
is common to both chambers. In \cite{hut}, \cite{reichardt0} and \cite{reichardt}, as it was conjectured for some time, 
it was shown that these are solutions of the isoperimetric problem for two-chambers clusters. 
As it happened for isoperimetric sets of small volumes in manifolds, it is then natural to 
look for bubble clusters of large and constant (in each boundary component) mean curvature. However, differently from the 
single-chamber case, their characterization should be given by their orientation other than 
their location in the manifold (see also \cite{imm1}, \cite{imm2} for related issues concerning non-spherical 
Willmore surfaces of small area). Our first result answers precisely this question under generic local assumptions
on the metric. Some parts of the statement are not completely precise, but the meaning will be 
clear from the arguments of the proof.

 \begin{theorem}\label{th.nondeg}
 	Given a closed Riemannian manifold $(M^{m+1},G)$ and three numbers $H_0\ge 0$, $H_1,H_2>0$ such that $H_1=H_0+H_2$, 
 	let $p \in M$ be a non-degenerate critical point of the scalar curvature, and let  $\mu$ be an eigenvalue 
 	of $Ric_p$. Then there exists a constant $\rho_0>0$ such that the following holds: 
 	 	for any scale $\rho \in (0,\rho_0)$ there exists 
 	a smooth double bubble, whose sheets have constant mean curvatures $\rho^{-1}H_0$, $\rho^{-1}H_1$ and $\rho^{-1}H_2$ 
  and meet pairwise  at $120^{\circ}$-degrees. 
 	Such a double-bubble is contained in a geodesic ball  with radius of order $\rho$ centered at $p$, and it is aligned 
 	along a direction in the eigenspace of $Ric_p$ corresponding to the eigenvalue $\mu$.
 \end{theorem}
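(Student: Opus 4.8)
The plan is to run a Lyapunov--Schmidt (finite-dimensional) reduction, exactly as in the single-bubble constructions of \cite{ye}, \cite{pac}, \cite{nar}, but over a parameter space that encodes both the \emph{center} of the cluster and its \emph{orientation}. First I would work in normal coordinates at $p$ and rescale the metric $G$ by $\rho^{-2}$, so that a double bubble of mean curvatures $\rho^{-1}H_i$ becomes a double bubble of mean curvatures $H_i$ in a metric $g_\rho$ that is $C^k$-close to the Euclidean one, with expansion $g_\rho = \delta + \rho^2(\text{curvature terms}) + O(\rho^3)$. In this rescaled picture the unperturbed object is the Euclidean standard double bubble $\Sigma_0$ with the prescribed $H_0,H_1,H_2$ (which exists and is unique up to isometry precisely because $H_1=H_0+H_2$ forces the $120^\circ$ balancing; note $H_0\ge0$ allows the degenerate flat middle sheet). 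One then parametrizes nearby hypersurface-clusters as normal graphs over $\Sigma_0$, with the interface triple junction constrained to stay a valid configuration; the natural unknowns are a function $w$ on $\Sigma_0$ (three sheets with matching conditions at the junction) plus the isometry parameters $(\xi,\Theta)\in B_1\times \SO(m+1)$ giving the translated, rotated copy of $\Sigma_0$.

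\textbf{The two steps.} Step one is the infinite-dimensional solvability: for each fixed $(\xi,\Theta)$ and each small $\rho$, solve the CMC system modulo the kernel of the linearized operator. The linearization of the CMC-cluster operator at $\Sigma_0$ is, on each sheet, the Jacobi operator $\Delta_{S} + |A|^2$ of a spherical cap, coupled through the angle conditions at the triple junction; its kernel is generated exactly by the restrictions of the ambient translations and rotations (this is the analogue of the classical computation that on a round sphere the Jacobi operator has kernel spanned by the coordinate functions). Away from this $\binom{m+1}{2}+(m+1)$-dimensional kernel the operator is invertible with uniformly bounded inverse, so the contraction mapping principle (or the implicit function theorem) produces a unique $w=w(\xi,\Theta,\rho)$, smooth in all parameters, solving the problem up to a Lagrange-type term living in the kernel. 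This step is essentially soft once the correct functional-analytic setup for the junction is fixed; I expect the main technical obstacle here to be handling the triple junction --- choosing function spaces (weighted H\"older or Sobolev spaces adapted to the edge) in which the coupled Jacobi operator is Fredholm and its kernel is as claimed, and verifying that the $120^\circ$ condition is preserved under the iteration. This is the step I would budget the most care for.

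\textbf{The reduced problem.} Step two is to kill the Lagrange multipliers by choosing $(\xi,\Theta)$ well. Plugging $w(\xi,\Theta,\rho)$ back in, the multipliers are the gradient (with respect to the finite-dimensional parameters) of a reduced energy $\mc F_\rho(\xi,\Theta)$, namely the area of the approximate cluster in $g_\rho$ with the volume constraints enforced. Using the metric expansion one computes
\begin{equation}
\mc F_\rho(\xi,\Theta) = c_0 + \rho^2\bigl(-c_1\,\Sc_G(p) + c_1\,\langle \nabla^2\Sc\, \xi,\xi\rangle/2 + c_2\,Q_\Theta(\Ric_p)\bigr) + o(\rho^2),
\end{equation}
where $c_0,c_1,c_2>0$ are explicit dimensional constants and $Q_\Theta(\Ric_p)$ is a quadratic expression in the components of $\Ric_p$ in the frame determined by $\Theta$ (the axis of the double bubble), whose critical points as $\Theta$ varies are exactly the alignments of the bubble axis with an eigendirection of $\Ric_p$. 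Since $p$ is a non-degenerate critical point of $\Sc_G$, the $\xi$-part has a non-degenerate critical point at $\xi=0$; and the $\Theta$-part has a critical point at any eigen-axis. A stability/persistence argument --- the finite-dimensional reduction of a $C^1$-close perturbation of a function with a non-degenerate (or, for the $\SO$ factor, suitably Morse--Bott) critical point still has a critical point nearby --- then yields, for all $\rho<\rho_0$, a genuine critical point $(\xi_\rho,\Theta_\rho)$ of $\mc F_\rho$, hence a true smooth CMC double bubble. Undoing the rescaling gives the cluster in a geodesic ball of radius $O(\rho)$ about $p$, aligned along the $\mu$-eigenspace, as claimed. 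The genericity caveat in the statement is exactly what makes the persistence step clean; without non-degeneracy one would instead invoke the Lusternik--Schnirelman count alluded to in the abstract.
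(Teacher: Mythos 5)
Your high-level plan --- perturb off a Euclidean standard double bubble, use a Lyapunov--Schmidt reduction with the kernel coming from isometries, then adjust the finite-dimensional parameters using the non-degeneracy of $\Sc$ and the eigenspace structure of $\Ric_p$ --- is the right conceptual architecture and matches the paper. But there is one gap in your step one that is not a technicality but a genuine obstruction, and it is where the paper spends most of its effort.

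You propose to parametrize nearby clusters ``as normal graphs over $\Sigma_0$, with the interface triple junction constrained to stay a valid configuration.'' Pure normal graphs cannot do this. If $x \in \Gamma$ is a junction point and the three sheets are moved by $w_\sigma N^\sigma$, the moved images of $x$ are $x+w_\sigma(x)N^\sigma(x)$, which coincide for the three sheets only if $w_0 N^0 = w_1 N^1 = w_2 N^2$ at $\Gamma$. Since $N^0, N^2$ are linearly independent (they make $120^\circ$) this forces $w_0 = w_1 = w_2 = 0$ on $\Gamma$. Imposing Dirichlet data on $\Gamma$ over-determines the already well-posed coupled Jacobi system and destroys the Fredholm picture with the $2m+1$-dimensional kernel you want. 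The paper's way out is to allow a \emph{tangential} component $Y_\sigma$ on each sheet, so that the map $x \mapsto x + w_\sigma N^\sigma + Y_\sigma$ can match across the junction (equations \eqref{eq.junctionGamma}--\eqref{eq.boundaryconditionu}); the price is that $Y$ is not determined by the CMC equation (the mean curvature depends on $Y$ only as a Lie derivative, hence at higher order), and one must \emph{choose} $Y$ compatibly with the boundary matching and with uniform $C^{1,\alpha}$ bounds. The paper does this by prescribing $\Div_{\Sigma^\sigma}(Y_\sigma)$, leading to an overdetermined non-elliptic boundary-value problem that it solves via Hodge-decomposition results of \cite{sch}; this is a non-trivial ingredient that your proposal never encounters because the normal-graph ansatz hides the issue. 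Without this, the contraction mapping step does not close.

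Two smaller points. First, the kernel of the linearized operator under the correct junction and equi-angularity conditions has dimension $2m+1$ (translations plus the $m$ rotations not fixing the axis, by the $\SO(m)$-isotropy of the standard double bubble; this is the content of \cite{dim}), not $(m+1)+\binom{m+1}{2}$ as you write; correspondingly the natural parameter space is $UTM$ (or, near $p$, a ball times $S^m$), not $B_1\times\SO(m+1)$. Second, in the step where you ``kill the Lagrange multipliers,'' the fact that a critical point of the reduced functional yields a \emph{genuine} CMC cluster is, in this setting, more delicate than in the single-sphere case: one must control the tangential component of the family of pseudo-bubbles as the parameter varies in order to avoid spurious boundary terms, which the paper does via a quantitative improved-convergence theorem from \cite{cic} together with the strong (nonlinear) equi-angularity condition built into the pseudo-bubble construction. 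Your appeal to a soft persistence-of-critical-points argument glosses over this, and it is precisely why the paper works with equi-angular pseudo-bubbles rather than merely with a normal-graph CMC solution modulo kernel.

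Your expansion of the reduced energy is essentially correct in spirit --- the leading $\rho^2$ coefficient is an affine combination of $\Sc(p)$ and $\Ric_p(\s,\s)$ with positive constants, and the localization in $\xi$ near the non-degenerate critical point of $\Sc$ is an equivalent way of packaging the paper's expansion \eqref{eq.approximateasym} --- so once the infinite-dimensional step is repaired the finite-dimensional endgame would go through.
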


\begin{remark}
	We decided to work in a Riemannian setting, but a completely similar result would hold for 
	Euclidean domains in presence of inhomogeneous and/or non-isotropic conditions, as it might 	result in applications, see the above-mentioned references. 
	Also, instead of prescribing large values of the mean curvatures of the boundary 
	components, we could have chosen to fix small  boundary areas or  volumes enclosed by the chambers.  
\end{remark}

\begin{remark}\label{r:uniq}
	If the eigenvalue $\mu$ in Theorem \ref{th.nondeg} is simple, then there are exactly two solutions as in the statement 
	if $H_0 \neq 0$ and exacly one if $H_0 = 0$, see the final comments in the proof of the theorem. 
\end{remark}

The abstract strategy to prove the above result follows conceptually the approach for the 
construction of small CMC spheres, relying on a finite-dimensional reduction. On the other 
hand, we face several crucial differences. First of all, expansions of geometric quantities like 
volumes and perimeters for the perturbed surfaces are technically more involved due to the presence of tangential components in the perturbation: these are needed to deal with the structure of 
piecewise-smooth hypersurfaces, and more precisely to patch 
together the three smooth components, which should all satisfy a given PDE under suitable coupled boundary conditions.

We begin by constructing  {\em approximate solutions} to our problem, 
consisting in mapping small standard double bubbles in Euclidean space into the 
manifold via the exponential map from a given point. This process will of course 
affect the CMC condition on each component, but yet an asymptotic expansion in terms 
of the ambient curvature and their orientation can be worked out. The next step consists in deforming the 
approximate solutions into the so-called {\em pseudo-double bubbles}. By this we 
mean a family of solutions to the given problem up to some Lagrange multipliers. 
These appear by the fact that standard double-bubbles are degenerate minima of the two-chambers 
isoperimetric problem, since they can be arbitrarily translated or rotated. As it was recently proved in 
\cite{dim} though, such a degeneracy is the minimal possible, which implies that one can solve the problem when, roughly,
one restricts to the orthogonal complement of Euclidean isometries within the space of variations. 
In working out this procedure, which relies on  the contraction mapping theorem, we need to 
couple normal variations to tangential ones, in order to keep the structure of the 
triple points at the common boundary. For this step in particular, a careful choice of the 
tangential components is needed in order to achieve given boundary conditions and norm estimates at the same time. We opt for imposing additional fictitious conditions on the tangential components, by prescribing their divergence, so that we are led to consider an \emph{overdetermined non-elliptic problem}; this is shown to admit a unique solution employing methods from Hodge theory, originated earlier in the study of problems from fluid dynamics, see \cite{sch,aco,gir}.

The last crucial step in the proof consists in adjusting the parameters - centering and orientation - 
when choosing  
approximate solutions in order to fully solve the problem, including the degenerate directions. 
For this goal we exploit the variational character of the problem, as in \cite{ambmal}, showing the 
existence of a {\em reduced energy functional} $\Phi_\rho$, defined on the unit tangent bundle of $M$, 
whose critical points  yield solutions to our problem. An accurate expansion of this 
quantity, that depends on the fixed point argument in the previous step and is defined 
for $(p,\s) \in UTM$, the unit tangent bundle of $M$, yields the following result for $\rho$ sufficiently small 
\begin{equation}\label{eq.approximateasym}
	\norm{\Phi_\rho(p,\s)-\Sc(p)A(m,H_0,H_1,H_2)-\Ric(\s,\s)B(m,H_0,H_1,H_2)}_{C^k(UTM)} \le c_k \rho,
\end{equation}
where $A(m,H_0,H_1,H_2)$, $B(m,H_0,H_1,H_2)$ are positive constants and $c_k > 0$ 
depends on the given order of regularity $k$.  By extremizing the above quantity with respect to the 
parameters $p$ and $\s$, we finally obtain Theorem \ref{th.nondeg}.

\

The above expansion \eqref{eq.approximateasym} allows to obtain 
multiplicity results as well, both assuming the existence 
of a non-degenerate critical point of the scalar curvature, or with no assumptions at all. 
Our next result is stated as follows, where $cat$ stands for the Lusternik-Schnirelman 
category, see e.g. Chapter 9 in \cite{ambmal2}.

\begin{theorem}\label{th.mainasymmetric}
Given a compact Riemannian manifold $(M^{m+1},G)$ and three  numbers $H_0,H_1,H_2>0$ such that $H_1=H_0+H_2$, there exists a constant $\rho_0>0$ such that the following holds. For any scale $\rho \in (0,\rho_0)$, there exist at least $cat(UTM)$ $m$-dimensional smooth double bubbles, whose sheets have constant mean curvatures $\rho^{-1}H_0$, $\rho^{-1}H_1$,  $\rho^{-1}H_2$ and 
mutually  meet at $120^{\circ}$-degrees with each other. 

Moreover, if $p$ and $\mu$ are as in Theorem \ref{th.nondeg} and if $H_1 \neq H_2$, there are  at least two CMC double-bubbles as above 
 oriented along a direction in the eigenspace of $Ric_p$  corresponding to the eigenvalue $\mu$.
\end{theorem}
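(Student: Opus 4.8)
The plan is to derive Theorem \ref{th.mainasymmetric} as a soft consequence of the expansion \eqref{eq.approximateasym}, using the fact that critical points of $\Phi_\rho$ on $UTM$ correspond to genuine CMC double bubbles of the required type, together with standard Lusternik-Schnirelman theory for functionals that are $C^1$-close to a model one. First I would fix $k=1$ in \eqref{eq.approximateasym} and introduce the limiting functional $\Phi_0(p,\s) := \Sc(p)\,A(m,H_0,H_1,H_2) + \Ric(\s,\s)\,B(m,H_0,H_1,H_2)$ on the compact manifold $UTM$. Since $A,B$ are positive constants, $\Phi_0$ is a smooth function on a closed manifold; by \eqref{eq.approximateasym} we have $\norm{\Phi_\rho - \Phi_0}_{C^1(UTM)} \le c_1\rho \to 0$ as $\rho \to 0$.

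The first main step is the abstract category argument. A standard result in Lusternik-Schnirelman theory (see e.g. Chapter 9 in \cite{ambmal2}) states that if $N$ is a compact manifold and $f \in C^1(N)$ is arbitrary, then $f$ has at least $cat(N)$ critical points; moreover this persists under small $C^1$-perturbations in the sense that any $g$ with $\norm{g-f}_{C^1(N)}$ small enough also has at least $cat(N)$ critical points, because the min-max levels $c_j(g) = \inf_{A \in \Gamma_j} \sup_{A} g$ (with $\Gamma_j$ the family of subsets of $N$ of category at least $j$) are continuous in $g$ and the deformation lemma applies on the compact manifold $N$ without any Palais-Smale condition. Applying this with $N = UTM$, $f = \Phi_0$, $g = \Phi_\rho$, and choosing $\rho_0$ so small that $c_1\rho_0$ is below the perturbation threshold, we conclude that $\Phi_\rho$ has at least $cat(UTM)$ critical points for every $\rho \in (0,\rho_0)$. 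Each such critical point yields, via the finite-dimensional reduction established in the earlier sections, a smooth double bubble whose three sheets have constant mean curvatures $\rho^{-1}H_0, \rho^{-1}H_1, \rho^{-1}H_2$ meeting at $120^\circ$; distinct critical points give geometrically distinct double bubbles (e.g. because they are localized near distinct points of $UTM$, or because the reduction map is injective), which gives the first assertion.

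For the second assertion I would argue locally near the prescribed $(p,\mu)$. Since $p$ is a non-degenerate critical point of $\Sc$, the function $q \mapsto \Sc(q)A$ has a non-degenerate critical point at $p$; restricting $\Phi_0$ to the fiber $U_pM \cong S^m$ over $p$, the function $\s \mapsto \Ric_p(\s,\s)B$ is (a positive multiple of) the restriction of a quadratic form to the sphere, whose critical points are exactly the unit eigenvectors of $\Ric_p$. Under the hypothesis $H_1 \neq H_2$ one checks from the explicit expressions that $B \neq 0$ (indeed $A,B>0$), so the eigenspace of $\Ric_p$ for the eigenvalue $\mu$ contributes a critical submanifold of $\Phi_0|_{U_pM}$; combined with the non-degeneracy in the base variable, a local min-max / linking construction in a tubular neighborhood $V$ of this critical set inside $UTM$, together with the $C^1$-closeness $\norm{\Phi_\rho - \Phi_0}_{C^1} \le c_1\rho$, produces at least two critical points of $\Phi_\rho$ inside $V$ — for instance via $cat$ of the projective-type set attached to the eigenspace direction, or, if $\mu$ is simple, directly from the two antipodal eigenvectors $\pm\s$ which are isolated non-degenerate critical points that survive the perturbation by the implicit function theorem. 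These critical points lie over directions converging to the eigenspace of $\mu$ as $\rho \to 0$, which is the stated orientation property.

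The main obstacle I anticipate is not the category count itself, which is essentially formal once \eqref{eq.approximateasym} is in hand, but rather making precise the correspondence ``critical point of $\Phi_\rho$ $\Rightarrow$ bona fide CMC double bubble'' and ensuring that the $C^1$-smallness in \eqref{eq.approximateasym} is genuinely uniform in $\rho$ up to order $k=1$ — this is where the delicate tangential-component estimates and the Hodge-theoretic solution of the overdetermined problem from the earlier steps must be invoked, and one must verify that the reduced functional is of class $C^1$ in $(p,\s)$ with the claimed error, not merely continuous. A secondary subtlety is the localization argument for the second assertion: one must check that the critical set of $\Phi_0$ associated to $(p,\mu)$ is isolated from the rest of the critical set (or at least can be separated by a neighborhood on which a relative min-max is well defined), which uses both the non-degeneracy of $p$ as a critical point of $\Sc$ and the hypothesis $H_1 \neq H_2$ to guarantee that the Ricci term actually breaks the degeneracy along the fibers.
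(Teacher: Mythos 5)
Your overall approach --- Lusternik–Schnirelman theory on $UTM$ for the first claim, localization near $(p,\mu)$ for the second --- matches the paper's, but you take an unnecessary detour for the first part and misinterpret the role of the hypothesis $H_1 \neq H_2$ in the second. For the count of $cat(UTM)$ solutions you do not need to introduce the limit functional $\Phi_0$ or argue about continuity of min-max levels under $C^1$-perturbation: on a closed manifold Lusternik–Schnirelman theory already gives at least $cat(UTM)$ critical points for \emph{any} $C^1$ function, so one applies it directly to $\Phi_\rho$ and converts critical points into CMC double bubbles by Proposition \ref{prop.criticalpointCMC}. This is exactly the paper's one-line argument. Your caveat about geometric distinctness is resolved, as you guess, by injectivity of $(p,\s) \mapsto \Sigma_{(p,\s),\rho}^{\flat}$ in the asymmetric regime.

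The more substantive issue is your explanation of $H_1 \neq H_2$. You claim it is needed ``to guarantee that the Ricci term actually breaks the degeneracy along the fibers,'' but the coefficients $A,B$ in \eqref{eq.approximateasym} are strictly positive for every admissible triple, so the Ricci term is present regardless of $H_1,H_2$; in fact, since $H_0>0$ and $H_1=H_0+H_2$, the inequality $H_1\neq H_2$ holds automatically under the stated hypotheses. Its actual role is purely geometric, not variational: in the symmetric case one has $\Sigma_{(p,\s),\rho}^{\flat} = \Sigma_{(p,-\s),\rho}^{\flat}$ (cf.\ \eqref{eq.symmetry-Phi-rho}), so the two antipodal critical points in the fiber direction may represent the \emph{same} double bubble. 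When $H_1 \neq H_2$ the parametrization by $(p,\s)$ is injective, so the two critical points produced from $cat(\Lambda_\rho(S_p))=cat(S^{n-1})=2$ (via the embedding $\Lambda_\rho$ constructed in the proof of Theorem \ref{th.nondeg}) yield two genuinely distinct configurations; that is what the paper means by ``avoids the possible geometric equivalence \dots for antipodal critical points in the symmetric case.'' In the simple-eigenvalue branch of your argument this is precisely the point you must invoke: the implicit function theorem gives you two critical points at $\pm\s$, and you need $H_1\neq H_2$ to conclude these are two different double bubbles rather than one. For the multiple-eigenvalue case, the paper's use of $\Lambda_\rho$ and the category of a sphere is sharper and more concrete than the ``local min-max / linking'' scheme you sketch, though your idea goes in the same direction.
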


\begin{remark}
	In the papers \cite{renwei} and \cite{renwei2} the existence of configurations in the form of double bubbles 
	minimizing a free energy in a ternary system was proved in two-dimensional domains, even though 
	a precise characterization of minimizers is not given in terms of their location and orientation. 
	Our expansions, which hold in general dimension, also can determine such properties in generic cases. 
\end{remark}

Other multiplicity results can be found when the equalities $H_0=0$ and $H_1=H_2$ are imposed. Under these conditions, we obtain
\begin{equation}\label{eq.symmetry-Phi-rho}
	  \Phi_\rho(p,\s) = \Phi_\rho(p,-\s) \qquad \hbox{ for all } p \in M \hbox{ and } \s \in UT_p M, 
\end{equation}
which means that $\Phi_\rho$ is well-defined on the {\em projective} tangent bundle of $M$, denoted by $\mathbb{P} TM$, where opposite points in $UTM$ with the same base are identified. Moreover, the following expansion holds
\begin{equation}\label{eq.approximatesym}
	\norm{\Phi_\rho(p,\s)-\Sc(p)A^{sym}(m)-\Ric(\s,\s)B^{sym}(m)}_{C^k(\mathbb{P}TM)} \le c_k \rho^2,
\end{equation}
for some positive constants $A^{sym}(m)$, $B^{sym}(m)$ and $c_k$. Altogether,
by some general variational principles, see e.g. Chapter 10 in \cite{ambmal2}, symmetries lead to 
the existence of further critical points.

\begin{theorem}\label{th.mainsymmetric}
Given a compact Riemannian manifold $(M^{m+1},G)$ and a positive number $H>0$, there exists a constant $\rho_1>0$ such that the following holds. For any scale $\rho \in (0,\rho_1)$, there exist at least $cat(\mathbb{P} TM)$ symmetric double bubbles, whose sheets have constant mean curvatures $0$ and $\rho^{-1}H$, and 
whose sheets meet pairwise at $120^{\circ}$-degrees. 

If $p$ and $\mu$ are as in Theorem \ref{th.nondeg}, with $\mu$ of multiplicity $j > 1$, there are at least $j$ CMC double-bubbles as above 
oriented along a direction in the eigenspace of $Ric_p$  corresponding to the eigenvalue $\mu$. 
\end{theorem}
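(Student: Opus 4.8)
The plan is to deduce Theorem~\ref{th.mainsymmetric} from the expansion \eqref{eq.approximatesym} of the reduced energy $\Phi_\rho$ on the projective tangent bundle $\mathbb{P}TM$, exactly as Theorem~\ref{th.mainasymmetric} is deduced from \eqref{eq.approximateasym} on $UTM$. The key structural input has already been established upstream: when $H_0=0$ and $H_1=H_2=H$ the symmetry \eqref{eq.symmetry-Phi-rho} holds, so $\Phi_\rho$ descends to a $C^k$ function on the compact manifold $\mathbb{P}TM$, and \eqref{eq.approximatesym} writes it as a fixed leading-order profile $\Psi(p,\s) := \Sc(p)A^{sym}(m)+\Ric(\s,\s)B^{sym}(m)$ plus an error of size $O(\rho^2)$ in $C^k$. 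The mechanism converting critical points of $\Phi_\rho$ into the desired CMC double bubbles is the finite-dimensional reduction set up earlier, so the entire argument is a soft topological one once the expansion is in hand.

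First I would record the two elementary facts needed: (i) any critical point of $\Phi_\rho : \mathbb{P}TM \to \R$ produces a smooth symmetric double bubble with sheets of mean curvature $0$ and $\rho^{-1}H$ meeting at $120^\circ$, centered near its base point and aligned along its direction — this is the content of the reduction; and (ii) $\mathbb{P}TM$ is a closed manifold, so the Lusternik--Schnirelman category $\mathrm{cat}(\mathbb{P}TM)$ is finite and positive, and any $C^1$ function on a closed manifold has at least $\mathrm{cat}$ critical points (Chapter~9 of \cite{ambmal2}). The first assertion of the theorem then follows immediately by applying (ii) to $\Phi_\rho$ for $\rho<\rho_1$ and invoking (i) on each of the $\mathrm{cat}(\mathbb{P}TM)$ critical points. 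No smallness of $\rho$ beyond what makes the reduction valid and \eqref{eq.approximatesym} hold is required here, since the existence of $\mathrm{cat}$ critical points is robust under arbitrary $C^1$-small perturbations of the function.

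For the second assertion I would use the local structure near the given non-degenerate critical point $p$ of $\Sc$ together with the eigenvalue $\mu$ of $\Ric_p$ of multiplicity $j>1$. Restricting $\Psi$ to the fibre $\mathbb{P}T_pM \cong \mathbb{R}P^m$, the term $\Sc(p)A^{sym}(m)$ is constant and the term $B^{sym}(m)\Ric_p(\s,\s)$ is, up to the positive constant $B^{sym}(m)$, the Rayleigh quotient of the symmetric bilinear form $\Ric_p$, which on $\mathbb{R}P^m$ attains the value $\mu$ exactly on the projectivization of the eigenspace $E_\mu$, a copy of $\mathbb{R}P^{j-1}$. A standard Lusternik--Schnirelman / min-max argument on $\mathbb{R}P^{j-1}$ (whose category is $j$) produces $j$ critical points of this restricted profile; to upgrade this to genuine critical points of $\Phi_\rho$ on the full $\mathbb{P}TM$ lying near $p$ and near the directions in $\mathbb{P}E_\mu$, I would combine the non-degeneracy of $p$ as a critical point of $\Sc$ (which makes the $p$-variable behave like a non-degenerate direction, trapping the base point near $p$ by an implicit-function or Lyapunov--Schmidt step), with a perturbative min-max over the $j$-dimensional-category set $\mathbb{P}E_\mu$ of directions, the $O(\rho^2)$ error being negligible against the fixed gap between $\mu$ and the other eigenvalues of $\Ric_p$. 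This yields at least $j$ distinct CMC double bubbles oriented along $E_\mu$, as claimed.

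The main obstacle is the second assertion: one must show that the $j$ critical directions survive as critical points of the full reduced functional $\Phi_\rho$ rather than merely of its fibrewise leading-order restriction. The delicate point is decoupling the base variable $p$ from the direction variable $\s$ — the non-degeneracy of the critical point of $\Sc$ handles the former, but one must verify that the mixed second derivatives and the $\rho$-dependent remainder do not destroy the min-max classes on $\mathbb{P}E_\mu \cong \mathbb{R}P^{j-1}$; this is where one invokes the fixed spectral gap of $\Ric_p$ and the smallness of $\rho$, applying a perturbed Lusternik--Schnirelman argument (or the general variational principle of Chapter~10 of \cite{ambmal2}) on a neighbourhood of $\{p\}\times\mathbb{P}E_\mu$ in $\mathbb{P}TM$ that is invariant, up to $O(\rho^2)$, under the gradient flow of $\Phi_\rho$. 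Everything else — finiteness and positivity of $\mathrm{cat}(\mathbb{P}TM)$, the passage from critical points to double bubbles, the $120^\circ$ angle conditions — is inherited verbatim from the constructions leading to Theorems~\ref{th.nondeg} and~\ref{th.mainasymmetric}.
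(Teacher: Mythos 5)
Your proposal is essentially the same as the paper's argument: use the reduced functional $\Phi_\rho$ on $\mathbb{P}TM$ together with Proposition~\ref{prop.criticalpointCMC} and Lusternik--Schnirelman theory for the first assertion, then restrict to the pseudo-critical manifold built from the eigenspace $E_\mu$ over the non-degenerate critical point $p$ of $\Sc$ and apply an equivariant / Krasnoselski-type multiplicity count on $\mathbb{RP}^{j-1}$ for the second.

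The one thing you gloss over that the paper treats as part of this proof: the expansion \eqref{eq.approximatesym} and the symmetry \eqref{eq.symmetry-Phi-rho} are not handed to you for free -- they are established inside the proof of Theorem~\ref{th.mainsymmetric}. The symmetry, in particular, is not automatic from $H_0=0$, $H_1=H_2$: you must argue that the pseudo-bubble attached to $(p,-\s)$ is the antipodal image of the one attached to $(p,\s)$, which follows from the uniqueness of the fixed point in the construction of Section~\ref{sec.pseudodb}; and the $O(\rho^2)$ (rather than merely $O(\rho)$) error rate relies on the symmetry cancellations of Remarks~\ref{rem.volumesymmetricbetter} and~\ref{rem.areasymmetricbetter} plus the observation that $\rho^2 L(w,Y)$ and $Q^{(2)}(w,Y)$ are $O(\rho^4)$. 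Also, in the second half you speak of $j$ critical points of the restricted leading-order profile, but on $\mathbb{P}E_\mu$ that profile is identically the constant $\Sc(p)A^{sym}+\mu B^{sym}$; the multiplicity really comes from applying Krasnoselski's genus (or LS theory on the quotient $\mathbb{RP}^{j-1}$) to the genuine restricted functional $\Phi_\rho|_{\Lambda_\rho(S_p)}$, whose $\mathbb{Z}_2$-equivariance is exactly what the symmetry \eqref{eq.symmetry-Phi-rho} supplies. With those points made explicit your sketch coincides with the paper's proof, reusing the manifold $\Lambda_\rho(S_p)$ already constructed in the proof of Theorem~\ref{th.nondeg}.
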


We remark that since $UTM$, $\mathbb{P}TM$ are fiber bundles with topologically non-trivial fibers, they might have a greater Lusternik-Schnirelman category than $M$, and in any case not smaller than that of $M$.

\begin{remark}\label{r:ganea}
	For {\em parallelizable manifolds} $M$, which is always the case in three dimensions by Stiefel's theorem (see \cite{mil}),  the tangent bundle is trivial 
	and therefore $UTM \simeq M \times S^{m}$ and $\mathbb{P}TM \simeq M \times \mathbb{RP}^m$. 
	For such manifolds 
	 {\em Ganea's conjecture} from 1971 stated that products of the type $ M \times S^m$ have category larger than 
	 that of $M$. While this fact is true when the dimension of $M$ is no more than five (see \cite{vandem}), 
	 it was in general disproved  in \cite{iwase}, with a minimum-dimensional example in \cite{stanley}. 
	 
	 In any case it is easy to see that $cat(\mathbb{P}TM) \geq cat(UTM) \geq cat(M)$. 
\end{remark}

\

The plan of the paper is the following. In Section \ref{sec.preliminary} we collect some 
known properties of Euclidean standard double bubbles, while in Section \ref{sec.geodesic} 
we perform asymptotic expansions of areas and volumes of their exponential maps 
over the manifold $M$.  Section \ref{sec.perturbed} is devoted to the perturbation, 
via normal and tangential variations, of the above surfaces and to their effects on 
the variations of first and second fundamental forms, as well as of their mean curvature. 
In Section \ref{sec.pseudodb} we construct {\em pseudo-bubbles} via a fixed point 
argument, while finally in Section \ref{sec.existence} we work out the variational 
strategy to find existence and multiplicity of critical points, relying also on the 
expansion of the reduced functional $\Phi_\rho$.

\

\begin{center}
	Acknowledgments
\end{center}

AM has been supported by the project {\em Geometric problems with loss of compactness} from the Scuola Normale Superiore. He is also member of GNAMPA as part of INdAM. The authors are grateful to Frank Morgan for some useful comments.

\section{Preliminary Results and  Notation}\label{sec.preliminary}
In this section we recall a few ingredients from the theory of double bubbles, with particular emphasis on the Euclidean space case, which will be extensively used throughout the paper.

Following \cite{hut}, we define a \emph{double bubble} in an ambient Riemannian manifold $(M^{m+1},G)$ as the union of the topological boundaries of two disjoint regions $B_1$ and $B_2$ with respective volumes $V_1$ and $V_2$. We will always assume $V_1 \le V_2$, and call a double bubble \emph{symmetric} if $V_1=V_2$ and \emph{asymmetric} if $V_1<V_2$. A piecewise smooth hypersurface $\Sigma \subset M$ is a \emph{smooth double bubble} if it consists of three compact, $m-$dimensional, smooth and orientable pieces $\Sigma^0$, $\Sigma^1$ and $\Sigma^2$, meeting along a common $(m-1)-$dimensional boundary $\Gamma$, and such that $\Sigma^1 \cup \Sigma^0=\partial B_1$ and $\Sigma^2 \cup \Sigma^0=\partial B_2$. The unit normal vector field along $\Sigma$, denoted by $N$, is chosen so that it points into $B_1$ along $\partial B_1$, and into $B_2$ along $\Sigma^2$. The associated second fundamental form and mean curvature, with our 
convention the trace of the second fundamental form,  are denoted respectively by $h$ and $H$. At points in $\Gamma$ these objects are not uniquely defined, but their value depends on the sheet used to compute them instead; we will therefore denote $N^{\sigma}$, $h^{\sigma}$ and $H^{\sigma}$ their restrictions to the sheet $\Sigma^{\sigma}$, where $\sigma=0,1,2$.

Let us now focus on the case in which the ambient manifold is the Euclidean space $\R^{m+1}$.
We call a smooth double bubble $\Sigma$ a \emph{standard double bubble} if $\Sigma^{\sigma}$ is a spherical cap for $\sigma=1$, or $2$, and  $\Sigma^0$ is a spherical cap or a flat ball in the asymmetric and symmetric case respectively, and the sheets $\Sigma^\sigma$ meet in an equi-angular way (that is at $120^\circ$-degrees) along an $(m-1)-$dimensional sphere $\Gamma$. According to our convention on the unitary normal vectors $N^\sigma$, we must have $N^1=N^0+N^2$. For any point $p\in \Gamma$, we can decompose the tangent space $T_p \Sigma^{\sigma}$ orthogonally as $T_p \Sigma^{\sigma}=T_p \Gamma \bigoplus \Span(\nu^\sigma)$ where $\nu^\sigma \in T_p \Sigma^{\sigma}$ is the unitary normal vector to $\Gamma$ at $p$ pointing inward $\Sigma^\sigma$. The condition on the sheets meeting in an equi-angular way can now be written as $\nu^0+\nu^1+\nu^2=0$.

We will call a standard double bubble $\Sigma$ \emph{centered} if $\Gamma$ is centered at the origin $0 \in \R^{m+1}$. We will denote by $\s$ the unitary normal vector to the hyper-plane containing $\Gamma$ and pointing towards $B_1$, and say that $\Sigma$ is \emph{aligned along} $\s$. When calling $\s$ the \emph{symmetry axis} of $\Sigma$, we are identifying $\s$ with its linear span; clearly, $\Sigma$ is rotationally symmetric along its symmetry axis $\s$. A centered standard double bubble is uniquely determined by its alignment vector $\s$ and the two volumes $V_1$ and $V_2$, or the radii $R_\sigma$ for $\sigma=0,1,2$. 

\begin{figure}[h]\label{fig.doublebubbles}
	\begin{minipage}[l]{0.48\linewidth}
	\hspace{-3.5cm}
\includegraphics[scale=0.45]{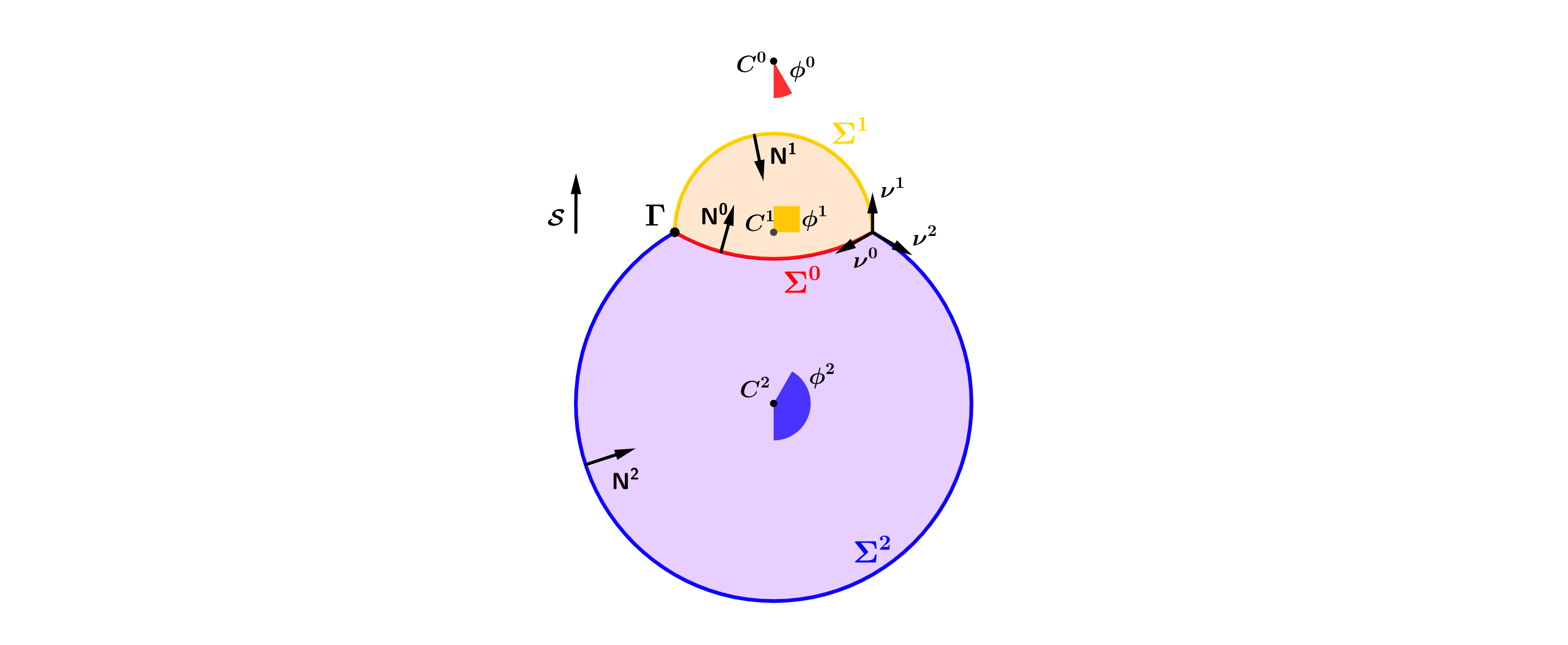}
\end{minipage} \hspace{1cm}
\begin{minipage}[r]{0.3\linewidth}
	\hspace{-2.5cm}
\includegraphics[scale=1.4]{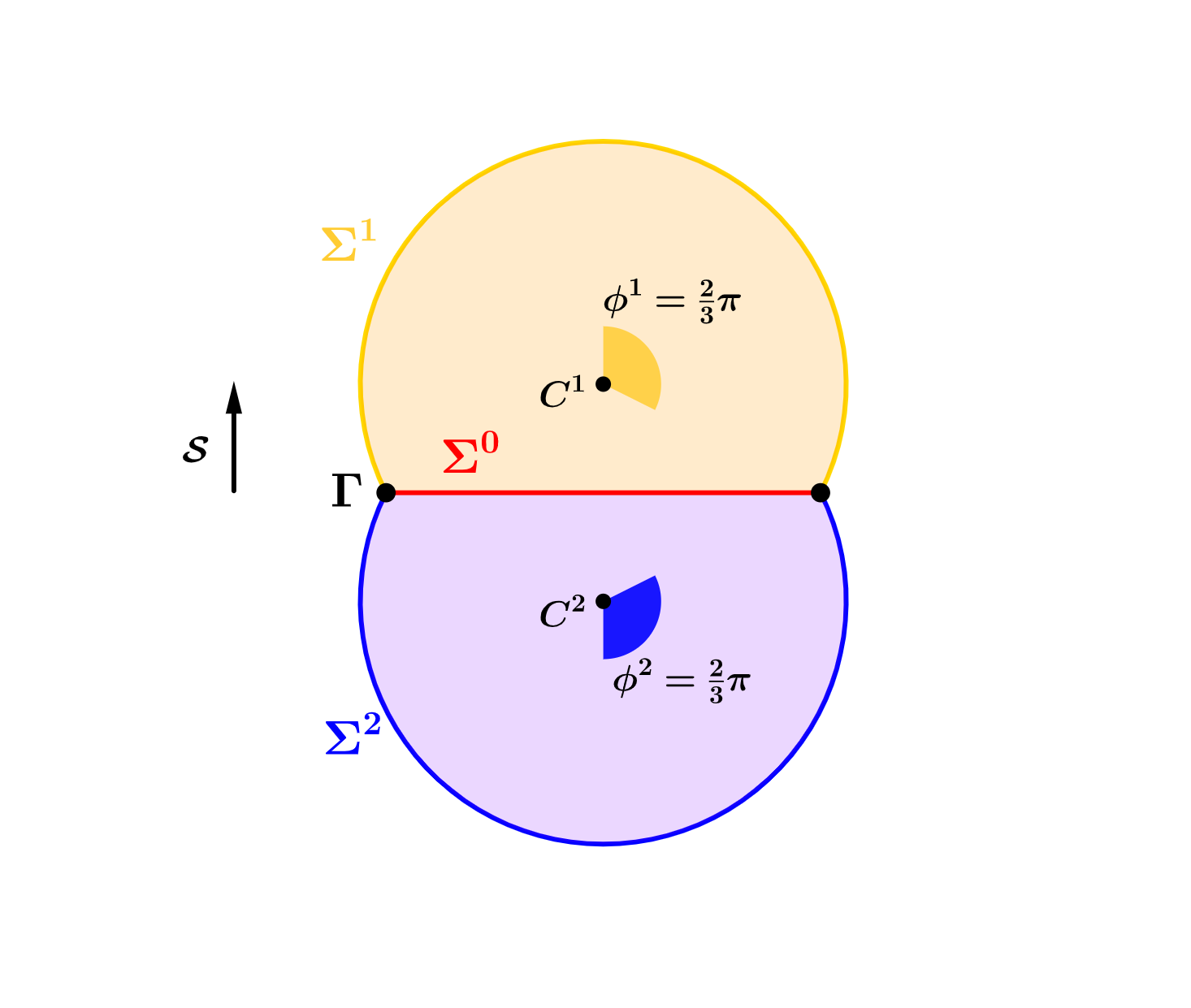} 
\end{minipage}
	\caption{A standard double bubble enclosing the regions $B_1$ (orange) and $B_2$ (violet) and a symmetric one}
\end{figure}

Consider now a centered asymmetric standard double bubble $\Sigma$ with symmetry axis $\s$. Let $\phi^\sigma$ be the angle between the pole of the cap of sphere $\Sigma^{\sigma}$ and $\Gamma$, and $R_{\sigma}$ be its radius $\Sigma^{\sigma} \subset S(C^\sigma,R_\sigma)$. Then, as it can be easily seen from volume-constrained variations, the (constant mean) curvatures $H_\sigma=m (R_\sigma)^{-1}$ of the different pieces verify the balance equation $H_1=H_0+H_2$. We adopt the convention that the mean curvature of a hyper-surface is given by the trace of its second fundamental form. Moreover, we can write the centers $C^\sigma$ as $C^\sigma=-R_\sigma \cos(\phi^\sigma) \s$, and we have the geometric equations
\begin{equation}
R_0\sin(\phi^0)=R_1\sin(\phi^1)=R_2\sin(\phi^2), \quad \phi^0+\phi^1=\tfrac{2}{3}\pi, \quad \phi^1+\phi^2=\tfrac{4}{3}\pi, \quad \phi^2+\phi^0=\tfrac{2}{3}\pi.
\end{equation}
For a symmetric standard double bubble the situation becomes even easier. Define $\phi^\sigma$ and $R_\sigma$ as above for $\sigma=1,2$, whereas define $R_0$ to be the radius of the $m-$dimensional ball $\Sigma^0$. Then $R_1=R_2=:R$, $R_0=\tfrac{\sqrt{3}}{2}R$, the mean curvatures satisfy the balance equations $H_1=H_2=m R^{-1}$ and $H_0=0$, the centers can be written as $C^1=-C^2=\tfrac{R}{2}\s$ and finally we obtain the geometric equation $\phi^1=\phi^2=\tfrac{2}{3}\pi$.

Finally, we denote by $g_{\sigma}$ and $h_\sigma$ respectively the metric and the second fundamental form on $\Sigma^\sigma$ induced by the embedding. Note that $h$ is either null or the constant  multiple of the identity $R^{-1} \cdot Id$.

\section{Normal Coordinates and Geodesic Double Bubbles}\label{sec.geodesic}
In this section we introduce the concept of geodesic double bubble in a Riemannian manifold, inspired by the standard definition of geodesic ball, and then derive some expansions for its area and enclosed volumes.
In order to do so, we fix a point $p$ in an $(m+1)-$dimensional Riemannian manifold $(M,G)$, and introduce normal coordinates on a neighbourhood $U$ of $p$: we consider an orthonormal basis $\{ E_\mu \}$, $\mu = 1,...,m+1$, of $T_p M$, and set $F \colon \mathbb{R}^{m+1} \longrightarrow M$ by $F(x) \coloneqq \Esp(x^\mu E_\mu)$, where $\Esp$ is the exponential map of $(M,G)$ at $p$ and summation on repeated indices is understood. This choice  induces coordinate vector fields $X_\mu = 
F_*(\partial_{x^\mu})$. Notice that $G_{\mu,\nu} \coloneqq G(X_\mu,X_\nu)=\delta_{\mu,\nu}$ at $p$, thus we will always endow $T_p M$ with the Euclidean scalar product, denoted by $\scal{\cdot}{\cdot}$. The importance of these coordinates relies in the following classical result (see \cite{wil}).
\begin{proposition}\label{prop.metricnormalcoord}
At the point $q = F(x)$, the metric the following expansion holds for  $\mu,\nu = 1, . . . , m + 1$
\begin{equation}\label{eq.metricnormalcoord}
G(X_\mu,X_\nu) =\delta_{\mu,\nu} + \tfrac{1}{3}\scal{\riem{\Xi}{E_\mu}{\Xi}}{E_\nu} + \tfrac{1}{6} \scal{\nabla_\Xi \riem{\Xi}{E_\mu}{\Xi}}{E_\nu} + \mathcal{O}(|x|^4), 
\end{equation}
where $R_p$ is the Riemann curvature tensor at the point $p$, and $\Xi = x^\mu E_\mu \in T_p M$.
\end{proposition}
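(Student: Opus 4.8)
The plan is to derive the expansion \eqref{eq.metricnormalcoord} directly from the variation-of-Jacobi-fields description of the exponential map, following the classical approach (see \cite{wil}). First I would fix the point $q = F(x)$ and write $\Xi = x^\mu E_\mu \in T_pM$, so that the radial geodesic $t \mapsto \gamma(t) := \Esp(t\Xi)$ joins $p$ to $q$ at $t=1$. For a fixed tangent vector $W \in T_pM$, the coordinate vector field $X_\mu$ evaluated along $\gamma$ is, up to the canonical identifications, the Jacobi field $J_W(t)$ along $\gamma$ with $J_W(0) = 0$ and $(\nabla_t J_W)(0) = W$; concretely $X_\mu|_{\gamma(t)} = \tfrac{1}{t} J_{E_\mu}(t)$ when we parametrize by $\Xi$. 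Thus computing $G(X_\mu, X_\nu)$ at $q$ amounts to computing $\langle J_{E_\mu}(1), J_{E_\nu}(1)\rangle$, and the whole problem reduces to a Taylor expansion in $t$ of the inner product of two Jacobi fields along a geodesic emanating from $p$.

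The key computation is then the Taylor expansion of a Jacobi field. Writing the Jacobi equation $\nabla_t^2 J + R(J,\dot\gamma)\dot\gamma = 0$ and differentiating repeatedly at $t=0$, one gets, using $J(0)=0$, $\nabla_t J(0) = W$, $\dot\gamma(0) = \Xi$:
\begin{equation*}
J_W(t) = tW - \tfrac{t^3}{6}\, \riem{W}{\Xi}{\Xi}\, - \tfrac{t^4}{12}\, \nabla_\Xi \riem{W}{\Xi}{\Xi}\, + \mathcal{O}(t^5),
\end{equation*}
where all curvature terms are evaluated at $p$ and parallel-transported (to leading order one may ignore the transport since it only affects higher orders). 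Here I am implicitly using the symmetry $\riem{\Xi}{W}{\Xi} = \riem{W}{\Xi}{\Xi}$ up to sign conventions; I would be careful to match the sign convention fixed by the macro \texttt{\textbackslash riem} in the paper. Taking the inner product of $J_{E_\mu}$ and $J_{E_\nu}$, setting $t=1$, and dividing by $t^2=1$ (equivalently reading off $X_\mu = t^{-1}J_{E_\mu}$), the $\langle tW, tW'\rangle$ term gives $\delta_{\mu\nu}$, the cross terms at order $t^4$ give $-\tfrac{1}{6}\langle E_\mu, \riem{E_\nu}{\Xi}{\Xi}\rangle - \tfrac16 \langle \riem{E_\mu}{\Xi}{\Xi}, E_\nu\rangle$; by the pair symmetry of the curvature tensor these combine into $\tfrac13 \scal{\riem{\Xi}{E_\mu}{\Xi}}{E_\nu}$ after reordering slots. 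The order-$t^5$ cross term similarly produces the covariant-derivative term $\tfrac16 \scal{\nabla_\Xi \riem{\Xi}{E_\mu}{\Xi}}{E_\nu}$, again invoking the symmetries of $\nabla R$ and the second Bianchi identity to symmetrize the two contributions; everything else is absorbed into $\mathcal{O}(|x|^4)$ since $|x| = |\Xi|$ scales the radial parameter.

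The main obstacle, such as it is, is purely bookkeeping: keeping track of the curvature-tensor symmetries and the chosen sign convention so that the $\tfrac13$ and $\tfrac16$ coefficients come out exactly as stated, and verifying that the terms dropped into $\mathcal{O}(|x|^4)$ genuinely are of that order (in particular that the parallel transport of the curvature terms along $\gamma$, and the difference between $\Xi$ and its transport, contribute only at order $|x|^4$ or higher). There is no analytic difficulty: the Jacobi equation is an ODE with smooth coefficients, so the Taylor expansion to the required order is automatic, and the remainder estimate is uniform for $q$ in a fixed normal neighborhood $U$ of $p$. I would therefore present the proof as: (i) identify $X_\mu$ with a rescaled Jacobi field, (ii) Taylor-expand the Jacobi field via the Jacobi equation, (iii) take inner products and symmetrize using the curvature identities, and (iv) check the remainder is $\mathcal{O}(|x|^4)$, referring to \cite{wil} for the classical details.
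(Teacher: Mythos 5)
The paper does not actually prove this proposition; it states it as a classical fact and cites \cite{wil}. Your Jacobi-field argument is exactly the standard textbook derivation and is essentially correct: identifying $X_\mu|_{\gamma(t)}$ with $t^{-1}J_{E_\mu}(t)$ where $J_{E_\mu}$ solves the Jacobi equation with $J(0)=0$, $\nabla_t J(0)=E_\mu$; Taylor-expanding via the Jacobi equation to get $J(t)=tW-\tfrac{t^3}{6}\riem{W}{\Xi}{\Xi}-\tfrac{t^4}{12}\nabla_\Xi\riem{W}{\Xi}{\Xi}+\mathcal O(t^5)$; pairing; and symmetrizing using $\scal{\riem{E_\mu}{\Xi}{\Xi}}{E_\nu}=-\scal{\riem{\Xi}{E_\mu}{\Xi}}{E_\nu}$ together with pair symmetry indeed produces the coefficients $\tfrac13$ and $\tfrac16$. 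Two small remarks of exposition: (i) the second Bianchi identity is not needed for the cubic term — the \emph{algebraic} symmetries of $\nabla_\Xi R$, inherited from those of $R$ since $\nabla g=0$, already let you combine the two cross terms; and (ii) there is no error term coming from parallel transport: the covariant Taylor expansion gives coefficients that are parallel along $\gamma$, and parallel transport preserves $G$ exactly, so the inner product computation is exact order-by-order and the entire remainder sits in the $\mathcal O(|x|^4)$ coming from truncating the Jacobi expansion.
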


We adopt the following convention on $\mathcal{O}(\cdot)$: for a real variable $t \in [0,1)$ and a natural number $N \in \mathbb{N}$, $\mathcal{O}(t^N)$ denotes a smooth function on $M \times (0,1)$, depending only on the geometry of the ambient manifold $M$, such that for every $j,k \in \mathbb{N}$ we have the following bound on the function and its derivatives
\begin{equation}
\left| \frac{d}{dt^j} \nabla^{(k)} \mathcal{O}(t^N) \right| (t,\cdot) \le C t^{N-j} \text{ on $M$,}
\end{equation}
for some constant $C>0$. In the following, we will always absorb $\mathcal{O}(t^M)$ in $\mathcal{O}(t^N)$ if $M \ge N$, which is coherent to our notation since $t <1$.

We identify the tangent space $T_p M$ with $\R^{m+1}$ through the linear isometry $\iota \colon \mathbb{R}^{m+1} \longrightarrow T_p M$ which sends $\partial_{x^\mu}$ to $E_{\mu}$; this expansion for the metric allows one to compute expansions for several geometric quantities of objects inside the normal neighbourhood $U$, and will therefore play a crucial role throughout the paper.
In the following, we will regularly identify points $Q$ in the unitary tangent bundle $UTM$ with their natural projections $(q,\s'):=(\pi_M (Q),\pi_{T_{\pi_M(Q)} M}(Q))$, so that $Q=(q,\s') \in M \times T M$. Moreover, given a centered standard double bubble $\Sigma \subset \R^{m+1}$ aligned along a vector $\s$, we identify it with its image $\Sigma \subset T_p M$ through the map $\iota$, so that $\s$ is identified with a vector $\s \in T_p M$.
\begin{definition}[Geodesic Double Bubbles]\label{def.geodesicdb}
For any point $(p,\s)\in UTM$, volumes $V_1\le V_2$ and a small enough scale $\rho>0$, we define the \emph{geodesic double bubble centered at $p$, aligned along $\s$ and at scale $\rho$}, as the smooth double bubble $\Sigma_{(p,\s),\rho}:=\Esp(\rho \Sigma)$, where $\Sigma \subset \R^{m+1}$ is the unique centered standard double bubble aligned along $\s$, enclosing volumes $V_1$ and $V_2$, identified as above with its image through the map $\iota$. We call a geodesic double bubble \emph{symmetric} or \emph{asymmetric} accordingly to whether $\Sigma$ is symmetric or asymmetric.
\end{definition}
It is clear from Definition \ref{def.geodesicdb} that if a geodesic double bubble $\Sigma_{(p,\s),\rho}$ is symmetric, then we must have $\Sigma_{(p,\s),\rho}=\Sigma_{(p,-\s),\rho}$, and hence the set of symmetric geodesic double bubbles at a fixed scale $\rho$ can be parametrized by points in the projective unitary tangent bundle $\mathbb{P}TM$, compare with Theorem \ref{th.mainsymmetric} in the introduction.  We remark that we could have fixed the mean curvatures $H_\sigma$'s of the sheets instead of the two enclosed volumes $V_1$ and $V_2$. Hence, from now on, we will assume that $\rho<\rho_1(M,H_0,H_1,H_2)$ is small enough so that $\rho \Sigma$ is contained in a normal neighbourhood. 
\subsection{Volumes of Geodesic Double Bubbles}\label{sub.volume}
Using the expansion \eqref{eq.metricnormalcoord} of the metric $G$ in normal coordinates, we can compute the volumes $(V_1)_{(p,\s),\rho}$ and $(V_2)_{(p,\s),\rho}$ enclosed by a geodesic double bubble $\Sigma_{(p,\s),\rho}$ as functions of the volumes $V_1$ and $V_2$ enclosed by $\Sigma \subset T_pM$ as in Definition \ref{def.geodesicdb}, the point $(p,\s)\in UTM$ and the scale $\rho$. Let us denote by $P^\sigma$ the volume enclosed by the spherical cap $\Sigma^\sigma$ and the $m$-dimensional ball $D:=co(\Gamma) \subset T_p M$, that is the convex hull of $\Gamma$; analogously, $P^\sigma_{(p,\s),\rho}$ is the region enclosed by $\Sigma_{(p,\s),\rho}^\sigma$ and the image of the disk $\rho \cdot \Esp(D)$ through the exponential map. Therefore, the enclosed volumes  $V_1$ and $V_2$ (respectively $(V_1)_{(p,\s),\rho}$ and $(V_2)_{(p,\s),\rho}$) verify
\begin{align}\label{eq.volumedecomposition}
V_1=|P^1|+|P^0|, &\quad V_2=|P^2|-|P^0|,\\
(V_1)_{(p,\s),\rho}=\vol(P^1_{(p,\s),\rho})+\vol(P^0_{(p,\s),\rho}), &\quad (V_2)_{(p,\s),\rho}=\vol(P^2_{(p,\s),\rho})-\vol(P^0_{(p,\s),\rho}).\nonumber
\end{align}
Here and elsewhere we denote by $|\cdot|_k$ and $\text{Vol}_k$ the $k$-dimensional Hausdorff measure of a set in $\R^{m+1}$ and $(M,G)$ respectively.
These formulas remain true in the symmetric case $V_1=V_2$, where $D=\Sigma^0$, once noticed that $|P^0|_{m+1}=\vol(P^0_{(p,\s),\rho})=0$. From \eqref{eq.metricnormalcoord}, one deduces the following expansion of the volume form in normal coordinates
\begin{equation}
d\mu_G=\sqrt{G} dx=\Big(1-\tfrac{1}{6} \Ric_{\mu \nu}x^\mu x^\nu +\mathcal{O}(|x|^3) \Big) dx.
\end{equation}
Here $\Ric$ denotes the Ricci tensor of the ambient metric $G$, and $\sqrt{G}$ the square root of the determinant of the matrix $G_{\mu,\nu}$. We will use this expansion to compute the volumes defined above through the formula 
\begin{equation}
\vol(P^\sigma_{(p,\s),\rho})=\int_{\rho P^\sigma}{\sqrt{G} dx}.
\end{equation}
Let us focus on the case $\sigma=1$, since the other cases are similar.
Extend $\s$ to an orthonormal basis, so that the matrix $A$ with columns $A:=(v_1|v_2|...|v_m|\s)\in \SO (m+1)$. Next we set $\bar{B}^1:=A^{-1}(B^1-C^1)$, so that $\bar{P}^1$ is the region enclosed by a spherical cap centered at the origin, with radius $R_1$, opening angle $\phi^1$, symmetry axis $e_{m+1}=(0,0,...,0,1)$ and pole $R_1 e_{m+1} \in \bar{P}^1$. Using the change of variables $y\coloneqq A^{-1}(\tfrac{x}{\rho}-C^1)$ we get
\begin{align}
\rho^{-(m+1)}\vol(P^1_{(p,\s),\rho})&=\int_{\bar{P}^1}{1-\tfrac{\rho^2}{6} \Ric(Ay+C^1,Ay+C^1) dy}+\mathcal{O}(\rho^3)\nonumber\\
&=\Big( 1-\tfrac{\rho^2}{6} \Ric(C^1,C^1) \Big) |\bar{P}^1|-\tfrac{\rho^2}{6}\int_{\bar{P}^1}{T_{\mu \nu}y^\mu y^\nu dy}-\tfrac{\rho^2}{3}\int_{\bar{P}^1}{W_\mu y^\mu dy}+\mathcal{O}(\rho^3),\label{eq.expansionP1}
\end{align}
where $T=A^T \Ric A$ and $W_\mu=(A^T \Ric)_{\mu \nu}(C^1)^\nu=(\Ric A)_{\mu \nu}(C^1)^\nu$. Using the symmetry of $\bar{P}^1$, and integrating over the level sets of the map $x \mapsto x^{m+1}$, we have
\begin{equation}
\begin{aligned}
\int_{\bar{P}^1}{y^\mu dy}&=0 \ \ \forall \mu=1,...,m; \\
\int_{\bar{P}^1}{y^{m+1} dy}&=\int_{0}^{\phi^1}{\omega_m (R_1 \sin(\phi))^{m+1}R_1 \cos(\phi) d\phi}=\omega_m R_1^{m+2}\tfrac{\sin^{m+2}(\phi^1)}{m+2}.
\end{aligned}
\end{equation}
Hence, we deduce
\begin{equation}\label{eq.linearP1}
\resizebox{0.93\hsize}{!}{ $
-\tfrac{\rho^2}{3}\int_{\bar{P}^1}{W_\mu y^\mu dy}=-\tfrac{\rho^2}{3}\omega_m R_1^{m+2}\tfrac{\sin^{m+2}(\phi^1)}{m+2}\Ric(C^1,\s)=\tfrac{\rho^2}{3}\omega_m R_1^{m+3}\tfrac{\sin^{m+2}(\phi^1)}{m+2}\cos(\phi^1) \Ric(\s,\s).$}
\end{equation}
In the last step we used the fact that the double bubble $\Sigma$ is centered to get $C^1=-R_1 \cos(\phi^1) \s$. In order to make the   quadratic term explicit, it is convenient to define $I_m(x) \coloneqq \int_0^x{\sin^m(t) dt}$. Using again the symmetry of $\bar{P}^1$, we obtain
\begin{equation}
\begin{aligned}
\int_{\bar{P^1}}{y^\mu y^\nu dy}&=0 \ \ \forall \mu \neq \nu \\
\int_{\bar{P^1}}{(y^{\mu})^2 dy}&=\tfrac{1}{m} \sum_{\mu=1}^m \int_{\bar{P^1}}{(y^{\mu})^2 dy}=\tfrac{1}{m} \int_{0}^{\phi^1}{R_1 \sin(\phi) \bigg( |S^{m-1}| \int_0^{R_1 \sin(\phi)}{r^2 \cdot r^{m-1} dr} \bigg) d\phi}\\
&=\tfrac{|S^{m-1}|}{m} \int_{0}^{\phi^1}{ \tfrac{R_1^{m+3}\sin^{m+3}(\phi)}{m+2} d\phi}=\omega_m R_1^{m+3} \tfrac{I_{m+3}(\phi^1)}{m+2} \ \ \ \forall \mu=1,...,m\\
\int_{\bar{P^1}}{(y^{m+1})^2 dy}&=\int_{0}^{\phi^1}{\omega_m (R_1 \sin(\phi))^{m+1} (R_1 \cos(\phi))^2 d\phi}=\omega_m R_1^{m+3}(I_{m+1}(\phi^1)-I_{m+3}(\phi^1)).
\end{aligned}
\end{equation}
Before substituting this in the above expression, we notice that an integration by parts easily implies
\begin{equation}\label{eq.Irecursion}
I_{m+1}(x)=\big( 1+\tfrac{1}{m+2} \big)I_{m+3}(x)+\tfrac{1}{m+2} \sin^{m+2}(x)\cos(x),
\end{equation}
and that we have $tr(T)=tr(\Ric(p))=\Sc(p)$ and $T_{m+1,m+1}=\Ric(\s,\s)$. Therefore we obtain
\begin{align}
-\tfrac{\rho^2}{6}\int_{\bar{P}^1}{T_{\mu \nu}y^\mu y^\nu dy}&=-\tfrac{\rho^2}{6} \bigg[ \sum_{\mu=1}^m \omega_m R_1^{m+3}\tfrac{I_{m+3}(\phi^1)}{m+2} T_{\mu \mu}+\omega_m R_1^{m+3} (I_{m+1}(\phi^1)-I_{m+3}(\phi^1)) T_{m+1,m+1}\bigg] \nonumber\\
&= -\tfrac{\rho^2}{6}\omega_m R_1^{m+3} \bigg[ \tfrac{I_{m+3}(\phi^1)}{m+2} \Sc(p)+ \tfrac{1}{m+2} \sin^{m+2}(\phi^1)\cos(\phi^1)\Ric(\s,\s)\bigg].\label{eq.quadraticP1}
\end{align}
Substituting \eqref{eq.linearP1} and \eqref{eq.quadraticP1} in \eqref{eq.expansionP1}, using again that $\Sigma$ is centered and \eqref{eq.Irecursion}, and plugging in the value $|\bar{P}^1|=\omega_m R_1^{m+1} I_{m+1}(\phi^1)$, we arrive at
\begin{equation}\label{eq.volumeP1}
\resizebox{0.92\hsize}{!}{ $
\rho^{-(m+1)}\vol(P^1_{(p,\s),\rho})= |P^1|_{m+1}-\tfrac{\rho^2}{6}\omega_m R_1^{m+3} \bigg[ \tfrac{I_{m+3}(\phi^1)}{m+2} \Sc(p)+ \Big(\tfrac{m+3}{m+2} I_{m+3}(\phi^1)-I_{m+1}(\phi^1) \sin^2(\phi^1) \Big)\Ric(\s,\s)\bigg]+\mathcal{O}(\rho^3)$}
\end{equation}
Analogous expansions for the volumes $\vol(P^\sigma_{(p,\s),\rho})$ for $\sigma=0,2$ can be obtained plugging in $R_\sigma$ and $\phi^\sigma$ instead of $R_1$ and $\phi^1$.
Let us remark that for $\sigma=0,2$ we need to reverse the orientation, which corresponds to the choice $A:=(v_1|...|v_m|-\s)$. In the symmetric case we put $\vol(B^0_{(p,\s),\rho})=0$, and recall that $\phi^1=\phi^2=\tfrac{2}{3}\pi$ as well as $C^1=R \tfrac{1}{2} \s=-C^2$.  Accordingly to \eqref{eq.volumedecomposition}, we now need to sum these expansions up to get
\begin{align}
&\rho^{-(m+1)}(V_1)_{(p,\s),\rho}=V_1 +\mathcal{O}(\rho^3)-\omega_m\frac{\rho^2}{6} \Big[ (R_1^{m+3} \tfrac{I_{m+3}(\phi^1)}{m+2} +R_0^{m+3} \tfrac{I_{m+3}(\phi^0)}{m+2})\Sc(p)\label{eq.geodesicvolume1}\\
+ \Ric(\s,\s) \Big( R_1^{m+3}&\big(\tfrac{m+3}{m+2} I_{m+3}(\phi^1)-I_{m+1}(\phi^1) \sin^2(\phi^1) \big) +R_0^{m+3} \big(\tfrac{m+3}{m+2} I_{m+3}(\phi^0)-I_{m+1}(\phi^0) \sin^2(\phi^0) \big) \Big) \Big], \nonumber \\ 
&\rho^{-(m+1)}(V_2)_{(p,\s),\rho}=V_2 -\omega_m\frac{\rho^2}{6} \Big[ (R_2^{m+3} \tfrac{I_{m+3}(\phi^2)}{m+2} -R_0^{m+3} \tfrac{I_{m+3}(\phi^0)}{m+2})\Sc(p) \label{eq.geodesicvolume2}\\
+ \Ric(\s,\s) \Big(R_2^{m+3}&\big(\tfrac{m+3}{m+2} I_{m+3}(\phi^2)-I_{m+1}(\phi^2) \sin^2(\phi^2) \big)  -R_0^{m+3} \big(\tfrac{m+3}{m+2} I_{m+3}(\phi^0)-I_{m+1}(\phi^0) \sin^2(\phi^0) \big) \Big)\Big]. \nonumber
\end{align}
In the symmetric case, we obtain the formulas ($V:=V_1=V_2$)
\begin{equation}\label{eq.geodesicvolumesym} 
\resizebox{0.92\hsize}{!}{ $\rho^{-(m+1)}(V_1)_{(p,\s),\rho}=\rho^{-(m+1)}(V_2)_{(p,\s),\rho}=V-\tfrac{\rho^2}{6}\omega_m \bigg[ R^{m+3} \tfrac{I_{m+3}(\tfrac{2 \pi}{3})}{m+2} \Sc(p)+ R^{m+3}\Big(\tfrac{m+3}{m+2} I_{m+3}(\tfrac{2}{3}\pi)- \tfrac{3}{2}I_{m+1}(\tfrac{2}{3}\pi)\Big)\Ric(\s,\s)\bigg]+\mathcal{O}(\rho^3), $}
\end{equation}
\begin{remark}\label{rem.volumesymmetricbetter}
As a matter of fact, in the symmetric case one can see that the term of order $\rho^3$ in the expansion of $\rho^{-(m+1)}((V_1)_{(p,\s),\rho}+(V_2)_{(p,\s),\rho})$ is zero, since we are integrating an odd function of $\Theta$ (i.e. $\Theta \mapsto \nabla_\Theta \Ric(\Theta,\Theta)$) over the double bubble $\Sigma$, which is invariant under the transformation $\Theta \mapsto -\Theta$.
\end{remark}
\subsection{Areas of Geodesic Double Bubbles}\label{sub.area}
In this subsection we carry out some of the computations which lead to an expansion for the surface area of a geodesic double bubble. The plan is to exploit a good parametrization of a sheet $\Sigma_{(p,\s),\rho}^\sigma$, whose volume element is both explicit and easy to integrate over the hypersurface. To this aim, let us consider a fixed sheet $\Sigma_{(p,\s),\rho}^\sigma=Exp_p(\rho \Sigma^\sigma)$. We will consider the case $\sigma=1$, as before. Set $A:=(v_1 | v_2 | ... | v_m | \s) \in SO(m+1)$ exactly as we did in the previous subsection. Define a parametrization $\Theta : S^{m-1}\times [0,\phi^1] \longrightarrow \hat{\Sigma}$, by $\Theta:=A \circ \bar{\Theta}(\theta,\phi)+C^1$ and $\bar{\Theta}(\theta,\phi):=R_1 (\sin(\phi) \theta,\cos(\phi))$.

Restricting the expansion \eqref{eq.metricnormalcoord} to the point $Exp_p(\rho \Theta(\theta,\phi))$ we get
\begin{equation}
G_{\mu,\nu}=\delta_{\mu, \nu}+\tfrac{\rho^2}{3} \scal{\riem{\Theta}{E_\mu}{\Theta}}{E_\nu} + \mathcal{O}(\rho^3).
\end{equation}
Therefore, calling $\Theta_i$ the tangent vectors of $\Sigma$ induced by $\Theta$, the first fundamental form $g_{(p,\s),\rho}$ of $\Sigma_{(p,\s),\rho}^1$ is given by ($i,j=1,...,m$)
\begin{equation}
(g_{(p,\s),\rho})_{i,j}=\rho^2 (\scal{\Theta_i}{\Theta_j}+\tfrac{\rho^2}{3} \scal{\riem{\Theta}{\Theta_i}{\Theta}}{\Theta_j} + \mathcal{O}(\rho^3)).
\end{equation}
We can rewrite the above expansion  introducing the roto-translated Riemann $(0,4)-$tensor $U$ defined by $U(u_1,u_2,u_3,u_4):=\scal{\riem{A u_1+C^1}{A u_2}{(A u_3+C^1)}}{A u_4}$,  as
\begin{equation}
(g_{(p,\s),\rho})_{i,j}=\rho^2 (\scal{\bar{\Theta}_i}{\bar{\Theta}_j}+\tfrac{\rho^2}{3} U(\bar{\Theta},\bar{\Theta}_i,\bar{\Theta},\bar{\Theta}_j) + \mathcal{O}(\rho^3)).
\end{equation}
The volume element expression follows easily from the Taylor expansion of the determinant, as well as exploiting our explicit parametrization
\begin{equation}
(\sqrt{g})_{(p,\s),\rho}=\rho^{m} R_1^m \sin^{m-1}(\phi) \sqrt{g^{S^{m-1}}} \Big(1+\tfrac{\rho^2}{6} tr(U)(\bar{\Theta},\bar{\Theta})+ \mathcal{O}(\rho^3) \Big).
\end{equation}
By the definition of $U$ we deduce
\begin{equation}
tr(U)(\bar{\Theta},\bar{\Theta})=\Ric(A\bar{\Theta}+C^1,A\bar{\Theta}+C^1)=T(\bar{\Theta},\bar{\Theta})+2 W_\mu(\bar{\Theta})+\Ric(C^1,C^1),
\end{equation}
where, as before, $T=A^T \Ric A$ and $W_\mu=(A^T \Ric)_{\mu, \nu}(C^1)^\nu$.

We can integrate the expansion for $(\sqrt{g})_{(p,\s),\rho}$ over $S^{m-1} \times [0,\phi^1]$ similarly to what we have done in the previous section; after a lengthy calculation, we finally arrive to
\begin{footnotesize}
\begin{equation}\label{eq.areaasymmetric1}
\begin{aligned}
&\area(\Sigma_{(p,\s),\rho}^1)=\int_0^{\phi^1} \int_{S^{m-1}} \rho^{m} R_1^m \sin^{m-1}(\phi) \sqrt{g^{S^{m-1}}} \Big(1+\tfrac{\rho^2}{6}(T(\bar{\Theta},\bar{\Theta})+2 W_\mu(\bar{\Theta})+\Ric(C^1,C^1))+ \mathcal{O}(\rho^3) \Big)\\
&=\rho^m |\Sigma^1|_m+\tfrac{\rho^{m+2}}{6} R_1^{m+2} \omega_m \Big[ I_{m+1}(\phi^1)\Sc(p)+(m \cos^2(\phi^1)I_{m-1}(\phi^1)-\sin^m(\phi^1)\cos(\phi^1))\Ric(\s,\s) \Big]+ \mathcal{O}(\rho^{m+3}).
\end{aligned}
\end{equation}
\end{footnotesize}
To get the expansions for the $m-$dimensional volumes of $\Sigma_{(p,\s),\rho}^0$ (in the asymmetric case) and $\Sigma_{(p,\s),\rho}^2$ (always), one has to substitute $R_1$ and $\phi^1$ with $R_\sigma$ and $\phi^\sigma$ with $\sigma=0, 2$ respectively.

In the symmetric case, we choose a radial parametrization $\Theta:S^{m-1}\times [0,\tfrac{\sqrt{3}}{2}R] \longrightarrow \Sigma^0$ for the sheet $\Sigma_{(p,\s),\rho}^0$, where $\Theta(\theta,r)=A \circ (r \theta)$ and $A$ is as above; we remark that since the double bubble $\Sigma$ is centered, the term of order $\rho^2$ will be purely homogeneous of degree $2$, simplifying a bit the calculation. Arguing as above, we obtain
\begin{equation}
\begin{aligned}
&\area(\Sigma_{(p,\s),\rho}^0)=\rho^m |\Sigma^0|_m+  \tfrac{\rho^{m+2}}{6(m+2)} (\tfrac{\sqrt{3}}{2}R)^{m+2} \omega_m \big(\Sc(p)-2\Ric(\s,\s) \big)+ \mathcal{O}(\rho^{m+3}).
\end{aligned}
\end{equation}
Finally, we write the expansion for the area of the entire geodesic double bubble in the asymmetric case
\begin{footnotesize}
\begin{equation}\label{eq.areaasymmetric}
\begin{aligned}
&\area(\Sigma_{(p,\s),\rho})=\rho^m |\Sigma|_m+\tfrac{\rho^{m+2}}{6}\omega_m \Big(  R_0^{m+2} I_{m+1}(\phi^0)+R_1^{m+2} I_{m+1}(\phi^1)+R_2^{m+2} I_{m+1}(\phi^2)\Big)\Sc(p)\\
&+\tfrac{\rho^{m+2}}{6}\omega_m \Big( R_0^{m+2}(m \cos^2(\phi^0)I_{m-1}(\phi^0)-\sin^m(\phi^0)\cos(\phi^0))+ R_1^{m+2}(m \cos^2(\phi^1)I_{m-1}(\phi^1)-\sin^m(\phi^1)\cos(\phi^1))\\
&+ R_2^{m+2}(m \cos^2(\phi^2)I_{m-1}(\phi^2)-\sin^m(\phi^2)\cos(\phi^2))\Big)\Ric(\s,\s)+ \mathcal{O}(\rho^{m+3}), 
\end{aligned}
\end{equation}
\end{footnotesize}
and in the symmetric case
\begin{equation}\label{eq.areasymmetric}
\resizebox{0.92\hsize}{!}{ $\area(\Sigma_{(p,\s),\rho})=\rho^m |\Sigma|_m+ \tfrac{\rho^{m+2}}{6} \omega_m R^{m+2} \Big( \big( \tfrac{1}{m+2}(\tfrac{\sqrt{3}}{2})^{m+2}+2 I_{m+1}(\tfrac{2}{3} \pi)\big)\Sc(p)+\big(\tfrac{m}{2}I_{m-1}(\tfrac{2}{3}\pi)+\tfrac{2m+1}{2m+4}(\tfrac{\sqrt{3}}{2})^{m}\big)\Ric(\s,\s)\Big)+ \mathcal{O}(\rho^{m+3}) .$}
\end{equation}
\begin{remark}\label{rem.areasymmetricbetter}
The same argument based on symmetry of Remark \ref{rem.volumesymmetricbetter} allows us to substitute $\mathcal{O}(\rho^{m+3})$ with $\mathcal{O}(\rho^{m+4})$.
\end{remark}
\subsection{Approximate Equi-angularity of Geodesic Double Bubbles}\label{sub.equiangularity} 
Concluding the section on geodesic double bubbles, we show that their sheets meet at $120^\circ$-degrees  up to a second order error in the scale $\rho$. Let $\nu_{p,\s}^\sigma$ be the inner conormal vector field defined on $\Gamma_{(p,\s),\rho}$ and pointing towards $\Sigma_{(p,\s),\rho}^\sigma$. Introducing the vectors $\tilde{\nu}_{p,\s}^\sigma:=\Esp( \rho \cdot \nu^\sigma)$, we can use the expansion \eqref{eq.metricnormalcoord} to show that $\nu_{p,\s}^\sigma=\tilde{\nu}_{p,\s}^\sigma/\norm{\tilde{\nu}_{p,\s}^\sigma}_G+\mathcal{O}(\rho^2)$. Consider any point $q \in \Gamma_{(p,\s),\rho}$ and an arbitrary unitary vector $r\in UT_qM$ and put $\hat{r}:=\text{Exp}_q(r)$. Gauss' Lemma guarantees that
\begin{align*}
G(\tilde{\nu}_{p,\s}^0+\tilde{\nu}_{p,\s}^1+\tilde{\nu}_{p,\s}^2,\hat{r})=0.
\end{align*}
By the arbitrarity of $r$ we must have $\tilde{\nu}_{p,\s}^0+\tilde{\nu}_{p,\s}^1+\tilde{\nu}_{p,\s}^2=0$ at $q$. 
Appealing again to the expansion for the metric $G$ given in \eqref{eq.metricnormalcoord}, it is not hard to see that $\norm{\tilde{\nu}_{p,\s}^\sigma}_G=1+\mathcal{O}(\rho^2)$, so we deduce
\begin{equation}\label{eq.geodesicequiangular}
\nu_{p,\s}^0+\nu_{p,\s}^1+\nu_{p,\s}^2(q)=:\mathfrak{e}_{p,\s}(q)=\mathcal{O}(\rho^2).
\end{equation}
One can show that $\mathfrak{e}_{p,\s}$ is a purely geometric function on $\Gamma_{(p,\s),\rho}$ depending smoothly on the metric $G$, the scale $\rho$ and on the point $(p,\s) \in UTM$. However, we will always consider $\mathfrak{e}_{p,\s}$ as a generic term of order $\mathcal{O}(\rho^2)$ in the sequel.
\section{Perturbed Double Bubbles}\label{sec.perturbed}
In this section we will study the geometry of perturbations of geodesic double bubbles, with the scope of computing expansions for the mean curvatures of their sheets; these formulas will play a crucial role in the proof of Theorems \ref{th.mainasymmetric} and \ref{th.mainsymmetric}. Due to the presence 
of a singular set, we will need to combine normal and tangent variations.
\subsection{The Class of Perturbations}\label{sub.class}
Consider a standard double bubble $\Sigma$ in $\mathbb{R}^{m+1}$ with the sheets meeting along a common boundary $\Gamma$. We will use the same conventions as in Section \ref{sec.preliminary}.
Let us define a perturbation $\phi_{w,Y} \colon \Sigma \longrightarrow \mathbb{R}^{m+1}$ by 
\begin{equation}\label{eq.definitionperturbation}
\phi_{w,Y} \colon x \longmapsto x + w(x) N(x) + Y(x), 
\end{equation}
where in each sheet $\Sigma^\sigma$, $w=w_\sigma \in C^{2,\alpha}(\Sigma^\sigma)$, $N=N^\sigma$ and $Y=Y_\sigma \in C^{1,\alpha}(\Sigma^\sigma;  T\Sigma^\sigma)$.
\begin{figure}[h]\label{fig.admissible}
	\begin{minipage}[l]{0.48\linewidth}
\includegraphics[scale=0.55]{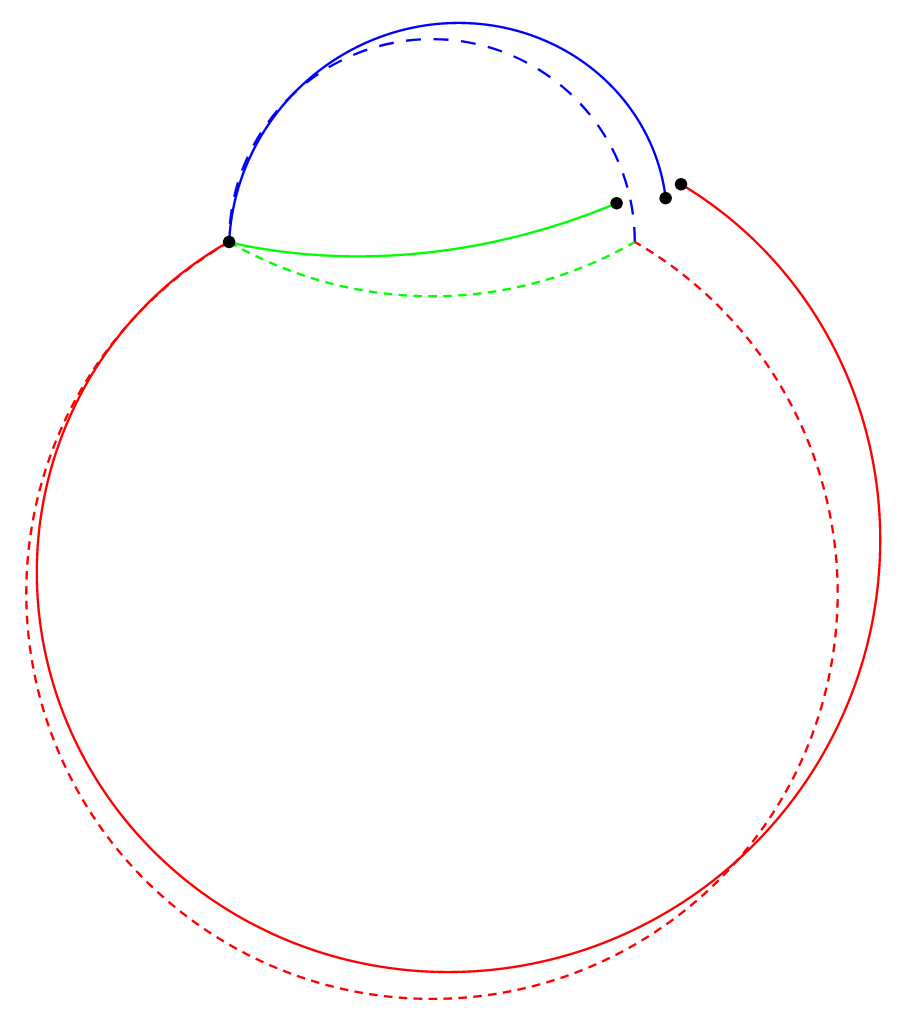}
\end{minipage} 
\begin{minipage}[r]{0.3\linewidth}
\includegraphics[scale=0.21]{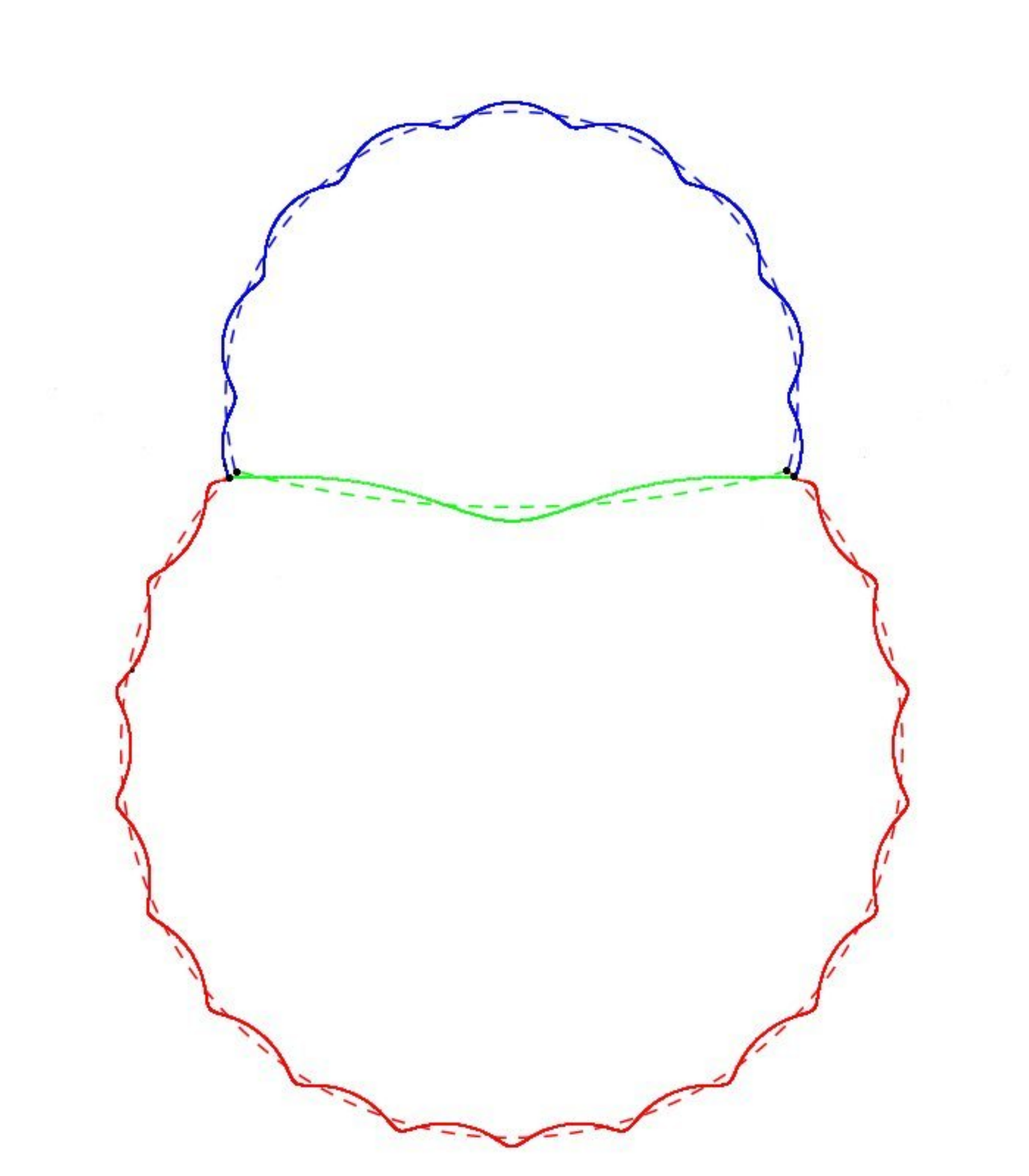}
\end{minipage}
\caption{Non-admissible and admissible perturbations}
\end{figure}
For this function to be well-defined, we have to impose that the images of the points of $\Gamma$ through $\phi_{w,Y}$ do not depend on the sheet used to compute them, that is we need
\begin{equation}
x + w_0(x) N^0(x) + Y_0(x) = x + w_1(x) N^1(x) + Y_1(x) = x + w_2(x) N^2(x) + Y_2(x),
\end{equation}
whenever $x \in \Gamma$. Clearly, this equation is equivalent to the following
\begin{equation}\label{eq.junction}
w_0(x) N^0(x) + Y_0(x) = w_1(x) N^1(x) + Y_1(x) =  w_2(x) N^2(x) + Y_2(x).
\end{equation}
Let us split  $Y_\sigma(x)$ as $Y_\sigma(x)= (Y_\sigma(x))^\Gamma + u_\sigma(x) \nu^\sigma (x)$, where  $\cdot ^\Gamma$ stands for the orthogonal projection of a vector onto the space $T_x \Gamma$ and $\nu^\sigma (x)\in T_x \Sigma^\sigma$ is the inward pointing unit vector normal to $T_x \Gamma$.
Projecting equation \eqref{eq.junction} onto $T_x \Gamma$ we get 
\begin{equation}\label{eq.junctionGamma}
(Y_0(x))^\Gamma = (Y_1(x))^\Gamma = (Y_2(x))^\Gamma.
\end{equation}
Substituting in \eqref{eq.junction}, we obtain
\begin{equation}\label{eq.junctionGammaperp}
w_0(x)N^0(x) + u_0(x)\nu^0(x) = w_1(x)N^1(x) + u_1(x)\nu^1(x) = w_2(x)N^2(x) + u_2(x)\nu^2(x).
\end{equation}
Taking the scalar products of the latter with $\nu^1$ and $N^1$, and using the fact that the sheets of $\Sigma$ meet in an equi-angular way, we deduce (dropping the $x$-dependence)
\begin{equation*}
\begin{cases}
w_1 = \tfrac{1}{2} w_2 - \tfrac{\sqrt{3}}{2} u_2 = \tfrac{1}{2} w_0 + \tfrac{\sqrt{3}}{2} u_0 \\ 
u_1 = - \tfrac{\sqrt{3}}{2} w_2 - \tfrac{1}{2} u_2 = \tfrac{\sqrt{3}}{2} w_0 - \tfrac{1}{2} u_0.
\end{cases}
\end{equation*}
This system is equivalent to the following, where we are writing everything in terms of $w_0$ and $w_2$: 
\begin{align}
w_1 &= w_0 + w_2 \label{eq.boundaryconditionw}\\
u_0=\tfrac{1}{\sqrt{3}}w_0+\tfrac{2}{\sqrt{3}}w_2; \quad u_1&=\tfrac{1}{\sqrt{3}}w_0-\tfrac{1}{\sqrt{3}}w_2; \quad u_2=-\tfrac{2}{\sqrt{3}}w_0-\tfrac{1}{\sqrt{3}}w_2,\label{eq.boundaryconditionu}
\end{align}
which in turn  is equivalent to \eqref{eq.junctionGammaperp}. Furthermore, 
\eqref{eq.junctionGammaperp} and \eqref{eq.junctionGamma} imply \eqref{eq.junction}, so we will use \eqref{eq.junctionGamma}, \eqref{eq.boundaryconditionw} and \eqref{eq.boundaryconditionu} because these are more suitable conditions for the Dirichlet problem we will consider in Section \ref{sec.pseudodb}.
\begin{definition}\label{def.admissibleperturbationdefinition}
We define the class of \emph{admissible pairs} as 
\begin{align*}
\mathcal{C}_{amm} \coloneqq \{ &(w,Y) \in \big( C^{2,\alpha}(\Sigma) \cap C^{0,\alpha}(\overline{\Sigma}) \big) \times \big( C^{1,\alpha}(\Sigma; \mathbb{R}^{m+1})\cap C^{0,\alpha}(\overline{\Sigma}; \mathbb{R}^{m+1}) \cap T\Sigma \big) \mid (w,Y) \ \text{verifies} \\
 &\eqref{eq.junctionGamma}, \eqref{eq.boundaryconditionw}, \eqref{eq.boundaryconditionu}, \text{ and }\norm{w}_{C^{2,\alpha}},\norm{Y}_{C^{1,\alpha}} \le \delta \},
\end{align*}
for some $\delta$ small enough. Moreover,  define the class of \emph{admissible perturbations} as $\{ \phi_{w,Y} \mid (w,Y) \in \mathcal{C}_{amm} \}$.
\end{definition}
Here $\delta$ is chosen small enough so that the image $\phi_{w,Y}(\Sigma)$ is homeomorphic to $\Sigma$.
When imposing a perturbed double bubble to have sheets with (almost) constant mean curvature, we will need to solve a Dirichlet problem, whose leading term is given by a second-order elliptic operator involving only the normal components $w_\sigma$'s of the perturbation considered, the so-called \emph{Jacobi operator}. It is then clear that such a problem is under-determined (after having fixed a tangential component $Y$), when solving for a general normal component $w$ of an admissible pair $(w,Y)$, and we need to impose other two conditions on the $w_\sigma$'s. A good choice for these two conditions (in Euclidean ambient) is inspired by the analysis in \cite{dim}, and consists in imposing the perturbation $\phi_{w,Y}$ to preserve the condition on the sheets to meet in an equi-angular way, so the inner conormal vectors $\nu^\sigma_{w,Y}$ should verify
\begin{equation}\label{eq.euclideanequiangularity}
\nu^0_{w,Y}+\nu^1_{w,Y}+\nu^2_{w,Y}=0.
\end{equation}
It can be shown that this equation is non-linear and involves only $w, Y$ and \emph{their first derivatives} with respect to the reference inner normal vectors $\nu^\sigma$'s. This subtlety reveals crucial in solving the boundary value problem we will deal with, as well as in gaining the regularity of the solution. However, it will be more convenient for us to impose an equi-angularity condition \emph{directly} for the perturbed surface in the manifold, see equation \eqref{eq.perturbedequiangularity}; indeed, this choice will allow us to get rid of a troublesome boundary term appearing in the proof of one of the key steps in the proof of the main Theorem \ref{th.nondeg}, namely Proposition \ref{prop.criticalpointCMC} we refer the reader to Section \ref{sec.existence} for more details. Appealing to the results in \cite{dim}, we are able to produce a unique solution to the associated problem with \emph{linearized equi-angularity} condition, and then see our original problem, under the condition \eqref{eq.perturbedequiangularity}, as a perturbation of this linearized version. In order to introduce this linearization, we set
\begin{equation}\label{eq.definitionq}
q_0:=\tfrac{1}{\sqrt{3}}(H_1+H_2), \quad q_1:=\tfrac{1}{\sqrt{3}}(H_0-H_2), \quad q_2:=-\tfrac{1}{\sqrt{3}}(H_1+H_0).
\end{equation}
Projecting the equation on the vectors $N^0$ and $N^2$, it can then be shown (see \cite{hut}) that the condition described above is equivalent to the system
\begin{equation}\label{eq.linearisedequiangularity} 
\begin{cases}
\tfrac{\partial w_0}{\partial \nu^0} +q_0 w_0 +\tfrac{\partial w_1}{\partial \nu^1} +q_1 w_1 =0 \quad \text{on} \ \Gamma;\\
\tfrac{\partial w_1}{\partial \nu^1} +q_1 w_1 +\tfrac{\partial w_2}{\partial \nu^2} +q_2 w_2 =0 \quad \text{on} \ \Gamma.
\end{cases}
\end{equation}
\begin{figure}[h]\label{fig.equiangularity}
\begin{minipage}[l]{0.48\linewidth}
\hspace{-0.5cm}
\includegraphics[scale=3]{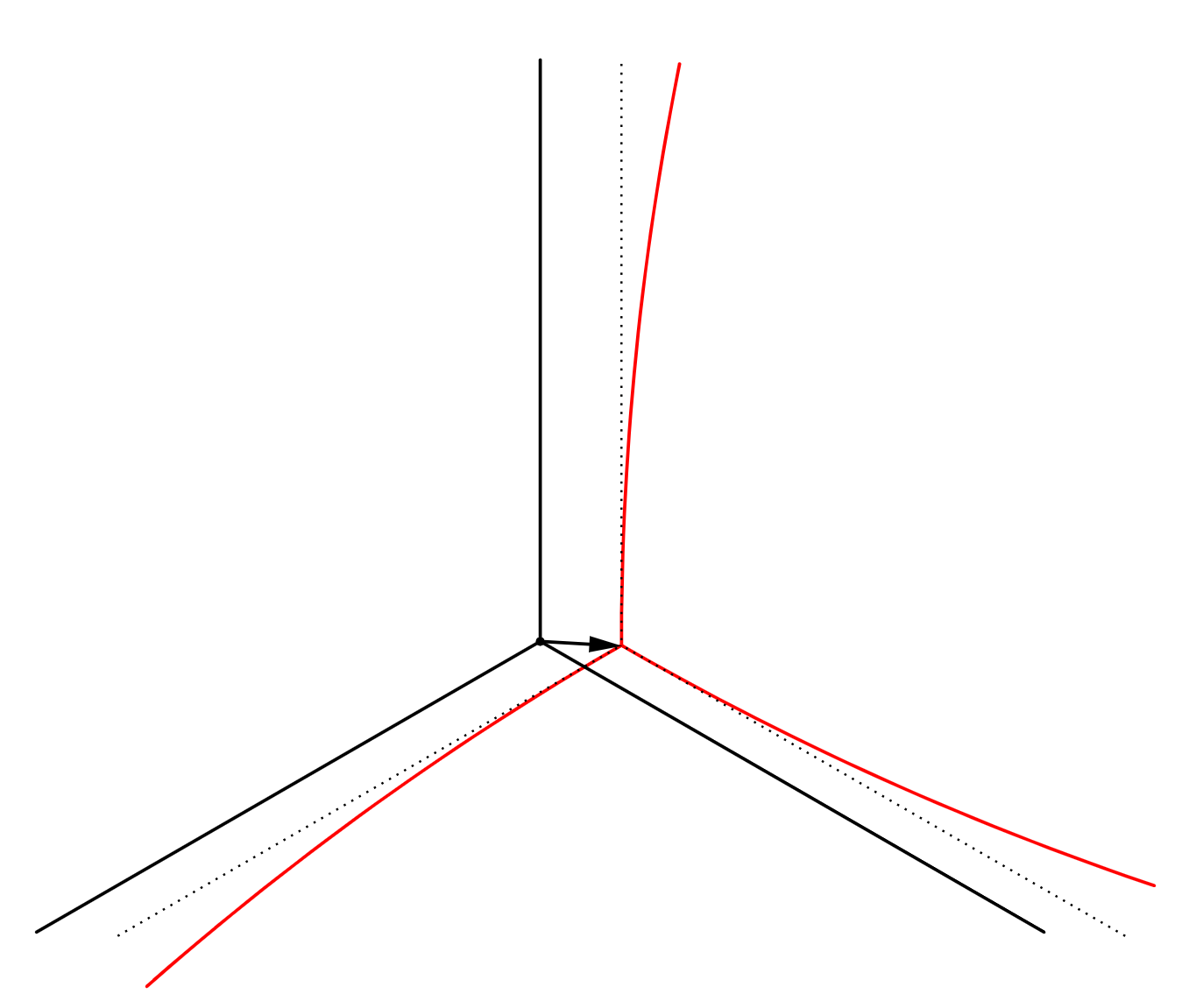}
\end{minipage} \hspace{0.5cm}
\begin{minipage}[r]{0.3\linewidth}
\hspace{-2.3cm}
\includegraphics[scale=0.2]{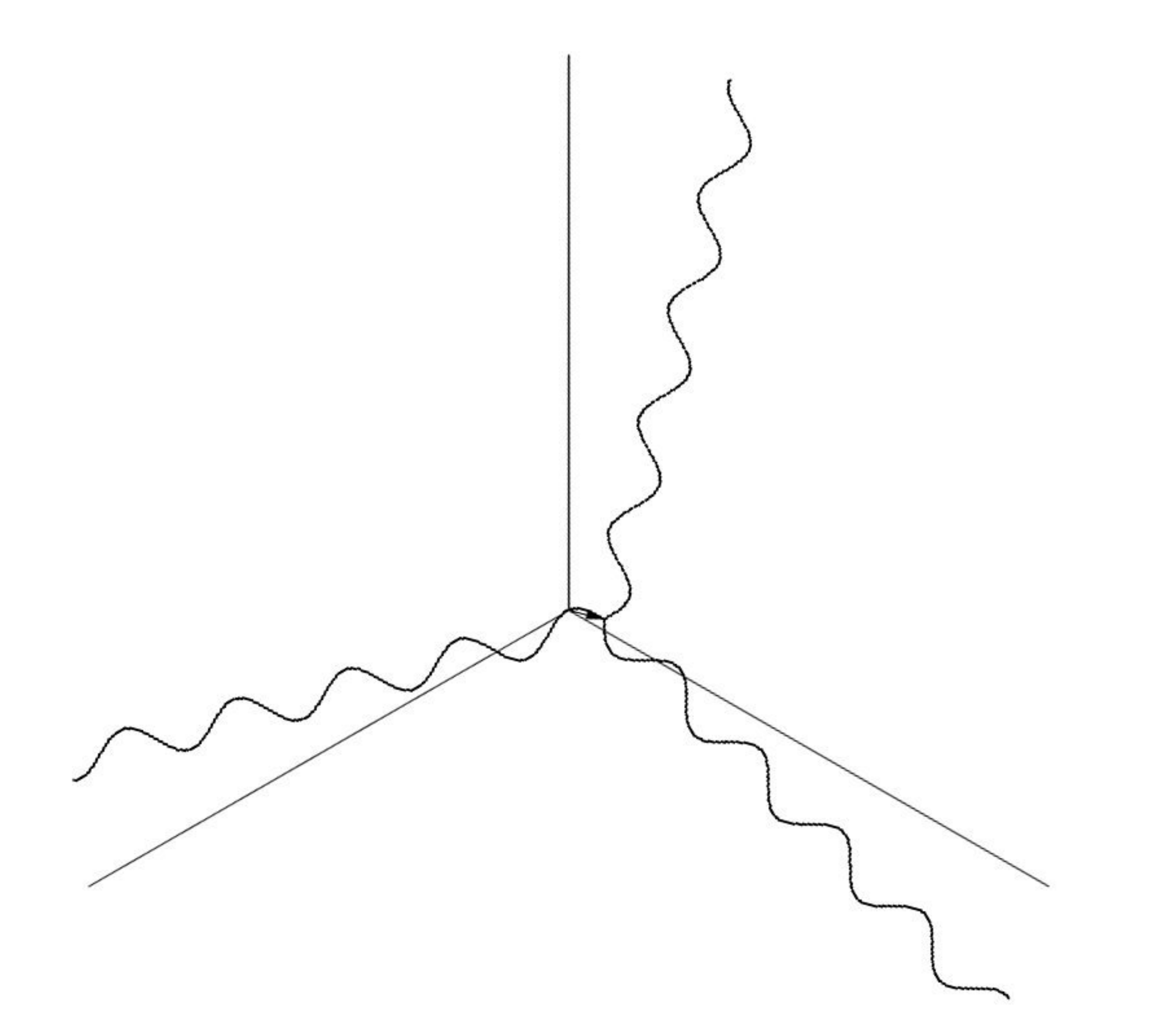}
\end{minipage}
\caption{The linearized equi-angularity condition and the equi-angularity condition}
\end{figure}
 We remark that these equations do not depend on the tangential component $Y$. System \eqref{eq.linearisedequiangularity} furnishes two additional Robin-type conditions which makes our  \emph{linearized} problem for the normal components $w_\sigma$'s well-determined, i.e. we get existence and uniqueness of a solution to our elliptic problem, \emph{for any fixed tangential component} $Y$. Therefore, we will need to fix some suitable $Y$, verifying the boundary conditions \eqref{eq.junctionGamma} and \eqref{eq.boundaryconditionu} and also the norm bound $\norm{Y}_{C^{1,\alpha}} \le \delta$. We have great freedom of choice for this $Y$ but, as already mentioned in the introduction, we will be naturally led to impose additional constraints. We refer the reader to Section \ref{sec.pseudodb} for further details.
\begin{remark}
It is worth mentioning that we are not assuming the perturbations just introduced to preserve the volumes enclosed by the double bubble in consideration; as already remarked in \cite{dim}, the absence of this constraint reflects a stronger stability valid for standard double bubbles proved in \cite{slob}, on which the results in \cite{dim} and those presented here ultimately rely upon.
\end{remark}
\subsection{First Fundamental Form for Perturbed Geodesic Double Bubbles}\label{sub.first}
Throughout the rest of this section, we will consider a given point $(p,\s) \in UTM$ and a centered standard double bubble $\Sigma \subseteq T_p M$ aligned along it.  Recall the orthonormal basis of $T_pM$ given by the vectors $E_\mu$'s from the previous section. Let us denote by $\Theta=\Theta^\mu E_\mu:U' \longrightarrow \Sigma$ a parametrization of $\Sigma$, defined on an open set $U' \subset \R^m$, and let $z=(z^i)_{i=1,...,m} \in U'$.

For a function $f$ (or a vector field $V$) defined on $\Sigma$, we write the first-order derivative $f_i(z) = \partial_{z^i}f(z) $ (resp. $(V_i(z))^\mu = \partial_{z^i}(V(z)^\mu)$), and similarly for higher-order derivative. In particular, the parametrization $\Theta$ induces coordinate vector fields $\Theta_i:=\Theta_i^\mu E_\mu$ on $\Sigma$.

As in \cite{pac}, it is useful to adopt the following convention: any expression of the form $L (w,Y)$ denotes a linear combination of the functions $w$ and $Y$, together with their derivatives up to order $2$ for $w$ and $1$ for $Y$ with respect to the vector fields $\Theta_i$ . 
The coefficients of $L$ depend smoothly on $\rho$ and $(p,\s)$ and, for all $k \in \mathbb{N}$, there exists a constant $c_k > 0$ independent of $\rho \in (0,1)$ and $(p,\s) \in UTM$ such that
\begin{equation}\label{eq.propertyL}
\norm{L(w,Y)}_{C^{k,\alpha}(\Sigma)} \le c_k \norm{(w,Y)}_{C^{k+2,\alpha}(\Sigma)\times C^{k+1,\alpha}(\Sigma;\R^{m+1})}.
\end{equation}
Similarly, given $a \in \mathbb{N}$, any expression of the form $Q^{(a)} (w,Y)$ denotes a nonlinear operator in the functions $w$ and $Y$, together with their derivatives with respect to the vector fields $\Theta_i$ up to order $2$ and $1$ respectively. The coefficients of the Taylor expansion of $Q^{(a)} (w,Y)$ in powers of $w$ and $Y$ and their partial derivatives depend smoothly on $\rho$ and $(p,\s)$ and, given $k \in \mathbb{N}$, there exists a constant $d_k > 0$ independent of $\rho \in (0, 1)$, $(p,\s) \in UTM$ such that $Q^{(a)} (0,\cdot) = Q^{(a)}(\cdot,0) = 0$ and
\begin{equation}\label{eq.propertyQ}
\begin{aligned}
&\norm{Q^{(a)}(w_2,Y_2) - Q^{(a)}(w_1,Y_1)}_{C^{k,\alpha}(\Sigma)} \le d_k\norm{(w_2-w_1,Y_2-Y_1)}_{C^{k+2,\alpha}(\Sigma) \times C^{k+1,\alpha}(\Sigma;\R^{m+1})} \\
& \quad \quad \quad \quad \quad \cdot ( \norm{(w_1,Y_1)}_{C^{k+2,\alpha}(\Sigma)\times C^{k+1,\alpha}(\Sigma;\R^{m+1})} +\norm{(w_2,Y_2)}_{C^{k+2,\alpha}(\Sigma)\times C^{k+1,\alpha}(\Sigma;\R^{m+1}) }  )^{a-1},
\end{aligned}
\end{equation}
provided that $\norm{(w_i,Y_i)}_{C^{k+2,\alpha}(\Sigma) \times C^{k+1,\alpha}(\Sigma;\R^{m+1})}$ are small enough. For our purposes, we will consider mostly nonlinearities of the type $\rho^k Q^{(2)}$ for $k \ge 0$, so we will frequently absorb terms of the form $\rho^j Q^{(a)}$ in it when $a \ge 2$ and $j \ge k$ (there is no mistake in doing it, because $\rho^j Q^{(a)}$ easily verify the inequality defining $ \rho^k Q^{(2)}$ in that case). Similarly, we will absorb terms of the form $\rho^j L$ into $\rho^k L$ whenever $j \ge k$. A typical example of $Q^{(a)}$ can be a homogeneous polynomial of degree $a$, in $w,Y$ and their derivatives up to order $2$ and $1$ respectively (e.g. $w_jRic(Y,Y_i)= Q^{(3)}(w,Y)$). 
\begin{remark}\label{rem.Ogeometric}
We remark that the absorption convention on $L(w,Y)$ and $Q^{(a)}(w,Y)$ just introduced, guarantees that any term of the form $\mathcal{O}(\rho^k)$ is \textit{purely geometric}, in the sense that it is independent of $w$ and $Y$. This is important for our fixed point argument in Section 5.
\end{remark}

From now on, we fix an admissible pair $(w,Y)\in \mathcal{C}_{amm}$ and its induced perturbation $\phi_{w,Y}$. Let us focus on {the interior part of one sheet} $\Sigma^\sigma$, and drop the index $\sigma$; then $\Sigma$ is either a spherical cap included in a sphere $S(C,R)$ or a disk. In the first case, we deduce that $T_\Theta \Sigma = \Span (\Theta_i) = (\Theta-C)^\perp$ where $i=1,...,m$ and that the second fundamental form of it with respect to the inward normal is given by $\tfrac{1}{R}Id$. Since $Y$ is tangent to $\Sigma$ we must have $\scal{C-\Theta}{Y}=0$. Moreover, we can decompose the derivatives of the vector field $Y$ as $Y_i=\nabla^{\Sigma}_i Y+\tfrac{\scal{Y}{\Theta_i}}{R}N$.

In the second case, we have $T_\Theta \Sigma = \Span (\Theta_i) = \s^\perp$, in particular the normal vector $N=\s$ does not depend on $\Theta$. The second fundamental form is trivial, and since $Y$ is tangent we must have $Y_i=\nabla^{\Sigma}_i Y$.
\vspace{0.5cm}

{Let us recall the coordinate vector fields $X_\mu$'s defined as in Section \ref{sec.geodesic}.} Defining the vector fields $\mathcal{C}:=C^\mu X_\mu$, $\mathcal{N}:=N^\mu X_\mu$, $\Upsilon:=\Theta^\mu X_\mu$, $\Upsilon_i:=\Theta_i^\mu X_\mu$, $\Omega \coloneqq Y^\mu X_\mu$, $\Omega_i \coloneqq Y_i^\mu X_\mu$, we will use their coordinates with respect to the local frame $X_\mu$ together with the formula in Proposition \ref{prop.metricnormalcoord} to deduce the needed expansions.

We want to compute an expansion for the first fundamental form $\mathring{g}$ of the perturbed hypersurface $\Sigma_{(p,\s),\rho}(w,Y):= \Esp (\rho \cdot \phi_{w,Y}(\Sigma))=: M_\rho (\Sigma)$, in terms of the first fundamental form $g$ of $\Sigma$. Notice that $M_\rho(\Theta(z))=\Esp(\rho(\Theta+w(z)N(\Theta(z))+Y(\Theta(z))))$; we will omit the $z-$dependence in what follows.
Firstly, consider the case in which $\Sigma$ is a spherical cap; in order to find a basis of the tangent space of $\Sigma_{(p,\s),\rho}(w,Y)$ at a point $q$, we take the push-forward of the basis $(\Theta_i)_{i=1,...,m}$ of $T_{M^{-1}_{\rho}(q)} \Sigma$ through $M_\rho$: 
\begin{align}\label{eq.tangentvectors}
 Z_i(q) = \rho(\Upsilon_i (1-\tfrac{w}{R}) + \tfrac{w_i}{R}(\mathcal{C}-\Upsilon) + \Omega_i)(M^{-1}_{\rho}(q)).
\end{align}
Using equation \eqref{eq.metricnormalcoord} with $\Xi = \Esp^{-1}(M_\rho (\Theta(z))) =\rho ((1- \tfrac{w}{R}) \Theta+\tfrac{w}{R}C+Y)$, we obtain the following expansion for the ambient metric $G$ at an arbitrary point $q=\Esp(\Xi)=M_\rho(z) \in \Sigma_{(p,\s),\rho}(w,Y)$
\begin{align*}
G(X_\mu,X_\nu) =&\delta_{\mu,\nu} + \tfrac{1}{3}\scal{\riem{\rho ((1- \tfrac{w}{R}) \Theta +\tfrac{w}{R}C+Y)}{E_\mu}{\rho ((1- \tfrac{w}{R}) \Theta+\tfrac{w}{R}C+Y)}}{E_\nu} \\
&+ \tfrac{1}{6} \scal{\nabla_\Xi \riem{\Xi}{E_\mu}{\Xi}}{E_\nu} \big|_{\Xi = \rho ((1- \tfrac{w}{R}) \Theta +\tfrac{w}{R}C+Y)} {+\rho^4L(w,Y)+\rho^4 Q^{(2)}(w,Y)}+ \mathcal{O}(\rho^4) .
\end{align*}
For convenience of the reader, we separate the addenda by their order in $\rho$:
\begin{align*}
  0^{th} \ \text{order:}& \quad \delta_{\mu,\nu}; \quad { 1^{st} \ \text{order:} \quad 0}; \\ 
  2^{nd} \ \text{order:}& \quad \tfrac{1}{3}\scal{\riem{\rho ((1- \tfrac{w}{R}) \Theta+\tfrac{w}{R}C +Y)}{E_\mu}{\rho ((1- \tfrac{w}{R}) \Theta+\tfrac{w}{R}C+Y)}}{E_\nu} \\
  =& \tfrac{1}{3} \rho^2 \scal{\riem{(1- \tfrac{w}{R}) \Theta +\tfrac{w}{R}C+Y}{E_\mu}{((1- \tfrac{w}{R}) \Theta+\tfrac{w}{R}C+Y)}}{E_\nu} \\
  =& \tfrac{1}{3} \rho^2 \Big[ (1-\tfrac{w}{R})^2 \scal{\riem{\Theta}{E_\mu}{\Theta}}{E_\nu} + \tfrac{w^2}{R^2} \scal{\riem{C}{E_\mu}{C}}{E_\nu} + \scal{\riem{Y}{E_\mu}{Y}}{E_\nu} \\ 
  &+  \tfrac{w}{R}(1-\tfrac{w}{R}) \big[ \scal{\riem{\Theta}{E_\mu}{C}}{E_\nu} + \scal{\riem{C}{E_\mu}{\Theta}}{E_\nu} \big] + (1-\tfrac{w}{R}) \big[ \scal{\riem{\Theta}{E_\mu}{Y}}{E_\nu} \\ 
  &+ \scal{\riem{Y}{E_\mu}{\Theta}}{E_\nu} \big] + \tfrac{w}{R} \big[ \scal{\riem{C}{E_\mu}{Y}}{E_\nu} + \scal{\riem{Y}{E_\mu}{C}}{E_\nu} \big] \Big].
\end{align*}
For the third order, we have: 
\begin{align*}
  &3^{rd} \ \text{order:} \quad \tfrac{1}{6} \scal{\nabla_\Xi \riem{\Xi}{E_\mu}{\Xi}}{E_\nu} \big|_{\Xi = \rho ((1- \tfrac{w}{R}) \Theta +\tfrac{w}{R}C +Y)} \\
  &= \tfrac{1}{6} \rho^3 \scal{\nabla_\Theta \riem{\Theta}{E_\mu}{\Theta}}{E_\nu} + \rho^3 L(w,Y) {+\rho^3 Q^{(2)}(w,Y)}.
\end{align*}
The expansion for $\mathring{g}$ can now be easily deduced  computing $\mathring{g}_{i,j} \coloneqq G(Z_i,Z_j)$
\begin{align*}
  &\rho^{-2} \mathring{g}_{i,j} = \rho^{-2} G(Z_i,Z_j) \\
  &= G(\Upsilon_i (1-\tfrac{w}{R})+\tfrac{w_i}{R}(\mathcal{C}-\Upsilon)+\Omega_i,\Upsilon_j (1-\tfrac{w}{R})+\tfrac{w_j}{R}(\mathcal{C}-\Upsilon)+\Omega_j) \\
  &= (\Theta_i(1-\tfrac{w}{R})+\tfrac{w_i}{R}(C-\Theta)+Y_i)^\mu (\Theta_j(1-\tfrac{w}{R})+\tfrac{w_j}{R}(C-\Theta)+Y_j)^\nu G(X_\mu,X_\nu).
\end{align*}
As before, we separate the addenda by their order in $\rho$:
\begin{align*}
  0^{th} \ \text{order:}& \quad (\Theta_i(1-\tfrac{w}{R})+\tfrac{w_i}{R}(C-\Theta)+Y_i )^\mu (\Theta_j(1-\tfrac{w}{R})+\tfrac{w_j}{R}(C-\Theta)+Y_j)^\nu \delta_{\mu,\nu} \\
  =& (1-\tfrac{w}{R})^2 \scal{\Theta_i}{\Theta_j} + w_i w_j + \scal{Y_i}{Y_j}+(1-\tfrac{w}{R})\scal{Y_i}{\Theta_j} \\
  &+(1-\tfrac{w}{R})\scal{\Theta_i}{Y_j} + w_i \scal{N}{Y_j}+ w_j \scal{Y_i}{N}\\
  =& (1-\tfrac{w}{R})^2 g_{i,j}+ w_i w_j + g(\nabla_i^{\Sigma} Y,\nabla_j^{\Sigma} Y)+\tfrac{1}{R^2}g(\Theta_i,Y)g(Y,\Theta_j) \\
  &+(1-\tfrac{w}{R}) g(\nabla_i^{\Sigma} Y,\Theta_j)+(1-\tfrac{w}{R})g(\Theta_i,\nabla_j^{\Sigma} Y) + \tfrac{w_i}{R} g(Y,\Theta_j)+ \tfrac{w_j}{R} g(\Theta_i,Y).
\end{align*}
{Once again, the first order is null.} For the second order, using the previous $L-Q^{(a)}$'s formalism {(and absorbing $Q^{(a)}$ into the quadratic term $Q^{(2)}$) }we obtain
\begin{align*}
 2^{nd} \ \text{order:}&= \tfrac{1}{3} \rho^2 (1-\tfrac{w}{R})^2 \biggl\{ (1-\tfrac{w}{R})^2 \Big[ \scal{(\riem{\Theta}{\Theta_i}{\Theta}}{\Theta_j} + \tfrac{w_j}{R}\scal{\riem{\Theta}{\Theta_i}{\Theta}}{C} \\
 &+ \tfrac{w_i}{R}\scal{\riem{\Theta}{C}{\Theta}}{\Theta_j}+ \scal{\riem{\Theta}{\Theta_i}{\Theta}}{Y_j} + \scal{\riem{\Theta}{Y_i}{\Theta}}{\Theta_j} \Big]\\
&+\tfrac{w}{R} \Big[  \scal{\riem{\Theta}{\Theta_i}{C}}{\Theta_j}+ \scal{\riem{C}{\Theta_i}{\Theta}}{\Theta_j} \Big]+\Big[ \scal{\riem{\Theta}{\Theta_i}{Y}}{\Theta_j} \\
 &+ \scal{\riem{Y}{\Theta_i}{\Theta}}{\Theta_j} \Big] + Q^{(2)}(w,Y) \biggr\}= \tfrac{1}{3} \rho^2 \biggl\{ (1-\tfrac{w}{R})^4 \scal{\riem{\Theta}{\Theta_i}{\Theta}}{\Theta_j} \\
&+  \Big[ \tfrac{w_j}{R}\scal{\riem{\Theta}{\Theta_i}{\Theta}}{C} + \tfrac{w_i}{R}\scal{\riem{\Theta}{C}{\Theta}}{\Theta_j} \Big]+ \Big[ \scal{\riem{\Theta}{\Theta_i}{\Theta}}{Y_j} \\
&+ \scal{\riem{\Theta}{Y_i}{\Theta}}{\Theta_j} \Big]+\tfrac{w}{R} \Big[  \scal{\riem{\Theta}{\Theta_i}{C}}{\Theta_j}+ \scal{\riem{C}{\Theta_i}{\Theta}}{\Theta_j} \Big]\\
&+\Big[ \scal{\riem{\Theta}{\Theta_i}{Y}}{\Theta_j} + \scal{\riem{Y}{\Theta_i}{\Theta}}{\Theta_j} \Big] + Q^{(2)}(w,Y) \biggr\}.
\end{align*}
The $3^{rd}$-order term is simpler than the previous one, because we seek for less information:
\begin{align*}
  3^{rd} \ \text{order:}&= \tfrac{1}{6} \rho^3 \scal{\nabla_\Theta \riem{\Theta}{\Theta_i}{\Theta}}{\Theta_j} + \rho^3 L(w,Y) {+\rho^3 Q^{(2)}(w,Y)}.
\end{align*}
Finally, we arrive at the following analogue to Lemma $2.1$ in \cite{pac}.
\begin{proposition}[Perturbed First Fundamental Form - Spherical Case]\label{prop.perturbedfirst}
The expansion for the first fundamental form of the perturbed spherical cap is given by
 \begin{align}
 \rho^{-2} (1-\tfrac{w}{R})^{-2} &\mathring{g}_{i,j} = g_{i,j}+g(\nabla^\Sigma_i Y,\Theta_j)+g(\Theta_i,\nabla^\Sigma_j Y)+ \mathcal{Q}(w,Y) + \tfrac{1}{3} \rho^2 \biggl\{ (1-\tfrac{w}{R})^4 \scal{\riem{\Theta}{\Theta_i}{\Theta}}{\Theta_j} \nonumber \\
&+  \Big[ \tfrac{w_j}{R}\scal{\riem{\Theta}{\Theta_i}{\Theta}}{C} + \tfrac{w_i}{R}\scal{\riem{\Theta}{C}{\Theta}}{\Theta_j} \Big]+ \Big[ \scal{\riem{\Theta}{\Theta_i}{\Theta}}{Y_j} + \scal{\riem{\Theta}{Y_i}{\Theta}}{\Theta_j} \Big] \label{eq.perturbedfirst} \\
&+\tfrac{w}{R} \Big[  \scal{\riem{\Theta}{\Theta_i}{C}}{\Theta_j}+ \scal{\riem{C}{\Theta_i}{\Theta}}{\Theta_j} \Big]+\Big[ \scal{\riem{\Theta}{\Theta_i}{Y}}{\Theta_j} + \scal{\riem{Y}{\Theta_i}{\Theta}}{\Theta_j} \Big] \biggr\} \nonumber \\
&+ \tfrac{1}{6} \rho^3 \scal{\nabla_\Theta \riem{\Theta}{\Theta_i}{\Theta}}{\Theta_j} + \rho^3 L(w,Y) + \rho^2 Q^{(2)}(w,Y)+ \mathcal{O}(\rho^4), \nonumber
  \end{align}
where $\mathcal{Q}(w,Y) = (1-\tfrac{w}{R})^{-2} \big[ w_i w_j + g(\nabla_i^{\Sigma} Y,\nabla_j^{\Sigma} Y)+\tfrac{1}{R^2}g(\Theta_i,Y)g(Y,\Theta_j)+\tfrac{w}{R} g(\nabla_i^{\Sigma} Y,\Theta_j)+\tfrac{w}{R}g(\Theta_i,\nabla_j^{\Sigma} Y) + \tfrac{w_i}{R} g(Y,\Theta_j)+ \tfrac{w_j}{R} g(\Theta_i,Y) \big]= Q^{(2)} (w,Y)$.
\end{proposition}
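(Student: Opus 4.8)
The plan is to compute $\mathring g_{i,j} = G(Z_i,Z_j)$ directly from the explicit push-forward formula \eqref{eq.tangentvectors} for the tangent vectors $Z_i$, together with the expansion of the ambient metric $G(X_\mu,X_\nu)$ in normal coordinates from Proposition \ref{prop.metricnormalcoord} evaluated at the perturbed point $\Xi = \rho((1-\tfrac{w}{R})\Theta + \tfrac{w}{R}C + Y)$. All the hard work has in fact already been carried out in the displays immediately preceding the statement, where the ambient metric and $\rho^{-2}\mathring g_{i,j}$ have been expanded order-by-order in $\rho$; so the proof is essentially a matter of collecting those terms, regrouping them, and identifying which pieces are purely geometric and which fall into the $L$ or $Q^{(a)}$ classes.

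The key steps, in order, would be: (i) record the zeroth-order term, using $\Theta_i^\mu \delta_{\mu\nu}\Theta_j^\nu = g_{i,j}$, the orthogonality $\scal{C-\Theta}{\Theta_j}=\scal{C-\Theta}{\Theta_j}$ vanishing because $\Theta_j \perp (\Theta - C)$ on the sphere $S(C,R)$ (this kills several cross terms with $\tfrac{w_i}{R}$), and the decomposition $Y_i = \nabla^\Sigma_i Y + \tfrac{\scal{Y}{\Theta_i}}{R}N$ to rewrite $\scal{\Theta_i}{Y_j}$ and $\scal{Y_i}{N}$ in intrinsic terms; this produces $g_{i,j} + g(\nabla^\Sigma_i Y,\Theta_j) + g(\Theta_i,\nabla^\Sigma_j Y)$ plus a collection of terms quadratic in $(w,Y)$, which one verifies satisfy \eqref{eq.propertyQ} and hence constitute $\mathcal Q(w,Y) = Q^{(2)}(w,Y)$. (ii) Observe the first-order term in $\rho$ vanishes, since the $\mathcal O(|x|)$ coefficient in \eqref{eq.metricnormalcoord} is absent. (iii) Carry the second-order term through: substitute the expansion of $G(X_\mu,X_\nu)$ at second order into $G(Z_i,Z_j)$, factor out $\tfrac{1}{3}\rho^2(1-\tfrac{w}{R})^2$ from the leading sphere-metric contribution, expand $\riem{\cdot}{\cdot}{\cdot}$ multilinearly in $(1-\tfrac{w}{R})\Theta + \tfrac{w}{R}C + Y$ against the perturbed tangent vectors, and sort the resulting terms: those with at most one factor of $w$ or $Y$ (and no derivatives beyond first order in $Y$) are kept explicitly, while everything quadratic-or-higher in $(w,Y)$ or their first derivatives is absorbed into $\rho^2 Q^{(2)}(w,Y)$. (iv) For the third-order term, keep only the purely geometric piece $\tfrac16\rho^3\scal{\nabla_\Theta\riem{\Theta}{\Theta_i}{\Theta}}{\Theta_j}$ and absorb the rest into $\rho^3 L(w,Y) + \rho^3 Q^{(2)}(w,Y)$, noting $\rho^3 Q^{(2)}$ absorbs into $\rho^2 Q^{(2)}$ for $\rho<1$. (v) Collect the $\mathcal O(\rho^4)$ remainder, which by Remark \ref{rem.Ogeometric} is purely geometric. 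Finally, divide through by $(1-\tfrac{w}{R})^2$ to reach the stated form \eqref{eq.perturbedfirst}.

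The main obstacle, modest as it is, is bookkeeping: one must be careful that the absorption conventions are applied consistently — in particular that the factor $(1-\tfrac{w}{R})^4$ surviving in front of $\scal{\riem{\Theta}{\Theta_i}{\Theta}}{\Theta_j}$ is legitimately kept rather than expanded (since $\tfrac{w}{R}\cdot(\text{geometric})$ is a $\rho^0 L$-type term, one could in principle absorb it, but the authors choose to display it), and that every cross term genuinely quadratic in $(w,Y)$ satisfies the Lipschitz-type bound \eqref{eq.propertyQ} with the correct order in $\rho$. One must also take care that the $N$-components arising from $Y_i = \nabla^\Sigma_i Y + \tfrac{\scal{Y}{\Theta_i}}{R}N$ pair correctly — e.g. $w_i\scal{N}{Y_j} = \tfrac{w_i}{R}g(Y,\Theta_j)$ — and that no term is double-counted when passing between the $\scal{\cdot}{\cdot}$ and $g(\cdot,\cdot)$ notations. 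Since this is the direct analogue of Lemma 2.1 in \cite{pac}, with the only genuinely new feature being the presence of the tangential field $Y$ (forced by the singular junction $\Gamma$), the computation follows a well-trodden path and no conceptual difficulty arises.
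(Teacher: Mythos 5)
Your proposal is correct and follows essentially the same route as the paper: the statement is proved by the order-by-order expansion of $\rho^{-2}\mathring g_{i,j}=\rho^{-2}G(Z_i,Z_j)$ carried out in the displays just before the proposition, using the explicit form \eqref{eq.tangentvectors} of $Z_i$, the normal-coordinate expansion \eqref{eq.metricnormalcoord} at $\Xi=\rho((1-\tfrac{w}{R})\Theta+\tfrac{w}{R}C+Y)$, the orthogonality $\scal{C-\Theta}{\Theta_j}=0$, the decomposition $Y_i=\nabla^\Sigma_iY+\tfrac{\scal{Y}{\Theta_i}}{R}N$, and absorption of the remaining pieces into $\rho^3L$, $\rho^2Q^{(2)}$ and $\mathcal O(\rho^4)$, followed by division by $(1-\tfrac{w}{R})^2$. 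The only minor point worth flagging is that after dividing by $(1-\tfrac{w}{R})^2$ one should note that the residual factor $(1-\tfrac{w}{R})^{-2}$ multiplying the $\rho^2$ and $\rho^3$ blocks differs from $1$ only by terms that are themselves absorbed into $\rho^2L(w,Y)+\rho^2Q^{(2)}(w,Y)$, which you implicitly invoke via the absorption conventions.
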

Let us remark that, in the case $Y=0$ and $C=0$, we recover the  expansion in  Lemma $2.1$ of \cite{pac}, except for the rescaling $w \leadsto \tfrac{w}{R}$.
\vspace{0.3cm}

For later purposes, we give an explicit expansion for the inverse of the metric 
\begin{align}
  \mathring{g}^{i,j} =&  \rho^{-2} (1-\tfrac{w}{R})^{-2} \bigg\{ g^{i,j} + g^{i,k}\Big[ -g(\nabla_k^{\Sigma} Y,\Theta_m)-g(\Theta_k,\nabla_m^{\Sigma}Y)+Q^{(2)}(w,Y)-\tfrac{1}{3} \rho^2 \scal{\riem{\Theta}{\Theta_k}{\Theta}}{\Theta_m} \nonumber\\
  &+\rho^2 L(w,Y)+ \mathcal{O}(\rho^3) \Big]g^{m,j} \bigg\}= \rho^{-2} (1-\tfrac{w}{R})^{-2} g^{i,j} -\rho^{-2} g^{i,k}(g(\nabla_k^{\Sigma} Y,\Theta_m)+g(\Theta_k,\nabla_m^{\Sigma}Y))g^{m,j} \label{eq.inversemetric}\\
  &-\tfrac{1}{3}g^{i,k} \scal{\riem{\Theta}{\Theta_k}{\Theta}}{\Theta_m}g^{m,j}+L(w,Y)+\rho^{-2}Q^{(2)}(w,Y)+\mathcal{O}(\rho).\nonumber
\end{align}
In  case we are perturbing a flat disk, the $i-$th coordinate tangent vector induced by a parametrization $\Theta$ is $Z_i:= \rho(\Upsilon_i+w_i \mathcal{N}+\Omega_i)$, so  analogous computations yield the expansion for the metric stated in the following proposition.
\begin{proposition}[Perturbed First Fundamental Form - Disk Case]
The expansion for the first fundamental form of the perturbed disk is given by
\begin{align}
 \rho^{-2} \mathring{g}_{i,j} =&\delta_{i,j}+g(\nabla_i^{\Sigma} Y,\Theta_j)+g(\Theta_i,\nabla_j^{\Sigma} Y)+w_i w_j +g(\nabla_i^{\Sigma}Y,\nabla_j^{\Sigma} Y)+\tfrac{1}{3} \rho^2 \bigg\{ \scal{\riem{\Theta}{\Theta_i}{\Theta}}{\Theta_j} \nonumber \\
&+w [\scal{\riem{N}{\Theta_i}{\Theta}}{\Theta_j}+\scal{\riem{\Theta}{\Theta_i}{N}}{\Theta_j}]+[\scal{\riem{\Theta}{\Theta_i}{Y}}{\Theta_j}+\scal{\riem{Y}{\Theta_i}{\Theta}}{\Theta_j}] \label{eq.perturbedfirstdisk}\\
&+[w_i \scal{\riem{\Theta}{N}{\Theta}}{\Theta_j}+w_j \scal{\riem{\Theta}{\Theta_i}{\Theta}}{N}]+[\scal{\riem{\Theta}{\Theta_i}{\Theta}}{\nabla_j^{\Sigma} Y}+\scal{\riem{\Theta}{\nabla_i^{\Sigma} Y}{\Theta}}{\Theta_j}]\bigg\}\nonumber \\
&+ \tfrac{1}{6} \rho^3 \scal{\nabla_\Theta \riem{\Theta}{\Theta_i}{\Theta}}{\Theta_j} +\rho^3 L(w,Y)+ \rho^2 Q^{(2)}(w,Y)+ \mathcal{O}(\rho^4).\nonumber
  \end{align}
\end{proposition}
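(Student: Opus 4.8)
The plan is to mirror the derivation already carried out for the spherical-cap case (Proposition \ref{prop.perturbedfirst}), specializing to the degenerate situation where $\Sigma^0$ is a flat $m$-disk and making the simplifications that arise because the second fundamental form vanishes, the center $C$ is absent, and the normal $N=\s$ is constant along $\Sigma$. First I would fix a parametrization $\Theta$ of the flat disk and record that, since $N$ does not depend on $\Theta$, pushing the coordinate vector fields $\Theta_i$ forward through $M_\rho = \Esp(\rho\cdot)$ gives $Z_i = \rho(\Upsilon_i + w_i \mathcal{N} + \Omega_i)$, with no term of the form $\tfrac{w_i}{R}(\mathcal{C}-\Upsilon)$ and no overall factor $(1-\tfrac{w}{R})$; this is exactly the tangent-vector formula stated just before the proposition. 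Because $Y$ is tangent and $h\equiv 0$ on the disk, one has $Y_i = \nabla_i^\Sigma Y$ (no normal correction), which removes the $\tfrac{1}{R^2}g(\Theta_i,Y)g(Y,\Theta_j)$-type contributions.

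Next I would insert $Z_i$ into the ambient-metric expansion \eqref{eq.metricnormalcoord} evaluated at $\Xi = \Esp^{-1}(M_\rho(\Theta(z))) = \rho(\Theta + wN + Y)$ and expand $\mathring{g}_{i,j}=G(Z_i,Z_j)$ order by order in $\rho$, precisely as in the spherical computation. The $0^{th}$-order term is the Euclidean inner product of $\Theta_i + w_i N + Y_i$ with $\Theta_j + w_j N + Y_j$; using $\scal{\Theta_i}{N}=0$ and $Y_i=\nabla_i^\Sigma Y$ this collapses to $\delta_{i,j} + g(\nabla_i^\Sigma Y,\Theta_j)+g(\Theta_i,\nabla_j^\Sigma Y)+w_iw_j+g(\nabla_i^\Sigma Y,\nabla_j^\Sigma Y)$, which is the first line of \eqref{eq.perturbedfirstdisk}. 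The $1^{st}$-order term vanishes as always in normal coordinates. For the $2^{nd}$-order term I would expand $\riem{\rho(\Theta+wN+Y)}{E_\mu}{\rho(\Theta+wN+Y)}$ multilinearly: the pure $\Theta$-$\Theta$ piece gives $\scal{\riem{\Theta}{\Theta_i}{\Theta}}{\Theta_j}$, the cross terms linear in $w$ (through $N$) and linear in $Y$ give the bracketed terms displayed in the middle lines of \eqref{eq.perturbedfirstdisk}, and all terms quadratic or higher in $(w,Y)$ are absorbed into $\rho^2 Q^{(2)}(w,Y)$ using the absorption conventions of Section \ref{sec.perturbed}; the contributions coming from differentiating the $w_i\mathcal{N}+\Omega_i$ part of $Z_i$ against the $\Theta$-$\Theta$ curvature term account for the last two brackets $[w_i\scal{\riem{\Theta}{N}{\Theta}}{\Theta_j}+w_j\scal{\riem{\Theta}{\Theta_i}{\Theta}}{N}]$ and $[\scal{\riem{\Theta}{\Theta_i}{\Theta}}{\nabla_j^\Sigma Y}+\scal{\riem{\Theta}{\nabla_i^\Sigma Y}{\Theta}}{\Theta_j}]$. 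The $3^{rd}$-order term is handled coarsely: only the leading $\scal{\nabla_\Theta\riem{\Theta}{\Theta_i}{\Theta}}{\Theta_j}$ is retained and everything else is dumped into $\rho^3 L(w,Y)$; likewise the $\mathcal{O}(|x|^4)$ remainder in \eqref{eq.metricnormalcoord} yields $\mathcal{O}(\rho^4)$.

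Finally I would assemble the pieces and check that the error bookkeeping is consistent — that the quadratic-in-$(w,Y)$ debris genuinely satisfies \eqref{eq.propertyQ} and the linear debris \eqref{eq.propertyL}, and that, in accordance with Remark \ref{rem.Ogeometric}, the $\mathcal{O}(\rho^k)$ terms are purely geometric. The main obstacle I anticipate is purely organizational rather than conceptual: keeping careful track of which $O(w^2,wY,Y^2)$ contributions arise simultaneously from the $0^{th}$-order expansion, from the curvature evaluated at the perturbed point, and from the $\rho^4 L+\rho^4 Q^{(2)}$ correction already present in the metric expansion, and verifying that all of them can be consistently absorbed into a single $\rho^2 Q^{(2)}(w,Y)$ (there being no $\mathcal{Q}(w,Y)$-type prefactor $(1-\tfrac{w}{R})^{-2}$ here, since $R=\infty$ on the disk). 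Since the disk case is strictly simpler than the spherical case already treated, no new analytic input is needed; one is essentially setting $C=0$, $R\to\infty$, and $h=0$ in the previous argument.
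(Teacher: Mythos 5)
Your proposal is correct and takes essentially the same route as the paper: the paper itself does not give a separate proof for the disk case but simply asserts that ``analogous computations'' to the spherical one (with $Z_i = \rho(\Upsilon_i + w_i\mathcal{N} + \Omega_i)$, $C$ absent, no $(1-\tfrac{w}{R})$ factor, and $Y_i = \nabla_i^\Sigma Y$ because $h\equiv 0$) yield the stated expansion, and your write-up is precisely that computation. Your bookkeeping of the $0^{\mathrm{th}}$-, $2^{\mathrm{nd}}$- and $3^{\mathrm{rd}}$-order terms, the absorption of quadratic debris into $\rho^2 Q^{(2)}(w,Y)$, and the observation that the missing prefactor and center terms correspond to $R\to\infty$, $C=0$ in the spherical formula, are all consistent with the paper's conventions.
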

As already done above, we explicit an expansion for the metric's inverse:
\begin{equation}
\begin{aligned}\label{eq.inversemetricdisk}
  \mathring{g}^{i,j} =& \rho^{-2} \delta^{i,j} -\rho^{-2} \delta^{i,k}(g(\nabla_k^{\Sigma} Y,\Theta_m)+g(\Theta_k,\nabla_m^{\Sigma}Y))\delta^{m,j}-\tfrac{1}{3}\delta^{i,k} \scal{\riem{\Theta}{\Theta_k}{\Theta}}{\Theta_m}\delta^{m,j}\\
  &+L(w,Y)+\rho^{-2}Q^{(2)}(w,Y)+\mathcal{O}(\rho).
\end{aligned}
\end{equation}
\subsection{Second Fundamental Form for Perturbed Geodesic Double Bubbles}\label{sub.second}
In order to obtain the mean curvature of the different sheets of the perturbed double bubble considered above, we first of all need to compute expansions for their second fundamental forms, which we are about to present. The calculation is quite lengthy and more involved compared to the analogous one in \cite{pac}, due to both the presence of the center $C$ of the double bubble and  the tangential component $Y$. If on one hand the presence of the center cannot be neglected since there cannot be a common one for the three sheets at once, on the other hand its location  will contribute only by an amount of order $\rho^2$ or higher; this should not be surprising, as the second fundamental form of a hypersurface in $\R^{m+1}$ is invariant under translation. Moreover, we will see how the tangential component $Y$ will appear in the expansion of the second fundamental form as a Lie derivative (\emph{at its lowest order}), and therefore how the mean curvature will not depend (again, at the lowest orders) on it.
\vspace{0.3cm}

Let us begin with some expansions related to the unit normal vector field  $\mathring{\mathcal{N}}$ to the hypersurface $\Sigma_{(p,\s),\rho}(w,Y)$. We firstly focus our attention to the spherical cap case, postponing the flat disk one. Since this hypersurface is a small perturbation of a spherical cap, one expects the inner normal vector to be close to the inner vector toward the center. This heuristic justifies the choice of searching a normal vector of the form $\mathring{ \mathcal{M}} \coloneqq \mathcal{C}-\Upsilon + a^j Z_j$ and then renormalize it, with the (small) coefficients $a^j$ chosen so that $G(\mathring{ \mathcal{M}},Z_j)=0$ for all $j=1,...,m$. Let us explicit what condition the $a^j$'s have to verify at a point $q=M_\rho(z) \in \Sigma_{(p,\s),\rho}(w,Y)$:
\begin{align}
  0=&G(Z_i, \mathring{ \mathcal{M}}) = G(Z_i,\mathcal{C}-\Upsilon + a^j Z_j)= G(Z_i,\mathcal{C}-\Upsilon) + a^j \mathring{g}_{i,j} \nonumber\\
  =& a^j \mathring{g}_{i,j}+ \rho G((1-\tfrac{w}{R}) \Upsilon_i+\tfrac{w_i}{R}(\mathcal{C}-\Upsilon)+\Omega_i, \mathcal{C}-\Upsilon) \nonumber\\
  =& a^j \mathring{g}_{i,j} + \rho (1-\tfrac{w}{R}) \Big( \tfrac{\rho^2}{3} \scal{\riem{\Theta}{\Theta_i}{\Theta}}{C}+\rho^2L(w,Y){+\rho^2 Q^{(2)}(w,Y)}+ \mathcal{O}(\rho^3) \Big) \nonumber\\
&+ \rho \tfrac{w_i}{R} \Big( R^2 {+\rho^2 L(w,Y)+\rho^2 Q^{(2)}(w,Y)}+ \mathcal{O}(\rho^2) \Big)+ \rho \Big(g(\Theta_i,Y) +\rho^2 L(w,Y) {+\rho^2 Q^{(2)}(w,Y)}+ \mathcal{O}(\rho^3) \Big) \nonumber\\
&=a^j \mathring{g}_{i,j}+\rho w_i R+\rho g(\Theta_i,Y) +\tfrac{\rho^3}{3}  \scal{\riem{\Theta}{\Theta_i}{\Theta}}{C} \label{eq.Morthogonal}+\rho^3 L(w,Y) {+\rho^3 Q^{(2)}(w,Y)}+ \mathcal{O}(\rho^4), 
\end{align}
where we have used Proposition \ref{prop.metricnormalcoord}. Appealing again to the same proposition, and to the orthogonality just imposed, we evaluate the squared norm of $\mathring{ \mathcal{M}}$:
\begin{align*}
  G(\mathring{ \mathcal{M}},\mathring{ \mathcal{M}})=& G(\mathcal{C}-\Upsilon + a^i Z_i, \mathring{ \mathcal{M}}) = G(\mathcal{C}-\Upsilon,\mathcal{C}-\Upsilon) + G(\mathcal{C}-\Upsilon,Z_j) a^j \\
  =& R^2 + \tfrac{\rho^2}{3} \scal{\riem{\Theta}{C}{\Theta}}{C} + \rho^2 L(w,Y){+\rho^2 Q^{(2)}(w,Y)}+ \mathcal{O}(\rho^3) + \Big[\rho (w_j R + g(Y,\Theta_j)) \\
  &+ \tfrac{\rho^3}{3} \scal{\riem{\Theta}{C}{\Theta}}{\Theta_j} + \rho^3 L(w,Y){+\rho^3 Q^{(2)}(w,Y)} + \mathcal{O}(\rho^4) \Big] a^j.
\end{align*}
Thanks to \eqref{eq.inversemetric} and \eqref{eq.Morthogonal} we can approximate the coefficients $a^j$ as follows
\begin{equation}
\begin{aligned}
a^j &= \mathring{g}^{i,j}[\rho L(w,Y) {+\rho^3 Q^{(2)}(w,Y)}+ \mathcal{O}(\rho^3)]= [\rho L(w,Y) {+\rho^3 Q^{(2)}(w,Y)}+ \mathcal{O}(\rho^3)]\cdot \\
&\cdot [\mathcal{O}(\rho^{-2}){+\rho^{-2}L(w,Y)+\rho^{-2} Q^{(2)}(w,Y)}]= \rho^{-1} L(w,Y){+\rho^{-1} Q^{(2)}(w,Y)}+\mathcal{O}(\rho), 
\end{aligned}
\end{equation}
to get
\begin{align}
  G(\mathring{ \mathcal{M}},\mathring{ \mathcal{M}})=&R^2 + \tfrac{\rho^2}{3} \scal{\riem{\Theta}{C}{\Theta}}{C}+\rho^2 L(w,Y){+\rho^2 Q^{(2)}(w,Y)}+ \mathcal{O}(\rho^3) \nonumber \\
&+ \Big[\rho(w_j R+g(Y,\Theta_j))+ \tfrac{\rho^3}{3} \scal{\riem{\Theta}{C}{\Theta}}{\Theta_j} + \rho^3 L(w,Y) {+\rho^3 Q^{(2)}(w,Y)}+ \mathcal{O}(\rho^4) \Big] \cdot \nonumber\\
& (\rho^{-1} L(w,Y){+\rho^{-1} Q^{(2)}(w,Y)}+\mathcal{O}(\rho)) \label{eq.Mnorm}\\
  =& R^2 + \tfrac{\rho^2}{3} \scal{\riem{\Theta}{C}{\Theta}}{C} + \rho^2 L(w,Y)+ Q^{(2)}(w,Y) + \mathcal{O}(\rho^3).\nonumber 
\end{align}
We deduce the following expansion
\begin{equation}
 G(\mathring{ \mathcal{M}},\mathring{ \mathcal{M}})^{-\tfrac{1}{2}} =R^{-1}\Big[1-\tfrac{\rho^2}{6R^2} \scal{\riem{\Theta}{C}{\Theta}}{C} + \rho^2 L(w,Y)+ Q^{(2)}(w,Y) + \mathcal{O}(\rho^3) \Big].
\end{equation}
Thus we define the unitary normal vector $\mathring{\mathcal{N}} \coloneqq  G(\mathring{ \mathcal{M}},\mathring{ \mathcal{M}})^{-\tfrac{1}{2}} \mathring{ \mathcal{M}}$. Similar computations yield, in the flat disk case, the following equation for the coefficients $a^j$, where $\mathring{ \mathcal{M}}=\mathcal{N}+a^j Z_j$:
\begin{equation}
  0=a^j \mathring{g}_{i,j}+\rho w_i+\tfrac{\rho^3}{3} \scal{\riem{\Theta}{\Theta_i}{\Theta}}{N}+ \rho^3 L(w,Y) {+\rho^3 Q^{(2)}(w,Y)}+ \mathcal{O}(\rho^4),
\end{equation}
as well as the one for the inverse of the norm
\begin{equation}
G(\mathring{ \mathcal{M}},\mathring{ \mathcal{M}})^{-\tfrac{1}{2}} =1-\tfrac{\rho^2}{6} \scal{\riem{\Theta}{N}{\Theta}}{N}+ \rho^2 L(w,Y)+ Q^{(2)}(w,Y) + \mathcal{O}(\rho^3) .
\end{equation}
Once again we put $\mathring{\mathcal{N}} \coloneqq  G(\mathring{ \mathcal{M}},\mathring{ \mathcal{M}})^{-\tfrac{1}{2}} \mathring{ \mathcal{M}}$.
\vspace{0.3cm}

An expansion for the second fundamental form of the {perturbed hyper-surface is presented in the following theorem, for the case of a spherical cap contained in $S(C,R)$.}
\begin{theorem}[Perturbed Second Fundamental Form - Spherical Case]\label{th.perturbedsecond}
The second fundamental form $\mathring{h}_{i,j}$ of $\Sigma_{p,\rho}(w,Y)$ has the following expansion:
\begin{equation}\label{eq.perturbedsecond}
   \begin{aligned}
      \mathring{h}_{i,j} =&\tfrac{\rho}{R}(1-\tfrac{w}{R}) g_{i,j}+\tfrac{\rho}{R}( g(\Theta_i,\nabla_j^{\Sigma} Y)+g(\nabla_i^{\Sigma} Y,\Theta_j))+ \rho Hess^{\Sigma} w+\tfrac{\rho^3}{6R}\mathscr{S}_{i,j}(\Theta,C)\\
&+\rho^3 L(w,Y)+\rho Q^{(2)}(w,Y)+\mathcal{O}(\rho^4),
     \end{aligned}
\end{equation}
where $\mathscr{S}_{i,j}$ is the symmetric $(0,2)-$tensor given by
\begin{equation}\label{eq.definitionS}
\resizebox{0.92\hsize}{!}{ $\mathscr{S}_{i,j}(\Theta,C):=4 \scal{\riem{\Theta}{\Theta_i}{\Theta}}{\Theta_j}-2\scal{\riem{C}{\Theta_i}{\Theta}}{\Theta_j}-2\scal{\riem{\Theta}{\Theta_i}{C}}{\Theta_j}+ R^{-2}\scal{\riem{\Theta}{C}{\Theta}}{C} g_{i,j}.$ }
\end{equation}
\end{theorem}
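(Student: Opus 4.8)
The plan is to compute $\mathring h_{i,j} = -G(\partial_i \mathring{\mathcal N}, Z_j)$ (equivalently $G(\mathring{\mathcal N}, \partial_i Z_j)$ up to sign conventions) using the explicit unit normal $\mathring{\mathcal N} = G(\mathring{\mathcal M},\mathring{\mathcal M})^{-1/2}\mathring{\mathcal M}$ with $\mathring{\mathcal M} = \mathcal C - \Upsilon + a^j Z_j$ constructed above, together with the tangent vectors $Z_i = \rho(\Upsilon_i(1-\tfrac wR) + \tfrac{w_i}{R}(\mathcal C - \Upsilon) + \Omega_i)$ from \eqref{eq.tangentvectors}. Since the $a^j$ coefficients were shown to be of order $\rho^{-1}L(w,Y) + \rho^{-1}Q^{(2)}(w,Y) + \mathcal O(\rho)$ in \eqref{eq.Mnorm} and the preceding display, the term $a^j Z_j$ contributes only to the $L$, $Q^{(2)}$ and $\mathcal O(\rho^4)$ remainders after the relevant rescalings; thus the leading geometry comes from $\mathring{\mathcal M} \approx \mathcal C - \Upsilon$. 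First I would expand $G(\partial_i Z_j, \mathcal C - \Upsilon)$, carefully differentiating $Z_j$ in the coordinate direction: the derivative of $(1-\tfrac wR)\Upsilon_j$ produces both a Hessian-of-$w$ term (through $-\tfrac{w_i}{R}\Upsilon_j$ and the pairing with $\mathcal C - \Upsilon$, whose Euclidean size is $R$) and, via the flat second fundamental form $\tfrac 1R \mathrm{Id}$ of the unperturbed cap, the leading term $\tfrac{\rho}{R}(1-\tfrac wR)g_{i,j}$.

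The second step is to track the tangential component $Y$. Exactly as in the heuristic remark preceding the theorem, $Y$ should enter at lowest order as a Lie-derivative-type symmetrization: differentiating the $\Omega_j$ contribution to $Z_j$ and pairing against the normal direction, using the decomposition $Y_i = \nabla^\Sigma_i Y + \tfrac{g(Y,\Theta_i)}{R}N$ recorded earlier for the spherical case, yields the symmetric term $\tfrac{\rho}{R}(g(\Theta_i,\nabla^\Sigma_j Y) + g(\nabla^\Sigma_i Y,\Theta_j))$; all genuinely quadratic-in-$Y$ pieces and the pieces coupling $Y$ to $w$ get absorbed into $\rho Q^{(2)}(w,Y)$ using the formalism of \eqref{eq.propertyL}–\eqref{eq.propertyQ}. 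The curvature contribution is extracted from the $\rho^2$-order part of the metric expansion $G(X_\mu,X_\nu) = \delta_{\mu\nu} + \tfrac 13 \rho^2\langle R_p(\Xi, E_\mu)\Xi, E_\nu\rangle + \cdots$ with $\Xi = \rho((1-\tfrac wR)\Theta + \tfrac wR C + Y)$: one differentiates this in $z^i$, pairs against $\mathcal C - \Upsilon$ and $Z_j$, uses the normalization factor $G(\mathring{\mathcal M},\mathring{\mathcal M})^{-1/2} = R^{-1}(1 - \tfrac{\rho^2}{6R^2}\langle R_p(\Theta,C)\Theta,C\rangle + \cdots)$, and collects the four curvature terms into $\mathscr S_{i,j}(\Theta,C)$. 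Setting $w=Y=0$, $C=0$ should reproduce the curvature term of Lemma 2.1 in \cite{pac} as a consistency check, which pins down the numerical coefficients $4$ and the $R^{-2}$ normalization in \eqref{eq.definitionS}.

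The main obstacle is bookkeeping: organizing the many cross terms so that everything not explicitly displayed is provably of the claimed type $\rho^3 L(w,Y) + \rho Q^{(2)}(w,Y) + \mathcal O(\rho^4)$, and in particular verifying that the $\mathcal O(\rho^3)$ remainder is \emph{purely geometric} (independent of $w,Y$), as Remark \ref{rem.Ogeometric} requires — this forces one to be disciplined about never hiding a $w$- or $Y$-dependence inside an $\mathcal O(\cdot)$. The delicate points are: (i) confirming that the center $C$ contributes at order $\rho^2$ only, consistent with translation-invariance of the Euclidean second fundamental form, so that $\mathscr S_{i,j}$ correctly packages the $C$-dependence; (ii) checking that the term $\rho\, \mathrm{Hess}^\Sigma w$ appears with coefficient exactly $1$ and no spurious $\tfrac 1R$ factor, which comes from the interplay between the $\tfrac{w_i}{R}(\mathcal C - \Upsilon)$ piece of $Z_i$ and the length $R$ of $\mathcal C - \Upsilon$; and (iii) ensuring the $Y$-contribution is genuinely the symmetrized covariant derivative and that no first-order-in-$Y$ term survives outside this symmetric combination, which is what ultimately makes the mean curvature independent of $Y$ at leading order after tracing. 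The flat disk case is entirely analogous with $\mathring{\mathcal M} = \mathcal N + a^j Z_j$, $C$ absent, and $N=\s$ constant, so I would treat it in a brief parallel remark rather than repeating the computation.
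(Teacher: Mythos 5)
Your plan takes a genuinely different route from the paper, and it contains one concrete error that would make the answer come out wrong. On the route: the paper does not expand $G(\mathring{\mathcal N},\nabla_{Z_i}Z_j)$ directly. It computes $\mathring k_{ij}=-G(\nabla_{Z_i}\mathring{\mathcal M},Z_j)=G(\nabla_{Z_i}(\Upsilon-\mathcal C),Z_j)-G(\nabla_{Z_i}(a^k Z_k),Z_j)$, handles the second piece by differentiating the orthogonality relation \eqref{eq.Morthogonal}, and for the first piece introduces the radial flow $\tilde G(s,z)=\Esp(\rho(C+s(\Theta-C+wN+Y)))$ with $Z_0=d\tilde G[\partial_s]$ and $[Z_0,Z_i]=0$, which reduces $G(\nabla_{Z_i}(\Upsilon-\mathcal C),Z_j)$ to $\tfrac12\partial_s\mathring g_{ij}(s)|_{s=1}$ plus controlled corrections. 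Your direct pairing of $\partial_i Z_j$ against $\mathcal C-\Upsilon$ is a legitimate alternative, but more laborious, since one gains nothing from the reuse of the already-computed first fundamental form.

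The gap is the assertion that ``the term $a^j Z_j$ contributes only to the $L$, $Q^{(2)}$ and $\mathcal O(\rho^4)$ remainders after the relevant rescalings,'' so that one may replace $\mathring{\mathcal M}$ by $\mathcal C-\Upsilon$. This is false. From \eqref{eq.Morthogonal} one has $a^k\mathring g_{kl}=-\rho\bigl(Rw_l+g(\Theta_l,Y)\bigr)+\mathcal O(\rho^3)$, so
\begin{equation*}
a^k G(Z_k,\nabla_{Z_i}Z_j)=a^k\mathring g_{kl}\,\mathring\Gamma^l_{ij}
=-\rho\bigl(Rw_l+g(\Theta_l,Y)\bigr)\Gamma^l_{ij}+\rho\, Q^{(2)}(w,Y)+\mathcal O(\rho^3),
\end{equation*}
which after the $R^{-1}$ normalization is a contribution of size $\rho L(w,Y)$, the \emph{same} size as the displayed $\rho\,\mathrm{Hess}^\Sigma w$ term, not of size $\rho^3 L(w,Y)$. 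Meanwhile, the raw pairing $G(\partial_i Z_j,\mathcal C-\Upsilon)$ yields $\rho R\,w_{ij}$ from the $\tfrac{w_{ij}}{R}(\mathcal C-\Upsilon)$ component of $\partial_i Z_j$, plus an extra non-tensorial $\tfrac{\rho}{R}\Gamma^l_{ij}g(Y,\Theta_l)$ coming from the normal part of $\Omega_{ij}$ (one has $\scal{Y_{ij}}{C-\Theta}=g(\nabla^\Sigma_i Y,\Theta_j)+g(\nabla^\Sigma_j Y,\Theta_i)+\Gamma^l_{ij}g(Y,\Theta_l)$ at leading order). Only after adding the $a^kZ_k$ contribution do the Christoffel pieces combine into the covariant Hessian $\rho(w_{ij}-\Gamma^l_{ij}w_l)=\rho(\mathrm{Hess}^\Sigma w)_{ij}$ and does the spurious $\Gamma^l_{ij}g(Y,\Theta_l)$ cancel. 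So the tangential piece $a^kZ_k$ is not a remainder: it supplies the Christoffel correction that makes the leading expansion tensorial, and must be computed explicitly (as the paper does by differentiating $a^k\mathring g_{kj}$). Once you incorporate it, your approach should reproduce the theorem.
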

  \begin{proof}
    Define $\mathring{k}_{i,j} \coloneqq - G(\nabla_{Z_i} \mathring{ \mathcal{M}}, Z_j) = G(\nabla_{Z_i} (\Upsilon-\mathcal{C}), Z_j) - G(\nabla_{Z_i} (a^k Z_k) , Z_j)$. We treat the two summands separately. 
    For the second summand we recall \eqref{eq.Morthogonal}: 
    \begin{align*}
      &a^k G(Z_k,Z_j) = a^k \mathring{g}_{k,j} = -\rho w_j R - \rho g(Y, \Theta_j) -\tfrac{\rho^3}{3} \scal{\riem{\Theta}{C}{\Theta}}{\Theta_j} +\rho^3 L(w,Y){+\rho^3 Q^{(2)}(w,Y)}+ \mathcal{O}(\rho^4);
    \end{align*}
    applying to it $\partial_{z^i}$ and using the compatibility of the metric:
    \begin{align*}
      &G(\nabla_{Z_i} (a^k Z_k) , Z_j) + a^k G(Z_k,\nabla_{Z_i} Z_j) = \partial_{z^i} \Big( G( a^k Z_k,Z_j) \Big) =-\rho w_{i,j} R - \rho g(\nabla_{i,j}^\Sigma \Theta,Y)-\rho g(\nabla_i^{\Sigma} Y,\Theta_j) \\
&-\tfrac{\rho^3}{3} \mathscr{T}_{i,j}(\Theta,C)+\rho^3 L_p(w,Y) {+\rho^3 Q^{(2)}(w,Y)}+ \mathcal{O}(\rho^4),
    \end{align*}
    where $\mathscr{T}_{i,j}(\Theta,C) \coloneqq \scal{\riem{\Theta_i}{C}{\Theta}}{\Theta_j} + \scal{\riem{\Theta}{C}{\Theta_i}}{\Theta_j} + \scal{\riem{\Theta}{C}{\Theta}}{ \Theta_{i,j}}$. This holds if and only if (remark that the ambient connection extends the tangential one)
    \begin{align*}
      &-G(\nabla_{Z_i} (a^k Z_k) , Z_j) = a^k G(Z_k,\nabla_{Z_i} Z_j)+ \rho w_{i,j} R +\rho g(\nabla_{i,j}^\Sigma \Theta,Y)+\rho g(\nabla_i^{\Sigma} Y,\Theta_j)+ \tfrac{\rho^3}{3} \mathscr{T}_{i,j}(\Theta,C)\\
      &+ \rho^3 L(w,Y) {+\rho^3 Q^{(2)}(w,Y)}+ \mathcal{O}(\rho^4) =  a^k G(Z_k, \mathring{\Gamma}^l_{i,j} Z_l)+ \rho w_{i,j} R +\rho g(\nabla_{i,j}^\Sigma \Theta,Y)+\rho g(\nabla_i^{\Sigma} Y,\Theta_j)\\
&+ \tfrac{\rho^3}{3} \mathscr{T}_{i,j}(\Theta,C)+ \rho^3 L(w,Y) {+\rho^3 Q^{(2)}(w,Y)}+ \mathcal{O}(\rho^4) = a^k \mathring{g}_{k,l} \mathring{\Gamma}^l_{i,j}+ \rho w_{i,j} R +\rho g(\nabla_{i,j}^\Sigma \Theta,Y)+\rho g(\nabla_i^{\Sigma} Y,\Theta_j)\\
&+ \tfrac{\rho^3}{3} \mathscr{T}_{i,j}(\Theta,C)+ \rho^3 L(w,Y) {+\rho^3 Q^{(2)}(w,Y)}+ \mathcal{O}(\rho^4) = \Big( -\rho w_l R -\rho g(\Theta_l,Y) - \tfrac{\rho^3}{3} \scal{\riem{\Theta}{C}{\Theta}}{\Theta_l} \\
&+\rho^3 L(w,Y){+\rho^3 Q^{(2)}(w,Y)}+\mathcal{O}(\rho^4) \Big) \mathring{\Gamma}^l_{i,j}+ \rho w_{i,j} R +\rho g(\nabla_{i,j}^\Sigma \Theta,Y)+\rho g(\nabla_i^{\Sigma} Y,\Theta_j)+ \tfrac{\rho^3}{3} \mathscr{T}_{i,j}(\Theta,C)\\
&+ \rho^3 L(w,Y) {+\rho^3 Q^{(2)}(w,Y)}+ \mathcal{O}(\rho^4) =\rho g(\nabla_i^{\Sigma} Y,\Theta_j)+ \rho R \Big( w_{i,j} -  \mathring{\Gamma}^l_{i,j} w_l \Big)+ \rho 	\scal{\nabla_{i,j}^\Sigma \Theta-\mathring{\Gamma}^l_{i,j} \Theta_l}{Y} \\
&+ \tfrac{\rho^3}{3} \mathscr{T}'_{i,j}(\Theta,C)+ \rho^3 L_p(w,Y) {+\rho^3 Q^{(2)}(w,Y)}+ \mathcal{O}(\rho^4), 
 \end{align*}
where $\mathscr{T}_{i,j}'(\Theta,C)$ is given by
\begin{equation}
\scal{\riem{\Theta_i}{C}{\Theta}}{\Theta_j} + \scal{\riem{\Theta}{C}{\Theta_i}}{\Theta_j} + \scal{\riem{\Theta}{C}{\Theta}}{ \Theta_{i,j}} - \scal{\riem{\Theta}{C}{\Theta}}{\Theta_l} \mathring{\Gamma}^l_{i,j}.
\end{equation}
For the first summand, following the argument in \cite{pac}, one can connect it to a ``radial" derivative of the metric. In their case $\partial_\rho \mathring{g}_{i,j}$ furnishes the desired result, since the sphere under interest is centered at the origin. We instead choose to consider a direction radial with respect to the center $C$, since this will substantially simplify the calculations: the approaches are equivalent. Let us define 
    \begin{equation*}
      \tilde G (s,z) \coloneqq \Esp \bigg(\rho \Big(C+s\big(\Theta(z)-C+w(z)N(\Theta(z)) + Y(\Theta(z)) \big)\Big) \bigg),
    \end{equation*}
for $s \in (0,1]$, and notice that $\Sigma_{(p,\s),\rho}(w,Y)= \tilde G (1,\Sigma)$. The radial vector we consider is
    \begin{align*}
      Z_0 & \coloneqq d \tilde G(s,z) [\partial_s]  \mid_{s=1}= d(\Esp)[\Theta-C+ wN +Y]= d(\Esp)[ \rho ((1- \tfrac{w}{R}) (\Theta-C) +Y)] \\
      &= \rho[(1-\tfrac{w}{R}) (\Upsilon-\mathcal{C}) + \Omega].
    \end{align*}
Notice that
\begin{equation}
[Z_0,Z_i]=\tilde G_* ([\partial_s,\partial_{z^i}])=0.
\end{equation}
    By the previous analysis: 
    \begin{align*}
      \mathring{k}_{i,j} =& G(\nabla_{Z_i} (\Upsilon- \mathcal{C}), Z_j) +\rho g(\nabla_i^{\Sigma} Y,\Theta_j)+ \rho R \Big( w_{i,j} -  \mathring{\Gamma}^l_{i,j} w_l \Big)+ \rho \scal{\nabla_{i,j}^\Sigma \Theta-\mathring{\Gamma}^l_{i,j} \Theta_l}{Y} + \tfrac{\rho^3}{3} \mathscr{T}'_{i,j}(\Theta,C)\\
&+ \rho^3 L_p(w,Y) {+\rho^3 Q^{(2)}(w,Y)}+ \mathcal{O}(\rho^4).
    \end{align*}
     It is easy to prove that $\mathring{k}_{i,j} = \mathring{k}_{j,i}$, since  we need only the orthogonality of $\mathring{ \mathcal{M}}$ and $T \Sigma_{p,\rho}(w,Y)$; moreover, the Christoffel symbols are symmetric (the induced connection is Levi-Civita), hence the first summand on the right-hand side of the latter equation must compensate the presence of the asymmetric summands.  In order to see how, we compute (we use $[Z_i,Z_j]=0$ for every $i,j=0,...,m$):
\begin{small}
\begin{align*}
&G(\nabla_{Z_i} Z_0, Z_j)- G(Z_i,\nabla_{Z_j} Z_0)=\partial_i (G(Z_0,Z_j))-\partial_j (G(Z_i,Z_0))-G(Z_0,\nabla_{Z_i} Z_j)+G(\nabla_{Z_j}Z_i, Z_0) \\
&=\rho \partial_i (G((1-\tfrac{w}{R}) (\Upsilon-\mathcal{C}) + \Omega,Z_j))-\rho \partial_j (G(Z_i,(1- \tfrac{w}{R}) (\Upsilon-\mathcal{C}) + \Omega)\\
&=\rho^2 \partial_i [(1-\tfrac{w}{R})\scal{\Theta-C}{(1-\tfrac{w}{R}) \Theta_j+\tfrac{w_j}{R}(C-\Theta)+Y_j}+g(Y,(1-\tfrac{w}{R})\Theta_j)+Q^{(2)}(w,Y)-\tfrac{\rho^2}{3} \scal{\riem{\Theta}{C}{\Theta}}{\Theta_j}]\\
&-\rho^2 \partial_j [(1-\tfrac{w}{R})\scal{(1-\tfrac{w}{R}) \Theta_i+\tfrac{w_i}{R}(C-\Theta)+Y_i}{\Theta-C}+g((1-\tfrac{w}{R})\Theta_i,Y)+Q^{(2)}(w,Y)-\tfrac{\rho^3}{3} \scal{\riem{\Theta}{\Theta_i}{\Theta}}{C}]\\
&+\rho^4 L(w,Y)+\mathcal{O}(\rho^5)=\tfrac{\rho^4}{3}(\mathscr{T}_{j,i}(\Theta,C)-\mathscr{T}_{i,j}(\Theta,C))+\rho^4 L(w,Y)+\rho^2Q^{(2)}(w,Y)+\mathcal{O}(\rho^5).
\end{align*}
\end{small}
We now need to compute the derivative in $s$ of the metric. In order to do so, we first need to explicit the first fundamental form of $\tilde G(s,\Sigma)$, and then differentiate at $s=1$. With a calculation along the lines to the one we developed in the previous subsection, we obtain
\begin{small}
\begin{align*}
\mathring{g}_{i,j}(s)=& \rho^2 s^2  [(1-\tfrac{w}{R})^2 g_{i,j}+g(\nabla_i^{\Sigma} Y,\Theta_j)+g(\Theta_i,\nabla_j^{\Sigma} Y)+\tfrac{\rho^2}{3}(s^2 \scal{\riem{\Theta}{\Theta_i}{\Theta}}{\Theta_j}+s(1-s)\scal{\riem{\Theta}{\Theta_i}{C}}{\Theta_j}\\
&+s(1-s)\scal{\riem{C}{\Theta_i}{\Theta}}{\Theta_j}+(1-s)^2\scal{\riem{C}{\Theta_i}{C}}{\Theta_j})]+\rho^4 L(w,Y)+\rho^2Q^{(2)}(w,Y)+\mathcal{O}(\rho^5).
\end{align*}
\end{small}
Taking the $s-$derivative and evaluating at $s=1$ we get
\begin{small}
\begin{align*}
\partial_s \mathring{g}_{i,j}(s)\mid_{s=1}=&2 \rho^2  [(1-\tfrac{w}{R})^2 g_{i,j}+g(\nabla_i^{\Sigma} Y,\Theta_j)+g(\Theta_i,\nabla_j^{\Sigma} Y)+\tfrac{\rho^2}{3}\scal{\riem{\Theta}{\Theta_i}{\Theta}}{\Theta_j}]+\rho^4 L(w,Y)+\rho^2Q^{(2)}(w,Y)\\
&+\mathcal{O}(\rho^5)+\tfrac{\rho^4}{3}[2 \scal{\riem{\Theta}{\Theta_i}{\Theta}}{\Theta_j}-\scal{\riem{\Theta}{\Theta_i}{C}}{\Theta_j}-\scal{\riem{C}{\Theta_i}{\Theta}}{\Theta_j}].
\end{align*}
\end{small}
 We can now rewrite the derivative in $s$ of the metric as follows
\begin{small}
     \begin{align*}
       \partial_s \mathring{g}_{i,j}(s)\mid_{s=1} =& \partial_s \big( G(Z_i,Z_j) \big)(s)\mid_{s=1}= G(\nabla_{Z_0} Z_i , Z_j) + G(Z_i , \nabla_{Z_0} Z_j) =G(\nabla_{Z_i} Z_0 , Z_j) + G(Z_i , \nabla_{Z_j} Z_0)\\
&=2G(\nabla_{Z_i} Z_0 , Z_j) -\tfrac{\rho^4}{3}(\mathscr{T}_{j,i}(\Theta,C)-\mathscr{T}_{i,j}(\Theta,C))+\rho^4 L(w,Y)+\rho^2 Q^{(2)}(w,Y)+\mathcal{O}(\rho^5)\\
&=2\partial_i (G(Z_0 , Z_j) )-2G(Z_0,\nabla_{Z_i} Z_j) -\tfrac{\rho^4}{3}(\mathscr{T}_{j,i}(\Theta,C)-\mathscr{T}_{i,j}(\Theta,C))+\rho^4 L(w,Y)+\rho^2 Q^{(2)}(w,Y)\\
&+\mathcal{O}(\rho^5)=2\rho (1-\tfrac{w}{R}) G(\nabla_{Z_i} (\Upsilon- \mathcal{C}), Z_j) -2 \rho \tfrac{w_i}{R}G(\Upsilon-\mathcal{C},Z_j)+2 \rho \partial_i(G(\Omega,Z_j))\\
&-2 \rho G(\Omega,\nabla_{Z_i} Z_j)-\tfrac{\rho^4}{3}(\mathscr{T}_{j,i}(\Theta,C)-\mathscr{T}_{i,j}(\Theta,C))+\rho^4 L(w,Y)+\rho^2 Q^{(2)}(w,Y)+\mathcal{O}(\rho^5)\\
&=2\rho (1-\tfrac{w}{R}) G(\nabla_{Z_i} (\Upsilon- \mathcal{C}), Z_j)+2 \rho^2 \partial_i(g(Y,\Theta_j))-2 \rho^2 \mathring{\Gamma}^l_{i,j} g(Y,\Theta_l)-\tfrac{\rho^4}{3}(\mathscr{T}_{j,i}(\Theta,C)-\mathscr{T}_{i,j}(\Theta,C))\\
&+\rho^4 L(w,Y)+\rho^2 Q^{(2)}(w,Y)+\mathcal{O}(\rho^5)=2\rho (1-\tfrac{w}{R}) G(\nabla_{Z_i} (\Upsilon- \mathcal{C}), Z_j)+2\rho^2g(\nabla_i^{\Sigma} Y,\Theta_j)\\
&+2 \rho^2 \scal{Y}{\Theta_{i,j}^{\Sigma}-\mathring{\Gamma}^l_{i,j} \Theta_l}-\tfrac{\rho^4}{3}(\mathscr{T}_{j,i}(\Theta,C)-\mathscr{T}_{i,j}(\Theta,C))+\rho^4 L(w,Y)+\rho^2 Q^{(2)}(w,Y)+\mathcal{O}(\rho^5).
     \end{align*}
\end{small}
We can now rearrange this identity and use the expansion we got before for the derivative of the metric to obtain
\begin{small}
\begin{align*}
&G(\nabla_{Z_i} (\Upsilon- \mathcal{C}), Z_j)=\tfrac{1}{2}\rho^{-1}(1-\tfrac{w}{R})^{-1} \bigg\{ \partial_s \mathring{g}_{i,j}(s)\mid_{s=1}-[2\rho^2g(\nabla_i^{\Sigma} Y,\Theta_j)+2 \rho^2 \scal{Y}{\Theta_{i,j}^{\Sigma}- \mathring{\Gamma}^l_{i,j} \Theta_l}\\
&-\tfrac{\rho^4}{3}(\mathscr{T}_{j,i}(\Theta,C)-\mathscr{T}_{i,j}(\Theta,C))+\rho^4 L(w,Y)+\rho^2 Q^{(2)}(w,Y)+\mathcal{O}(\rho^5)] \bigg\}\\
&=\tfrac{1}{2}\rho^{-1}(1-\tfrac{w}{R})^{-1}\bigg\{ 2 \rho^2(1-\tfrac{w}{R})^2 g_{i,j}+2 \rho^2 g(\Theta_i,\nabla_j^{\Sigma} Y)+\tfrac{\rho^4}{3}(4 \scal{\riem{\Theta}{\Theta_i}{\Theta}}{\Theta_j}-\scal{\riem{\Theta}{\Theta_i}{C}}{\Theta_j}\\
&-\scal{\riem{C}{\Theta_i}{\Theta}}{\Theta_j}) -2 \rho^2 \scal{Y}{\Theta_{i,j}^{\Sigma}-\mathring{\Gamma}^l_{i,j} \Theta_l}+\tfrac{\rho^4}{3}(\mathscr{T}_{j,i}(\Theta,C)-\mathscr{T}_{i,j}(\Theta,C))+\rho^4 L(w,Y)+\rho^2 Q^{(2)}(w,Y)+\mathcal{O}(\rho^5) \bigg\}\\
&= \rho(1-\tfrac{w}{R}) g_{i,j}+\rho g(\Theta_i,\nabla_j^{\Sigma} Y)- \rho \scal{Y}{\Theta_{i,j}^{\Sigma}-\mathring{\Gamma}^l_{i,j} \Theta_l}+\tfrac{\rho^3}{6}[4 \scal{\riem{\Theta}{\Theta_i}{\Theta}}{\Theta_j}-\scal{\riem{\Theta}{\Theta_i}{C}}{\Theta_j}\\
&-\scal{\riem{C}{\Theta_i}{\Theta}}{\Theta_j}+ \mathscr{T}_{j,i}(\Theta,C)-\mathscr{T}_{i,j}(\Theta,C) ]+\rho^3 L(w,Y)+\rho Q^{(2)}(w,Y)+\mathcal{O}(\rho^4).
\end{align*}
\end{small}
Resuming, we get
\begin{align*}
&\mathring{k}_{i,j}= G(\nabla_{Z_i} (\Upsilon-\mathcal{C}), Z_j) - G(\nabla_{Z_i} (a^k Z_k) , Z_j)=\rho(1-\tfrac{w}{R}) g_{i,j}+\rho g(\Theta_i,\nabla_j^{\Sigma} Y)- \rho \scal{Y}{\Theta_{i,j}^{\Sigma}-\mathring{\Gamma}^l_{i,j} \Theta_l}\\
&+\tfrac{\rho^3}{6}[4 \scal{\riem{\Theta}{\Theta_i}{\Theta}}{\Theta_j}-\scal{\riem{\Theta}{\Theta_i}{C}}{\Theta_j}-\scal{\riem{C}{\Theta_i}{\Theta}}{\Theta_j}+ \mathscr{T}_{j,i}(\Theta,C)-\mathscr{T}_{i,j}(\Theta,C) ]\\
&+\rho g(\nabla_i^{\Sigma} Y,\Theta_j)+ \rho R \Big( w_{i,j} -  \mathring{\Gamma}^l_{i,j} w_l \Big)+ \rho \scal{\nabla_{i,j}^\Sigma \Theta-\mathring{\Gamma}^l_{i,j} \Theta_l}{Y} + \tfrac{\rho^3}{3} [\mathscr{T}_{i,j}(\Theta,C)-\scal{\riem{\Theta}{C}{\Theta}}{\Theta_l}\mathring{\Gamma}^l_{i,j}]\\
&+\rho^3 L(w,Y)+\rho Q^{(2)}(w,Y)+\mathcal{O}(\rho^4)=\rho(1-\tfrac{w}{R}) g_{i,j}+\rho g(\Theta_i,\nabla_j^{\Sigma} Y)+\rho g(\nabla_i^{\Sigma} Y,\Theta_j)+ \rho R \Big( w_{i,j} -  \mathring{\Gamma}^l_{i,j} w_l \Big)\\
&+\tfrac{\rho^3}{6}[4 \scal{\riem{\Theta}{\Theta_i}{\Theta}}{\Theta_j}-\scal{\riem{\Theta}{\Theta_i}{C}}{\Theta_j}-\scal{\riem{C}{\Theta_i}{\Theta}}{\Theta_j}+ \mathscr{T}_{i,j}(\Theta,C)+\mathscr{T}_{j,i}(\Theta,C) \\
&-2\scal{\riem{\Theta}{C}{\Theta}}{\Theta_l}\mathring{\Gamma}^l_{i,j}]+\rho^3 L(w,Y)+\rho Q^{(2)}(w,Y)+\mathcal{O}(\rho^4).
\end{align*}
Employing the definition of $\mathscr{T}_{i,j}(\Theta,C)$ and the basic symmetries of the Riemann tensor, we get that the term in front of $\tfrac{\rho^3}{6}$ is given by
\begin{equation*}
\mathscr{T}^{''}_{i,j}(\Theta,C):=4 \scal{\riem{\Theta}{\Theta_i}{\Theta}}{\Theta_j}-2\scal{\riem{C}{\Theta_i}{\Theta}}{\Theta_j}-2\scal{\riem{\Theta}{\Theta_i}{C}}{\Theta_j}+2\scal{\riem{\Theta}{C}{\Theta}}{\Theta_{i,j}^{\Sigma}-\Theta_l\mathring{\Gamma}^l_{i,j}}.
\end{equation*}
We now  have to connect the Christoffel's symbols. We get: 
\begin{small}
     \begin{align*}
       &\mathring{\Gamma}^k_{i,j}=\tfrac{1}{2} \mathring{g}^{k,m} \Big( \tfrac{\partial \mathring{g}_{m,i}}{\partial z_j} + \tfrac{\partial \mathring{g}_{m,j}}{\partial z_i} - \tfrac{\partial \mathring{g}_{i,j}}{\partial z_m} \Big)= \tfrac{1}{2} \mathring{g}^{k,m} \rho^2 \biggl( \tfrac{\partial}{\partial z_j} \Big( (1-\tfrac{w}{R})^2 \Big[ g(\Theta_m,\Theta_i) +L(w,Y){+Q^{(2)}(w,Y)} \Big]+ \mathcal{O}(\rho^2) \Big) \\
       &+ \tfrac{\partial}{\partial z_i} \Big( (1-\tfrac{w}{R})^2 \Big[ g(\Theta_m,\Theta_j) +L(w,Y) {+Q^{(2)}(w,Y)}\Big]+ \mathcal{O}(\rho^2) \Big)\\
&-\tfrac{\partial}{\partial z_m} \Big( (1-\tfrac{w}{R})^2 \Big[ g(\Theta_i,\Theta_j) +L(w,Y) {+Q^{(2)}(w,Y)}\Big]+ \mathcal{O}(\rho^2) \Big) \biggr) \\
&=\tfrac{1}{2} \mathring{g}^{k,m} \rho^2 (1-\tfrac{w}{R})^2 \Big[ \Big( \tfrac{\partial}{\partial z_j} (g(\Theta_m,\Theta_i)) +\tfrac{\partial}{\partial z_i} (g(\Theta_m,\Theta_j))-\tfrac{\partial}{\partial z_m} (g(\Theta_i,\Theta_j)) \Big) + L(w,Y) {+Q^{(2)}(w,Y)}+ \mathcal{O}(\rho^2) \Big].
     \end{align*}
\end{small}
     Observe that $\mathring{g}^{k,m} = (1-\tfrac{w}{R})^{-2} \rho^{-2} g^{k,m} + \rho^{-2}L_p(w,Y) {+\rho^{-2} Q^{(2)}(w,Y)}+ \mathcal{O}(1)$ by \eqref{eq.inversemetric}, so we easily deduce
\begin{small}
     \begin{align*}
\mathring{\Gamma}^k_{i,j}=&\tfrac{1}{2} [ (1-\tfrac{w}{R})^{-2} \rho^{-2} g^{k,m} + \rho^{-2}L_p(w,Y) {+\rho^{-2} Q^{(2)}(w,Y)}+ \mathcal{O}(1)] \rho^2 (1-\tfrac{w}{R})^2 \Big[ \Big( \tfrac{\partial}{\partial z_j} (g(\Theta_m,\Theta_i)) +\tfrac{\partial}{\partial z_i} (g(\Theta_m,\Theta_j))\\
&-\tfrac{\partial}{\partial z_m} (g(\Theta_i,\Theta_j)) \Big)+ L(w,Y) {+ Q^{(2)}(w,Y)}+ \mathcal{O}(\rho^2) \Big]=\Gamma^k_{i,j}+ L(w,Y) {+ Q^{(2)}(w,Y)}+ \mathcal{O}(\rho^2).
     \end{align*}
\end{small}
Inserting in the above expression  we get
     \begin{align*}
\mathring{k}_{i,j} =& G(\nabla_{Z_i} (\Upsilon-C), Z_j) - G(\nabla_{Z_i} (a^k Z_k) , Z_j)=\rho(1-\tfrac{w}{R}) g_{i,j}+\rho g(\Theta_i,\nabla_j^{\Sigma} Y)+\rho g(\nabla_i^{\Sigma} Y,\Theta_j)  \\ & + \rho R (Hess^{\Sigma} w)_{i,j}
+\tfrac{\rho^3}{6}\mathscr{T}^{'''}_{i,j}(\Theta,C)+\rho^3 L(w,Y)+\rho Q^{(2)}(w,Y)+\mathcal{O}(\rho^4),
     \end{align*}
where, using Weingarten's relation and since the second fundamental form of $\Sigma$ with respect to the normal vector $N=\tfrac{1}{R}(C-\Theta)$ is $\tfrac{1}{R}g$, we have set
\begin{small}
\begin{align*}
\mathscr{T}^{'''}_{i,j}(\Theta,C)&:=4 \scal{\riem{\Theta}{\Theta_i}{\Theta}}{\Theta_j}-2\scal{\riem{C}{\Theta_i}{\Theta}}{\Theta_j}-2\scal{\riem{\Theta}{\Theta_i}{C}}{\Theta_j}+2\scal{\riem{\Theta}{C}{\Theta}}{(Hess^{\Sigma} \Theta)_{i,j}}\\
&=4 \scal{\riem{\Theta}{\Theta_i}{\Theta}}{\Theta_j}-2\scal{\riem{C}{\Theta_i}{\Theta}}{\Theta_j}-2\scal{\riem{\Theta}{\Theta_i}{C}}{\Theta_j}+2 R^{-2}\scal{\riem{\Theta}{C}{\Theta}}{C}g_{i,j}.
\end{align*}     
\end{small}
To conclude the proof, we recall that 
     \begin{equation*}
       G(\mathring{ \mathcal{M}},\mathring{ \mathcal{M}})^{-\tfrac{1}{2}} = \tfrac{1}{R} - \tfrac{\rho^2}{6 R^3} \scal{\riem{\Theta}{C}{\Theta}}{C} +\rho^2 L(w,Y) +Q^{(2)}(w,Y) + \mathcal{O}(\rho^3);
     \end{equation*}     
     remark that the expansion of $\mathring{k}_{i,j}$ remains unchanged when multiplying by the last three terms of the latter quantity, and changes a little  with the first. Moreover, for the multiplication by the second term, we can see that $\mathring{k}_{i,j} = \rho g(\Theta_i,\Theta_j) + \rho L(w,Y) {+\rho Q^{(2)}(w,Y)}+ \mathcal{O}(\rho^3)$, so we easily get
\begin{small}
     \begin{align*}
 \mathring{h}_{i,j} =& G(\mathring{ \mathcal{M}},\mathring{ \mathcal{M}})^{-\tfrac{1}{2}} \mathring{k}_{i,j} =[\tfrac{1}{R} - \tfrac{\rho^2}{6 R^3} \scal{\riem{\Theta}{C}{\Theta}}{C} +\rho^2 L(w,Y) +Q^{(2)}(w,Y) + \mathcal{O}(\rho^3)] \cdot [\rho(1-\tfrac{w}{R}) g_{i,j}\\
&+\rho g(\Theta_i,\nabla_j^{\Sigma} Y)+\rho g(\nabla_i^{\Sigma} Y,\Theta_j)+ \rho R (Hess^{\Sigma} w)_{i,j}+\tfrac{\rho^3}{6}\mathscr{T}^{'''}_{i,j}(\Theta,C)+\rho^3 L(w,Y)+\rho Q^{(2)}(w,Y)+\mathcal{O}(\rho^4)]\\
=&\tfrac{\rho}{R}(1-\tfrac{w}{R}) g_{i,j}+\tfrac{\rho}{R}( g(\Theta_i,\nabla_j^{\Sigma} Y)+g(\nabla_i^{\Sigma} Y,\Theta_j))+ \rho (Hess^{\Sigma} w)_{i,j}+\tfrac{\rho^3}{6R}\mathscr{S}_{i,j}(\Theta,C)+\rho^3 L(w,Y)\\
&+\rho Q^{(2)}(w,Y)+\mathcal{O}(\rho^4),
     \end{align*}
\end{small}
where
\begin{equation*}
\resizebox{0.92\hsize}{!}{ $\mathscr{S}_{i,j}(\Theta,C):=4 \scal{\riem{\Theta}{\Theta_i}{\Theta}}{\Theta_j}-2\scal{\riem{C}{\Theta_i}{\Theta}}{\Theta_j}-2\scal{\riem{\Theta}{\Theta_i}{C}}{\Theta_j}+ R^{-2}\scal{\riem{\Theta}{C}{\Theta}}{C} g_{i,j},$ }
\end{equation*}
exactly as in \eqref{eq.definitionS}, concluding the proof.    
  \end{proof}
As already noticed before, the tangential component of the perturbation appears at  first order as a Lie derivative in the expansion of the second fundamental form, exactly as expected. Moreover, we can recover the expansion in Lemma $2.3$ of \cite{pac} by setting $C=0$, $R=1$ and $Y=0$.

Finally, the mean curvature of the perturbed manifold $\Sigma_{(p,\s),\rho}(w,Y)$ is obtained taking the trace of its second fundamental form. We 
first recall our convention from Remark \ref{rem.Ogeometric}. 

\begin{theorem}[Perturbed Mean Curvature - Spherical Case]\label{th.Hperturbed}
We have the following expansion for the mean curvature $\mathring{H}$ of $\Sigma_{(p,\s),\rho}(w,Y)$:
\begin{equation}\label{eq.Hperturbed}
\begin{aligned}
\rho R \mathring{H} =&m+(R\Delta_{\Sigma}w+\tfrac{m}{R}w)+\tfrac{\rho^2}{3}\Ric(\Theta,\Theta) - \tfrac{2 \rho^2}{3} \Ric(C,\Theta) + \tfrac{\rho^2 m}{6 R^2} \scal{\riem{\Theta}{C}{\Theta}}{C}\\
&+ \rho^2 L(w,Y) + Q^{(2)}(w,Y) + \mathcal{O}(\rho^3).
    \end{aligned}
\end{equation}

  \begin{proof}
 From \eqref{eq.inversemetric} and \eqref{eq.perturbedsecond} we deduce
\begin{small}
  \begin{align*}
  \rho R \mathring{H}=&\rho \mathring{g}^{i,j} R \mathring{h}_{i,j}= \Big[  \rho^{-1} (1-\tfrac{w}{R})^{-2} g^{i,j}+\rho^{-1}Q^{(2)}(w,Y) -\rho^{-1} g^{i,k}(g(\nabla_k^{\Sigma} Y,\Theta_m)+g(\Theta_k,\nabla_m^{\Sigma}Y))g^{m,j}\\
  &-\tfrac{\rho}{3}g^{i,k} \scal{\riem{\Theta}{\Theta_k}{\Theta}}{\Theta_m}g^{m,j}+\rho L(w,Y)+\mathcal{O}(\rho^2) \Big]\cdot \Big[ \rho(1-\tfrac{w}{R}) g_{i,j}+\rho( g(\Theta_i,\nabla_j^{\Sigma} Y)+g(\nabla_i^{\Sigma} Y,\Theta_j))\\
&+ \rho R (Hess^{\Sigma} w)_{i,j}+\tfrac{\rho^3}{6}\mathscr{S}_{i,j}(\Theta,C)+\rho^3 L(w,Y)+\rho Q^{(2)}(w,Y)+\mathcal{O}(\rho^4) \Big]=m+(R\Delta_{\Sigma}w+\tfrac{m}{R}w)\\
&+\tfrac{\rho^2}{3}\Ric(\Theta,\Theta) - \tfrac{2 \rho^2}{3} \Ric(C,\Theta) + \tfrac{\rho^2 m}{6 R^2} \scal{\riem{\Theta}{C}{\Theta}}{C}+ \rho^2 L(w,Y) + Q^{(2)}(w,Y) + \mathcal{O}(\rho^3), 
\end{align*}
\end{small}
yielding the desired result. 
  \end{proof}
\end{theorem}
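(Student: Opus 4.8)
The plan is to read off $\mathring{H}$ as the metric trace of the second fundamental form, $\mathring{H}=\mathring{g}^{i,j}\mathring{h}_{i,j}$, and to substitute into this identity the two expansions already at hand: the one for the inverse first fundamental form in \eqref{eq.inversemetric} and the one for $\mathring{h}_{i,j}$ from Theorem \ref{th.perturbedsecond}. Multiplying $\rho R$ by the product of these two series and then sorting the outcome by powers of $\rho$ and by the $L$, $Q^{(a)}$ bookkeeping is essentially the whole proof; I expect the only delicate points to be this bookkeeping together with one genuinely geometric tensor computation.

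First I would extract the leading behaviour. Contracting the principal term $\rho^{-2}(1-\tfrac{w}{R})^{-2}g^{i,j}$ of $\mathring{g}^{i,j}$ against the principal term $\tfrac{\rho}{R}(1-\tfrac{w}{R})g_{i,j}$ of $\mathring{h}_{i,j}$ gives, after multiplication by $\rho R$ and expansion of $(1-\tfrac{w}{R})^{-1}$, the constant $m$ and the term $\tfrac{m}{R}w$ (the higher powers of $w$ being absorbed into $Q^{(2)}(w,Y)$); contracting the same principal inverse metric against $\rho\,\Hess^{\Sigma}w$ gives $R\Delta_{\Sigma}w$. So these pieces assemble precisely into the linearized mean-curvature operator $R\Delta_{\Sigma}w+\tfrac{m}{R}w$, the Jacobi operator of the round sphere of radius $R$ rescaled by $R$. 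Next I would check that the tangential field $Y$ drops out at first order: the terms $\tfrac{\rho}{R}\big(g(\Theta_i,\nabla^{\Sigma}_j Y)+g(\nabla^{\Sigma}_i Y,\Theta_j)\big)$ in $\mathring{h}_{i,j}$ trace to $\tfrac{2}{R}\Div_{\Sigma}Y$, whereas the corresponding $Y$-correction in $\mathring{g}^{i,j}$, contracted against the principal $\mathring{h}_{i,j}$, gives $-\tfrac{2}{R}\Div_{\Sigma}Y$, and the two cancel — the analytic counterpart of the fact that $Y$ enters $\mathring{h}$ only through a Lie derivative. Hence $Y$ survives only inside terms of type $\rho^2 L(w,Y)$ and $Q^{(2)}(w,Y)$, in accordance with \eqref{eq.Hperturbed}.

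The one genuinely geometric step is the order-$\rho^2$ block, which receives exactly two contributions: the principal inverse metric contracted against $\tfrac{\rho^3}{6R}\mathscr{S}_{i,j}(\Theta,C)$, yielding $\tfrac{\rho^2}{6}g^{i,j}\mathscr{S}_{i,j}$, and the curvature correction $-\tfrac{1}{3}g^{i,k}\scal{\riem{\Theta}{\Theta_k}{\Theta}}{\Theta_m}g^{m,j}$ in $\mathring{g}^{i,j}$ contracted against the principal $\mathring{h}_{i,j}$, yielding $-\tfrac{\rho^2}{3}g^{k,m}\scal{\riem{\Theta}{\Theta_k}{\Theta}}{\Theta_m}$. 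To evaluate these contractions I would use that on a spherical cap inside $S(C,R)$ the unit normal is $\tfrac{1}{R}(C-\Theta)$, so that $\sum_{k,m}g^{k,m}\Theta_k\scal{\cdot}{\Theta_m}$ is the orthogonal projection onto $T_\Theta\Sigma=(\Theta-C)^\perp$; combined with the pair symmetries of $R_p$ and with the fact that $\Theta-C$ is normal to $\Sigma$ (so that, via $R_p(\Theta,\Theta)=0$, the purely $\Theta$-dependent pieces vanish), each metric contraction of a curvature term equals its full Euclidean trace — producing $\Ric(\Theta,\Theta)$, respectively $\Ric(C,\Theta)$ — minus its normal component, which produces a multiple of $\scal{\riem{\Theta}{C}{\Theta}}{C}$. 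Adding in the explicit $R^{-2}\scal{\riem{\Theta}{C}{\Theta}}{C}\,g_{i,j}$ summand of $\mathscr{S}$ then produces the curvature part of \eqref{eq.Hperturbed}; as a sanity check, setting $C=0$ collapses this block to the familiar $\tfrac{\rho^2}{3}\Ric(\Theta,\Theta)$ appearing in the expansions for CMC spheres.

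Everything else — higher powers of $(1-\tfrac{w}{R})$, products of $L$ against $L$ or against curvature factors, the $\mathcal{O}(\rho)$ and $\mathcal{O}(\rho^4)$ remainders, the $\rho^3 L(w,Y)$ and $\rho Q^{(2)}(w,Y)$ pieces — collapses, after multiplication by $\rho R$, into $\rho^2 L(w,Y)$, $Q^{(2)}(w,Y)$ or $\mathcal{O}(\rho^3)$ by the absorption conventions, which finishes the proof. The main obstacle is therefore organizational: being scrupulous about which subleading products are purely geometric, which are $L(w,Y)$ and which are $Q^{(2)}(w,Y)$, and verifying that the $\Div_{\Sigma}Y$ cancellation is exact; the single place where genuine care is needed is the curvature-trace identity over $T_\Theta\Sigma$ in the spherical case with nonzero center $C$.
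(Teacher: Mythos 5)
Your plan is precisely the paper's proof: the authors simply plug the inverse-metric expansion \eqref{eq.inversemetric} and the second-fundamental-form expansion \eqref{eq.perturbedsecond} into $\rho R\mathring{H}=\rho R\,\mathring{g}^{i,j}\mathring{h}_{i,j}$ and sort by powers of $\rho$ and the $L$/$Q^{(2)}$ bookkeeping, just as you describe. Your identification of the leading Jacobi-type operator, the exact cancellation of the $\Div_{\Sigma}Y$ contributions (so that $Y$ survives only in $\rho^2 L$ and $Q^{(2)}$ terms), and the partial-trace-over-$T_\Theta\Sigma$ computation for the curvature block all match what the paper does implicitly in its single displayed chain of equalities, and you correctly single out the curvature-trace step with nonzero center $C$ as the one spot requiring genuine care.
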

Notice that the tangential component $Y$ contributes only at high order, as anticipated above.
For what concerns the flat disk case, we obtain the following expansions for the second fundamental form and the mean curvature of the perturbed surface.
\begin{theorem}[Perturbed Extrinsic Geometry - Disk Case]\label{th.Hperturbeddisk}
The second fundamental form $\mathring{h}_{i,j}$ of the perturbed disk $\Sigma_{p,\rho}(w,Y)$ has the following expansion:
\begin{equation}\label{eq.hperturbeddisk}
   \begin{aligned}
      \mathring{h}_{i,j} =&\rho (Hess^{\Sigma} w)_{i,j}- \tfrac{\rho^3}{3} \Big( \scal{\riem{N}{\Theta_i}{\Theta}}{\Theta_j}+\scal{\riem{\Theta}{\Theta_i}{N}}{\Theta_j}\Big)\\
      &+ \rho^3 L(w,Y)+\rho Q^{(2)}(w,Y)+ \mathcal{O}(\rho^4).
   \end{aligned}
\end{equation}
Moreover, its mean curvature $\mathring{H}$ verifies
 \begin{equation}\label{eq.Hperturbeddisk}
 \rho \mathring{H}=\Delta_{\Sigma}w-\tfrac{2 \rho^2}{3}\Ric(\Theta,N)+\rho^2 L(w,Y)+Q^{(2)}(w,Y)+\mathcal{O}(\rho^3).
 \end{equation}
\end{theorem}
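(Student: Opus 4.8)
The plan is to imitate the proofs of Theorems~\ref{th.perturbedsecond} and~\ref{th.Hperturbed}, taking advantage of the fact that the disk case is genuinely simpler: the sheet $\Sigma^\sigma$ is flat, so its unit normal $N$ has constant coordinates $N^\mu$ in the frame $X_\mu$, its second fundamental form vanishes, and no center $C$ intervenes. Consequently the candidate inner normal is directly $\mathring{\mathcal M}=\mathcal N+a^jZ_j$ with $\mathcal N$ constant and the $a^j$ fixed by $G(Z_i,\mathring{\mathcal M})=0$ (already computed above), and the ``radial derivative'' device of the spherical case is not needed.

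First I would obtain the second fundamental form. Set $\mathring k_{i,j}:=-G(\nabla_{Z_i}\mathring{\mathcal M},Z_j)$ and split it as $-G(\nabla_{Z_i}\mathcal N,Z_j)-G(\nabla_{Z_i}(a^kZ_k),Z_j)$. For the first piece, since the $N^\mu$ are constant one has $\nabla_{Z_i}\mathcal N=Z_i^\mu N^\nu\Gamma^\lambda_{\mu,\nu}X_\lambda$, and in normal coordinates the ambient Christoffel symbols vanish at $p$ with leading term linear in $x$ and built from $R_p$; evaluating at $\Xi=\rho(\Theta+wN+Y)$ and pairing with $Z_j=\rho(\Upsilon_j+w_j\mathcal N+\Omega_j)$ produces a contribution of order $\rho^3$. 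Combined with the order-$\rho^3$ piece of the already-known expansion $a^k\mathring g_{k,j}=-\rho w_j-\tfrac{\rho^3}{3}\scal{\riem{\Theta}{\Theta_j}{\Theta}}{N}+\rho^3 L(w,Y)+\rho^3 Q^{(2)}(w,Y)+\mathcal{O}(\rho^4)$, and after symmetrizing in $i,j$ (legitimate since $\mathring k_{i,j}=\mathring k_{j,i}$, which uses only $\mathring{\mathcal M}\perp T\Sigma_{(p,\s),\rho}(w,Y)$), the $\rho^3$-terms should collapse to $-\tfrac{\rho^3}{3}\bigl(\scal{\riem{N}{\Theta_i}{\Theta}}{\Theta_j}+\scal{\riem{\Theta}{\Theta_i}{N}}{\Theta_j}\bigr)$ modulo $\rho^3 L(w,Y)+\mathcal{O}(\rho^4)$. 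For the second piece, I would differentiate that expansion in $z^i$, use metric compatibility together with $\nabla_{Z_i}Z_j=\mathring\Gamma^l_{i,j}Z_l$ modulo a normal vector (which pairs to zero with $Z_k$), and substitute $a^k\mathring g_{k,l}$ back in; this gives $-G(\nabla_{Z_i}(a^kZ_k),Z_j)=\rho\bigl(w_{i,j}-\mathring\Gamma^l_{i,j}w_l\bigr)+\rho^3 L(w,Y)+\rho Q^{(2)}(w,Y)+\mathcal{O}(\rho^4)$, and relating $\mathring\Gamma$ to the Christoffel symbols of the flat disk via \eqref{eq.inversemetricdisk} turns the parenthesis into $(Hess^{\Sigma}w)_{i,j}$ up to $L(w,Y)$. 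Since the leading term of $\mathring k_{i,j}$ is then already $\rho(Hess^{\Sigma}w)_{i,j}$ --- there is no order-$\rho$ purely geometric term, precisely because the flat disk is minimal --- multiplying by $G(\mathring{\mathcal M},\mathring{\mathcal M})^{-1/2}=1-\tfrac{\rho^2}{6}\scal{\riem{\Theta}{N}{\Theta}}{N}+\rho^2 L(w,Y)+Q^{(2)}(w,Y)+\mathcal{O}(\rho^3)$ leaves the expansion unchanged at the retained orders, giving \eqref{eq.hperturbeddisk}.

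Next I would trace to get the mean curvature, $\mathring H=\mathring g^{i,j}\mathring h_{i,j}$, using \eqref{eq.inversemetricdisk}: the leading pairing $\rho^{-2}\delta^{i,j}\cdot\rho(Hess^{\Sigma}w)_{i,j}$ produces $\rho^{-1}\Delta_{\Sigma}w$; pairing $\rho^{-2}\delta^{i,j}$ with the $\rho^3$-curvature term of $\mathring h_{i,j}$ gives $-\tfrac{\rho}{3}\delta^{i,j}\bigl(\scal{\riem{N}{\Theta_i}{\Theta}}{\Theta_j}+\scal{\riem{\Theta}{\Theta_i}{N}}{\Theta_j}\bigr)$, which by the symmetries of $R_p$ and the orthonormality of $\{\Theta_1,\dots,\Theta_m,N\}$ collapses to $-\tfrac{2\rho}{3}\Ric(\Theta,N)$; and since the leading term of $\mathring h_{i,j}$ is $w$-dependent, every other pairing (the $\mathcal{O}(\rho^0)$ and $Q^{(2)}$-parts of $\mathring g^{i,j}$ against the various terms of $\mathring h_{i,j}$) involves $w$ or $Y$ and is absorbed into $\rho L(w,Y)$, $\rho^{-1}Q^{(2)}(w,Y)$ and $\mathcal{O}(\rho^2)$. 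Multiplying by $\rho$ yields \eqref{eq.Hperturbeddisk}.

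I expect no serious obstacle beyond keeping track of powers of $\rho$; the only delicate point is the reduction, in the trace, of the two order-$\rho^3$ curvature terms of $\mathring h_{i,j}$ to the single Ricci term --- one must use the pair-exchange and antisymmetry identities for $R_p$, recall that the omitted $R_p(N,N,\cdot,\cdot)$ contribution to the Ricci contraction vanishes, and be careful with the curvature sign convention. As in the spherical case, setting $Y=0$ recovers the corresponding expansion in Lemma~2.3 of \cite{pac}.
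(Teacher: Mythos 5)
Your proof is correct and follows the approach the paper itself would use: it is the natural specialization of the argument for Theorem~\ref{th.perturbedsecond} (and Theorem~\ref{th.Hperturbed}) to the flat disk, and the paper in fact states Theorem~\ref{th.Hperturbeddisk} without a proof, leaving the adaptation implicit. The one genuine structural difference from the spherical-case proof is the one you identify: since $\mathcal N$ has constant coordinates in the frame $X_\mu$, the problematic term $-G(\nabla_{Z_i}\mathcal N,Z_j)$ can be evaluated directly from the Christoffel symbols of the normal-coordinate expansion (which are linear in $\Xi$ with coefficients built from $R_p$), producing a pure $\mathcal O(\rho^3)$ curvature contribution; this replaces the radial-reparametrization device $\tilde G(s,z)$ that the paper uses to handle $G(\nabla_{Z_i}(\Upsilon-\mathcal C),Z_j)$ in the spherical case, where $\Upsilon-\mathcal C$ is a genuine (non-constant) vector field pointing to the center. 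The second piece $-G(\nabla_{Z_i}(a^kZ_k),Z_j)$ is treated exactly as in the spherical proof --- differentiate $a^k\mathring g_{kj}$ using $G(Z_i,\mathring{\mathcal M})=0$, apply metric compatibility, and pass from $\mathring\Gamma^l_{ij}$ to the flat disk's (vanishing) Christoffel symbols via \eqref{eq.inversemetricdisk} --- and combines with the first piece to give exactly $\rho(Hess^\Sigma w)_{ij}$ plus the stated $\rho^3$ curvature term. Your trace step for the mean curvature, including the observation that the $\{\Theta_i,N\}$-completeness and $R_p(N,N,\cdot,\cdot)=0$ collapse the two curvature terms to a single Ricci contraction, is the correct bookkeeping. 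One small caution: in carrying out the algebra one must use a Euclidean (affine) parametrization of the disk so that $\Theta_{i,j}=0$; with the polar parametrization used elsewhere in Section~\ref{sec.geodesic} there is an extra $\Theta_{i,j}$ term in $\partial_{z^i}\langle R_p(\Theta,\Theta_j)\Theta,N\rangle$ and the $\Theta_i$ are not orthonormal, so the final trace must be taken with $g^{ij}$ rather than $\delta^{ij}$; either route gives the same intrinsic result, but it is worth being explicit to avoid bookkeeping errors.
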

\subsection{Volumes Enclosed by Perturbed Geodesic Double Bubbles}\label{sub.perturbedvolume}
In this subsection we aim to find expansions for the two volumes enclosed by the perturbed geodesic double bubble $\Sigma_{p,\rho}(w,Y)$. We will consider the symmetric difference between the perturbed spherical sector and the non-perturbed one, and then add the expansions obtained in Section \ref{sec.geodesic} for geodesic double bubbles. The main advantage of this method is to solve a non-uniqueness problem. Indeed, we cannot a-priori know how the bottom of the region $P^\sigma$ enclosed by each spherical cap is deformed since the perturbation is defined only on the spherical surface (see Figure \ref{fig.nonunique} below), unless we are in the symmetric case; therefore we cannot compute the volumes of the three perturbed regions separately in an unique way. On the other hand, these contributions will clearly balance each other when computing the volumes $(V_1)_{p,\rho}(w,Y)$ and $(V_2)_{p,\rho}(w,Y)$, since the exponential map cannot create empty chambers. {Recall we are assuming that the perturbation is admissible in the terminology from in Subsection \ref{sub.class}.}
\begin{figure}[h]\label{fig.nonunique}
 \centering
\includegraphics[scale=2.0]{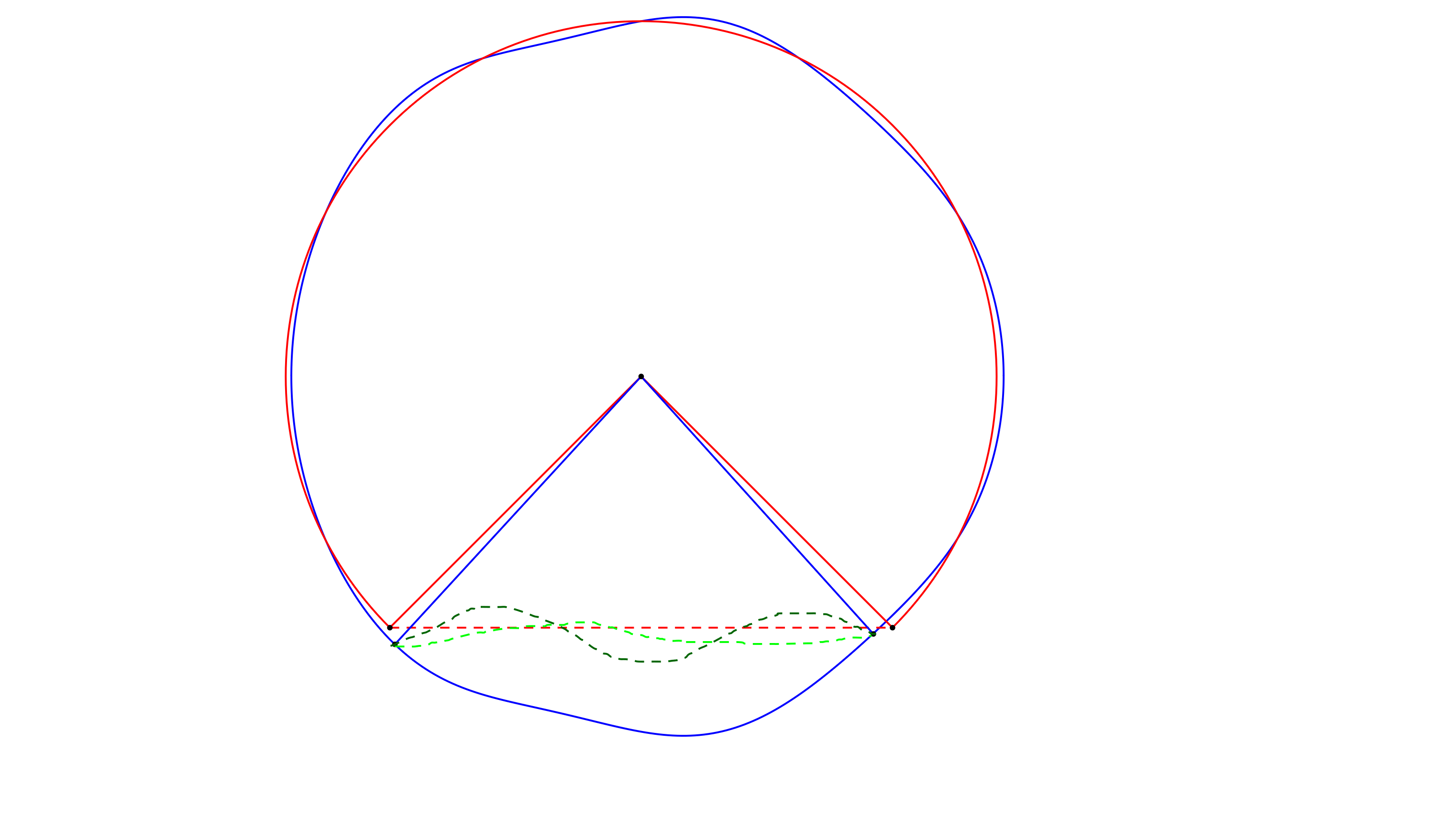}
\caption{Three different ways of closing the perturbed cap}
\end{figure}

For the remainder of this subsection it is convenient to set $\Sigma^\sigma(w,Y):=\phi_{w,Y}(\Sigma^\sigma)$. Define the (perturbed) spherical sector $Sec(\Sigma^\sigma)$ (respectively $Sec(\Sigma^\sigma(w,Y))$) to be the set $\{ x=C^\sigma+s (\Theta-C^\sigma) \in T_p M \mid \Theta \in \Sigma^\sigma, s \in [0,1] \}$ (resp. $\{ x=C^\sigma+s (\Theta-C^\sigma) \in T_p M \mid \Theta \in \Sigma^\sigma(w,Y), s \in [0,1] \}$). Let us denote its image through the exponential map by $Sec(\Sigma_{(p,\s),\rho}^\sigma):=\Esp(\rho \cdot Sec(\Sigma^\sigma))$ (resp. $Sec(\Sigma_{(p,\s),\rho}^\sigma(w,Y)):=\Esp(\rho \cdot Sec(\Sigma^\sigma(w,Y)))$).
Notice that the presence of a possibly non-trivial tangential component $Y$ of the perturbation at the boundary $\Gamma$ might change the volume of the perturbed sector. Let us start with the asymmetric case: from the above discussion we obtain the following equations
\begin{equation*}
\begin{cases}
\resizebox{0.95\hsize}{!}{ $
(V_1)_{(p,\s),\rho}(w,Y)=(V_1)_{(p,\s),\rho}+(\vol( Sec(\Sigma^1_{(p,\s),\rho}(w,Y)))-\vol(Sec(\Sigma^1_{(p,\s),\rho})))+(\vol(Sec(\Sigma^0_{(p,\s),\rho}(w,Y)))-\vol(Sec(\Sigma^0_{(p,\s),\rho}))),$}\\
\resizebox{0.95\hsize}{!}{ $(V_2)_{(p,\s),\rho}(w,Y)=(V_2)_{(p,\s),\rho}+(\vol(Sec(\Sigma^2_{(p,\s),\rho}(w,Y)))-\vol(Sec(\Sigma^2_{(p,\s),\rho})))-(\vol(Sec(\Sigma^0_{(p,\s),\rho}(w,Y)))-\vol(Sec(\Sigma^0_{(p,\s),\rho}))).$}
\end{cases}
\end{equation*}
Therefore, we reduced the problem to the computation of an expansion for $\vol( Sec(\Sigma^\sigma_{(p,\s),\rho}(w,Y)))-\vol( Sec(\Sigma^\sigma_{(p,\s),\rho}))$. We focus our attention to the case $\sigma=1$, since the other ones are analogous, and we drop the index $\sigma$ in order to keep the notation short. The volume of the perturbed sector is computed as follows
\begin{align*}
&\vol( Sec(\Sigma_{(p,\s),\rho}(w,Y)))=\int_{Sec(\Sigma_{(p,\s),\rho}(w,Y))}1=\rho^{m+1} \int_{Sec(\Sigma(w,Y))} \Big( 1-\tfrac{\rho^2}{6}\Ric(x,x)+ \mathcal{O}(\rho^3) \Big) dx\\
=& \rho^{m+1} \bigg\{ \Big( \tfrac{R}{m+1} Area(\Sigma)-\int_{\Sigma} w-R\tfrac{div_\Sigma(Y)}{m+1}+Q^{(2)}(w,Y) d\mu_{\Sigma}\Big)\\
&-\tfrac{\rho^2}{6} \int_\Sigma \Big[ \Big( \int_0^1 \Ric(C+s(\Theta-C+wN+Y),C+s(\Theta-C+wN+Y)) s^m ds \Big) \cdot\\
&\cdot |\Theta-C+wN+Y| \big(1-\tfrac{w}{R} \big)^m (1+div_\Sigma(Y)+Q^{(2)}) \Big] d\mu_{\Sigma}+\mathcal{O}(\rho^3){+\rho^3 L(w,Y)}{+\rho^3 Q^{(2)}(w,Y)} \bigg\}\\
=& \vol( Sec(\Sigma_{(p,\s),\rho}))+\rho^{m+1} \Big( -\int_{\Sigma} w-R\tfrac{div_\Sigma(Y)}{m+1}+Q^{(2)}(w,Y) +\rho^2 L(w,Y) d\mu_{\Sigma}\Big).
\end{align*}
Therefore, we easily get
\begin{equation}
\resizebox{0.92\hsize}{!}{ $ \vol( Sec(\Sigma_{(p,\s),\rho}(w,Y)))-\vol( Sec(\Sigma_{(p,\s),\rho}))=-\rho^{m+1} \int_{\Sigma} w-R\tfrac{div_\Sigma(Y)}{m+1}+Q^{(2)}(w,Y) +\rho^2 L(w,Y) d\mu_{\Sigma},$}
\end{equation}
or more generally, restoring the index $\sigma=0,1,2$:
\begin{equation}
\resizebox{0.92\hsize}{!}{ $
\vol( Sec(\Sigma^\sigma_{(p,\s),\rho}(w,Y)))-\vol( Sec(\Sigma^\sigma_{(p,\s),\rho}))=-\rho^{m+1} \int_{\Sigma^\sigma} w_\sigma-R\tfrac{div_{\Sigma^\sigma}(Y_\sigma)}{m+1}+Q^{(2)}(w,Y) +\rho^2 L(w,Y) d\mu_{\Sigma^\sigma}.$}
\end{equation}
From the  above equations we deduce
\begin{align}
\rho^{-(m+1)}(V_1)_{(p,\s),\rho}(w,Y)=&\rho^{-(m+1)}(V_1)_{(p,\s),\rho} - \int_{\Sigma^1} w_1-R_1\tfrac{div_{\Sigma^1}(Y_1)}{m+1}+Q^{(2)}(w,Y) +\rho^2 L(w,Y) d\mu_{\Sigma^1}\nonumber\\
&- \int_{\Sigma^0} w_0-R_0\tfrac{div_{\Sigma^0}(Y_0)}{m+1}+Q^{(2)}(w,Y) +\rho^2 L(w,Y) d\mu_{\Sigma^0}+\mathcal{O}(\rho^3),\label{eq.perturbedvolume1}
\end{align}
and
\begin{align}
\rho^{-(m+1)}(V_2)_{(p,\s),\rho}(w,Y)=&\rho^{-(m+1)}(V_2)_{(p,\s),\rho} - \int_{\Sigma^2} w_2-R_2\tfrac{div_{\Sigma^2}(Y_2)}{m+1}+Q^{(2)}(w,Y) +\rho^2 L(w,Y) d\mu_{\Sigma^2}\nonumber\\
&+ \int_{\Sigma^0} w_0-R_0\tfrac{div_{\Sigma^0}(Y_0)}{m+1}+Q^{(2)}(w,Y) +\rho^2 L(w,Y) d\mu_{\Sigma^0}+\mathcal{O}(\rho^3).\label{eq.perturbedvolume2}
\end{align}
In the symmetric case, instead of computing the difference between the sectors, we can directly integrate over the regions $(B_1)_{(p,\s),\rho}(w,Y)$ and $(B_2)_{(p,\s),\rho}(w,Y)$ enclosed by the double bubble considered. Analogous calculations to the ones above yield (we set $(V)_{(p,\s),\rho}:=(V_1)_{(p,\s),\rho}=(V_2)_{(p,\s),\rho}$)
\begin{align}
\rho^{-(m+1)}(V_1)_{(p,\s),\rho}(w,Y)=&\rho^{-(m+1)}(V)_{(p,\s),\rho}- \int_{\Sigma^1} w_1-R\tfrac{div_{\Sigma^1}(Y_1)}{m+1}+Q^{(2)}(w,Y) +\rho^2 L(w,Y) d\mu_{\Sigma^1}\nonumber\\
&- \int_{\Sigma^0} w_0-R\tfrac{div_{\Sigma^0}(Y_0)}{m+1}+Q^{(2)}(w,Y) +\rho^2 L(w,Y) d\mu_{\Sigma^0} +\mathcal{O}(\rho^3), \label{eq.perturbedvolume1sym}
\end{align}
and
\begin{align}
\rho^{-(m+1)}(V_2)_{(p,\s),\rho}(w,Y)=&\rho^{-(m+1)}(V)_{(p,\s),\rho}- \int_{\Sigma^2} w_2-R\tfrac{div_{\Sigma^2}(Y_2)}{m+1}+Q^{(2)}(w,Y) +\rho^2 L(w,Y) d\mu_{\Sigma^2}\nonumber\\
&+ \int_{\Sigma^0} w_0-R\tfrac{div_{\Sigma^0}(Y_0)}{m+1}+Q^{(2)}(w,Y) +\rho^2 L(w,Y) d\mu_{\Sigma^0}+\mathcal{O}(\rho^3).  \label{eq.perturbedvolume2sym}
\end{align} 
\subsection{Areas of Perturbed Geodesic Double Bubbles}\label{sub.perturbedarea}
In this subsection we  expand  the $m-$dimensional volumes of the perturbed sheets $\Sigma^\sigma_{(p,\s),\rho}(w,Y)$. As already done before, we focus our attention to the case $\sigma=1$ and we drop this index. Using the expansion for the metric $\mathring{g}_{i,j}$ in \eqref{eq.perturbedfirst} and Taylor's expansion of the square root of the determinant
\begin{equation}
\sqrt{det(Id+H)}=1+\tfrac{1}{2} tr(H)+\mathcal{O}(|H|^2).
\end{equation}
we deduce that the volume element $\sqrt{\mathring{g}}$ satisfies
 \begin{align*}
 \rho^{-m} \sqrt{\mathring{g}} =&(1-\tfrac{w}{R})^m[\sqrt{g}+div_{\Sigma}(Y)+\tfrac{\rho^2}{6} \Ric(\Theta,\Theta)+ \rho^2 L(w,Y) + Q^{(2)}(w,Y)+ \mathcal{O}(\rho^3)]. 
 \end{align*}
Therefore, we can integrate this expression over $\Sigma$ to get
\begin{small}
\begin{align*}
\area(\Sigma_{(p,\s),\rho}(w,Y))&=\rho^{m} \int_{\Sigma} 1-m\tfrac{w}{R}+div_{\Sigma}(Y)+\tfrac{\rho^2}{6} \Ric(\Theta,\Theta)+ \rho^2 L(w,Y) + Q^{(2)}(w,Y)d\mu_{\Sigma} + \mathcal{O}(\rho^{m+3})\\
 &=\area(\Sigma_{(p,\s),\rho})-\rho^{m} \int_{\Sigma} m\tfrac{w}{R}-div_{\Sigma}(Y)+ \rho^2 L(w,Y) + Q^{(2)}(w,Y)d\mu_{\Sigma} + \mathcal{O}(\rho^{m+3}),
\end{align*}
\end{small}
Restoring the index for the boundary component, we get for every $\sigma=0,1,2$ in the asymmetric case, or for any $\sigma=1,2$ in the symmetric case
\begin{equation}\label{eq.perturbedarea}
\resizebox{0.92\hsize}{!}{ $
\area(\Sigma^\sigma_{(p,\s),\rho}(w,Y))=\area(\Sigma^\sigma_{(p,\s),\rho})-\rho^{m} \int_{\Sigma^\sigma} m\tfrac{w_\sigma}{R_\sigma}-div_{\Sigma^\sigma}(Y_\sigma)+ \rho^2 L(w,Y) + Q^{(2)}(w,Y)d\mu_{\Sigma^\sigma} + \mathcal{O}(\rho^{m+3}).$}
\end{equation}
In the symmetric case, equation \eqref{eq.perturbedfirstdisk}, together with Taylor's expansion for the square root of the determinant, ensure that the $m-$dimensional measure of the perturbed geodesic disk satisfies
\begin{equation}\label{eq.perturbedareadisk}
\resizebox{0.92\hsize}{!}{ $
\area(\Sigma^0_{(p,\s),\rho}(w,Y))=\area(\Sigma^0_{(p,\s),\rho})+\rho^{m} \int_{\Sigma^0}div_{\Sigma^0}(Y_0)+\rho^2 L(w,Y)+ Q^{(2)}(w,Y) d\mu_{\Sigma^0}+ \mathcal{O}(\rho^{m+3}).$}
\end{equation}
\subsection{Approximate Equi-angularity of Perturbed Geodesic Double Bubbles}\label{sub.perturbedequiangularity}
Concluding this section, we present a  brief calculation for the sum of the inner conormal vector fields $\mathring{\nu}^\sigma$' s of the surfaces $\Sigma_{(p,\s),\rho}(w,Y)$'s at points in $\Gamma_{(p,\s),\rho}(w,Y)$, which will play a \emph{crucial role} in annihilating a boundary term in the proof of the main Proposition \ref{prop.criticalpointCMC}.
Similarly to what we did in Subsection \ref{sub.equiangularity}, we introduce auxiliary vectors $\tilde{\nu}_{(p,\s)}(w,Y)^\sigma:=\Esp (\rho \nu_{w,Y}^\sigma)$, where $\nu_{w,Y}^\sigma$ denotes the inner conormal vector field of the Euclidean perturbed surface $\Sigma^\sigma(w,Y)$. Refining the argument at the beginning of Subsection \ref{sub.second}, one can show that for every $\sigma$ we have $\mathring{\nu}^\sigma=\tilde{\nu}_{(p,\s)}(w,Y)^\sigma/\norm{\tilde{\nu}_{(p,\s)}(w,Y)^\sigma}_G+\mathcal{O}(\rho^2)+\rho^2 \mathfrak{L}(w,Y)+\rho^2 \mathfrak{Q}^{(2)}(w,Y)$. Here $\mathfrak{L}$ and $\mathfrak{Q}^{(2)}$ denote a $L$-term and a $Q^{(2)}$-term respectively, depending only on $w,Y$ and their first derivatives with respect to the vectors $\nu^\sigma$'s, see the notation in Subsection \ref{sub.class}. Moreover, from \eqref{eq.metricnormalcoord} we can deduce
\begin{equation}
\tilde{\nu}_{(p,\s)}(w,Y)^0+\tilde{\nu}_{(p,\s)}(w,Y)^1+\tilde{\nu}_{(p,\s)}(w,Y)^2=\nu^0_{w,Y}+\nu^1_{w,Y}+\nu^2_{w,Y}+ \mathcal{O}(\rho^2)+\rho^2 \mathfrak{L}(w,Y)+\rho^2 \mathfrak{Q}^{(2)}(w,Y),
\end{equation}
so we arrive at
\begin{equation}
\mathring{\nu}^0+\mathring{\nu}^1+\mathring{\nu}^2=\nu^0_{w,Y}+\nu^1_{w,Y}+\nu^2_{w,Y}+ \mathcal{O}(\rho^2)+\rho^2 \mathfrak{L}(w,Y)+\rho^2 \mathfrak{Q}^{(2)}(w,Y).
\end{equation}
From \cite{hut}, we know that for perturbations of Euclidean double bubbles
\begin{equation}
\nu^0_{w,Y}+\nu^1_{w,Y}+\nu^2_{w,Y}=\big(\tfrac{\partial w_0}{\partial \nu^0}+q_0 w_0\big)N^0+\big(\tfrac{\partial w_1}{\partial \nu^1}+q_1 w_1\big)N^1+\big(\tfrac{\partial w_2}{\partial \nu^2}+q_2 w_2\big)N^2+\mathfrak{Q}^{(2)}(w,Y),
\end{equation}
Here the $0^{th}$-order term is $\nu^0+\nu^1+\nu^2=0$ by the geometric balance equations satisfied by the standard double bubble.
Combining the above expressions, we obtain an expansion for the sum of the inner conormal vectors $\mathring{\nu}^\sigma$' s
\begin{equation}\label{eq.suminnernormals}
\begin{aligned}
\mathring{\nu}^0+\mathring{\nu}^1+\mathring{\nu}^2=&\big(\tfrac{\partial w_0}{\partial \nu^0}+q_0 w_0\big)N^0+\big(\tfrac{\partial w_1}{\partial \nu^1}+q_1 w_1\big)N^1+\big(\tfrac{\partial w_2}{\partial \nu^2}+q_2 w_2\big)N^2\\
&+ \mathcal{O}(\rho^2)+\rho^2 \mathfrak{L}(w,Y)+\mathfrak{Q}^{(2)}(w,Y).
\end{aligned}
\end{equation}
We are finally ready to state the equi-angularity we will impose, namely $\mathring{\nu}^0+\mathring{\nu}^1+\mathring{\nu}^2=0$.
From the equi-angularity of the standard double bubble $\Sigma$, we know that $N_1=N_0+N_2$, therefore after projecting the expansion in \eqref{eq.suminnernormals} on $N_0$ and $N_2$, we obtain the equivalent system
\begin{equation}\label{eq.perturbedequiangularity}
\begin{cases}
\tfrac{\partial w_0}{\partial \nu^0} +q_0 w_0 +\tfrac{\partial w_1}{\partial \nu^1} +q_1 w_1 =\mathcal{O}(\rho^2)+\rho^2 \mathfrak{L}(w,Y)+\mathfrak{Q}^{(2)}(w,Y)=:(e_0)_{(p,\s),\rho}(w,Y) \quad \text{on} \ \Gamma;\\
\tfrac{\partial w_1}{\partial \nu^1} +q_1 w_1 +\tfrac{\partial w_2}{\partial \nu^2} +q_2 w_2 =\mathcal{O}(\rho^2)+\rho^2 \mathfrak{L}(w,Y)+\mathfrak{Q}^{(2)}(w,Y)=:(e_2)_{(p,\s),\rho}(w,Y) \quad \text{on} \ \Gamma.
\end{cases}
\end{equation}
The particular structure of $\mathfrak{L}$ and $\mathfrak{Q}^{(2)}$ guarantees that the boundary data we are imposing in the system \eqref{eq.perturbedequiangularity} belongs to $C^{0,\alpha}(\Gamma) \times C^{0,\alpha}(\Gamma)$.
\section{Fixed Point Argument and Pseudo-Double Bubbles}\label{sec.pseudodb}
Throughout this section, we will discuss how one can choose the perturbation $\phi_{w,Y}$ as in \eqref{eq.definitionperturbation} in order to get the mean curvature vector $(\mathring{H_0},\mathring{H_1},\mathring{H_2})$ of the perturbed geodesic double bubble $\Sigma^\sigma_{(p,\s),\rho}(w,Y)$ as close as possible to a constant three-vector $(H_0,H_1,H_2)$. As we already pointed out in the introduction, one cannot expect to find \emph{at every point} $(p,\s)\in UTM$ a perturbation for which this vector of mean curvatures is constant, because the linearization of this condition is induced by an elliptic operator with non-trivial kernel. This issue will give rise to the concept of \emph{pseudo-double bubbles}, see Definition \ref{def.pseudobubble} below, inspired by the analogous concept considered by Nardulli (Definition $1.2$ in \cite{nar}). 

From an analytic perspective, we will need to find a (unique) solution  $(w_{(p,\s),\rho}, Y_{(p,\s),\rho})\in \mathcal{C}_{amm}$ to a coupled system of PDE's under mixed boundary conditions. In particular, we will have to deal with three quasi-linear second-order elliptic equations for the $w_\sigma$'s, in presence of a non-trivial kernel, and three quasi-linear non-elliptic first-order equations for the $Y_\sigma$'s, under nine Dirichlet- and Robin-type boundary conditions. To solve these, we will apply a Lyapunov-Schmidt reduction, and impose some compatibility conditions to tackle respectively the non-trivial kernel and this excess of boundary conditions.

To begin, let us consider for data $(e_0,e_2) \in C^{k,\alpha}(\Gamma) \times C^{k,\alpha}(\Gamma)$ of small norm the generalized equi-angularity conditions (we are projecting on $N^0$ and $N^2$)
\begin{equation}\label{eq.generalequiangularity}
\begin{cases}
\tfrac{\partial w_0}{\partial \nu^0} +q_0 w_0 +\tfrac{\partial w_1}{\partial \nu^1} +q_1 w_1 =e_0 \quad \text{on} \ \Gamma;\\
\tfrac{\partial w_1}{\partial \nu^1} +q_1 w_1 +\tfrac{\partial w_2}{\partial \nu^2} +q_2 w_2 =e_2 \quad \text{on} \ \Gamma.
\end{cases}
\end{equation}
Notice that the Euclidean condition \eqref{eq.linearisedequiangularity} is equivalent to the above with $(e_0,e_2)=(0,0)$, whereas the perturbed condition \eqref{eq.perturbedequiangularity} is recovered by choosing $(e_0,e_2)=((e_0)_{(p,\s),\rho}(w,Y),(e_2)_{(p,\s),\rho}(w,Y))=(\mathcal{O}(\rho^2)+\rho^2 \mathfrak{L}(w,Y)+\mathfrak{Q}^{(2)}(w,Y),\mathcal{O}(\rho^2)+\rho^2 \mathfrak{L}(w,Y)+\mathfrak{Q}^{(2)}(w,Y))$, so the introduction of a 
non-zero right-hand side in \eqref{eq.generalequiangularity} will allow to pass from a linearized equiangularity condition 
to a genuine one. In what follows, we will show how to produce a unique solution to an \emph{augmented} problem depending Lipschitz-continuously on the data $(e_0,e_1)$ in \eqref{eq.generalequiangularity} and then obtain a solution of the problem under the desired condition \eqref{eq.perturbedequiangularity}.
As we have done in the previous section, we consider a fixed pair $(p,\s)\in UTM$ along which a geodesic double bubble $\Sigma_{(p,\s),\rho}$ is centered and a fixed scale $\rho \in (0,\rho_0)$. We adopt the same conventions and notation of the previous sections. Suppose first that we are in the asymmetric case, so each sheet $\Sigma^\sigma$ is a cap inside a sphere $S(C^\sigma,R_\sigma)$. In view of 
Theorem \ref{th.Hperturbed}, we would like to find a solution $(w,Y)$ to an equation of the form  
\begin{equation*}
\begin{aligned}
m&+(R\Delta_{\Sigma}w+\tfrac{m}{R}w)+\tfrac{\rho^2}{3}\Ric(\Theta,\Theta) - \tfrac{2 \rho^2}{3} \Ric(C,\Theta) + \tfrac{\rho^2 m}{6 R^2} \scal{\riem{\Theta}{C}{\Theta}}{C}\\
&+ \rho^2 L(w,Y) + Q^{(2)}(w,Y) + \mathcal{O}(\rho^3)=\rho R \mathring{H}=\rho R H(\Sigma_{(p,\s),\rho}(w,Y))=m,
\end{aligned}
\end{equation*}
under the boundary conditions \eqref{eq.junctionGamma}, \eqref{eq.boundaryconditionw},  \eqref{eq.boundaryconditionu} and \eqref{eq.generalequiangularity}, where it is understood that $w=(w_0,w_1,w_2)$, $Y=(Y_0,Y_1,Y_2)$ and every quantity  depends on the index $\sigma=0,1,2$ considered. It turns out that one cannot always solve this boundary value problem, since its linearization is induced by the operator
\begin{equation*}
(R_0 \Delta_{\Sigma^0}+\tfrac{m}{R_0},R_1 \Delta_{\Sigma^1}+\tfrac{m}{R_1}, R_2 \Delta_{\Sigma^2}+\tfrac{m}{R_2}):C^{2,\alpha}(\Sigma) \longrightarrow C^{0,\alpha}(\Sigma)
\end{equation*}
which has non-trivial kernel under the linearized boundary conditions \eqref{eq.boundaryconditionw} and \eqref{eq.linearisedequiangularity} imposed on $w$. For this reason, we choose to perform a Lyapunov-Schmidt Reduction, which will simplify the problem to a finite dimensional one. In the following, set for all $\sigma=0,1,2$
\begin{equation}
b_\sigma:=\tfrac{\rho^2}{3}\Ric(\Theta,\Theta) - \tfrac{2 \rho^2}{3} \Ric(C^\sigma,\Theta) + \tfrac{\rho^2 m}{6 R^2} \scal{\riem{\Theta}{C^\sigma}{\Theta}}{C^\sigma} \mid_{\Sigma^\sigma}.
\end{equation}
\subsection{Lyapunov-Schmidt Reduction}
Decompose the space $\mathbb{L}^2(\Sigma)$ in an orthogonal sum $V \oplus V^{\perp}$, where $V:=Ker(R_0 \Delta_{\Sigma^0}+\tfrac{m}{R_0}) \times Ker(R_1 \Delta_{\Sigma^1}+\tfrac{m}{R_1}) \times Ker(R_2 \Delta_{\Sigma^2}+\tfrac{m}{R_2})$ under the linearized boundary conditions \eqref{eq.boundaryconditionw} and \eqref{eq.linearisedequiangularity}, and notice that $C^{2,\alpha}(\Sigma)$ inherits the decomposition as an affine subspace of $\mathbb{L}^2(\Sigma)$. Therefore, when performing the reduction, we will impose \eqref{eq.generalequiangularity} only on $V^\perp$.
The space $V\cap C^{2,\alpha}(\Sigma)$ was studied, under the assumed boundary conditions \eqref{eq.boundaryconditionw} and \eqref{eq.linearisedequiangularity}, by the first-named author in \cite{dim}, where it was shown that this space is generated by infinitesimal translations and rotations of the standard double bubble, and has dimension $2m+1$.
Moreover, by Fredholm's alternative, we know that the following operator is invertible
\begin{equation}
(R_0 \Delta_{\Sigma^0}+\tfrac{m}{R_0},R_1 \Delta_{\Sigma^1}+\tfrac{m}{R_1}, R_2 \Delta_{\Sigma^2}+\tfrac{m}{R_2}):V^{\perp}\cap C^{k+2,\alpha}(\Sigma) \longrightarrow V^{\perp} \cap C^{k,\alpha}(\Sigma),
\end{equation}
and the inverse operator $(R_0 \Delta_{\Sigma^0}+\tfrac{m}{R_0},R_1 \Delta_{\Sigma^1}+\tfrac{m}{R_1}, R_2 \Delta_{\Sigma^2}+\tfrac{m}{R_2})^{-1}$ induces a continuous linear endomorphism on $C^{k,\alpha}(\Sigma)$ if $(e_0,e_2) \in C^{k,\alpha}(\Gamma) \times C^{k,\alpha}(\Gamma)$. For later use, we remark that this operator is Lipschitz-continuous with respect to the data $(e_0,e_2)\in C^{k,\alpha}(\Gamma) \times C^{k,\alpha}(\Gamma)$. 
Using this information, we decompose $w:=\omega+v$, and we would like to find $(\omega,v) \in V^{\perp} \times V$, satisfying the weaker system
\begin{equation}\label{eq.systemw}
\begin{aligned}
(R_\sigma\Delta_{\Sigma^\sigma}\omega_\sigma+\tfrac{m}{R_\sigma}\omega_\sigma)+\rho^2 b_\sigma+ \rho^2 L(w,Y) + Q^{(2)}(w,Y) + \mathcal{O}(\rho^3) + v_\sigma=0, \quad \sigma=0,1,2
\end{aligned}
\end{equation}
under the boundary conditions described above. In order to obtain a solution, we project the system \eqref{eq.systemw} on $V$ and $V^{\perp}$ through projectors $\Pi_V$ and  $\Pi_{V^{\perp}}$ respectively: 
\begin{equation}\label{eq.systemwprojected}
\begin{cases}
v_\sigma=-\Pi_V \Big( \rho^2 b_\sigma+ \rho^2 L(w,Y) + Q^{(2)}(w,Y) + \mathcal{O}(\rho^3) \Big),\\
(R_\sigma \Delta_{\Sigma^\sigma}\omega_\sigma+\tfrac{m}{R_\sigma}\omega_\sigma)=-\Pi_{V^{\perp}}\Big(\rho^2 b_\sigma+ \rho^2 L(w,Y) + Q^{(2)}(w,Y) + \mathcal{O}(\rho^3) \Big),
\end{cases}
 \quad \sigma=0,1,2. 
\end{equation}
For later purposes, it is convenient to rewrite the second equation of \eqref{eq.systemwprojected} in the following form 
\begin{equation}\label{eq.systemwrewritten}
\begin{cases}
\omega_\sigma=\mathfrak{F}_\sigma(\omega_\sigma,v_\sigma,Y_\sigma):=(R_\sigma \Delta_{\Sigma^\sigma}+\tfrac{m}{R_\sigma})^{-1} \Pi_{V^{\perp}}\Big( \rho^2 L(w,Y)
+ Q^{(2)}(w,Y) -\rho^2 b_\sigma+ \mathcal{O}(\rho^3)\Big),\\
v_\sigma=\mathfrak{G}_\sigma(\omega_\sigma,v_\sigma,Y_\sigma):=-\Pi_V \Big( \rho^2 b_\sigma+ \rho^2 L(w,Y) + Q^{(2)}(w,Y) + \mathcal{O}(\rho^3) \Big).
\end{cases}
\end{equation}
Notice that \eqref{eq.systemwprojected} clearly depends also on $Y$ (through $L(w,Y)$ and $Q^{(2)}(w,Y)$), so we need to make a suitable choice of it. In principle, we could extend the boundary data $u_\sigma \nu^\sigma+(Y_\sigma)^{\Gamma}$'s arbitrarily inside the hypersurface $\Sigma^\sigma$, but we encounter two major problems. First of all, most of the extension theorems we are aware of  (see \cite{cic,mcs,ste}), allow to \emph{preserve} the regularity so, since by \eqref{eq.boundaryconditionu} and $w\mid_{\Gamma} \in C^{0,\alpha}(\Gamma)$ we deduce  $u\mid_{\Gamma} \in C^{0,\alpha}(\Gamma)$, we expect to get $Y \in C^{0,\alpha}(\Sigma)$; however, these results do \emph{not improve} the regularity and hence we cannot get anything better than $Y \in C^{0,\alpha}(\Sigma)$, which is not enough to have $(w,Y)$ admissible. Secondly, we would like the choice $Y$ to depend smoothly on the point $(p,\s) \in UTM$ along which we are setting up our analysis, and this is difficult to achieve without a selection principle for these possible extensions. We therefore choose to impose additional equations on the tangential component $Y$ which will solve both problems at once; more precisely, we will prescribe its divergence under the assumed boundary conditions. It is worth mentioning that this problem is \emph{overdetermined} and \emph{non-elliptic} (in an analytical sense), but it still has well-posedness and regularity theory by the results in \cite{sch}, under proper compatibility conditions between the prescribed data, which can be met in our case since we have freedom in choosing the prescribing functions. Finally, we remark that one cannot find $Y$ in the form $Y:=\nabla y$ where $y$ satisfies an elliptic equation: indeed, under the conditions \eqref{eq.boundaryconditionu} and \eqref{eq.junctionGamma}, such a problem is overdetermined.
\subsection{A Fluid Dynamics Approach}
The method we are about to adopt is inspired by analogous problems in fluid dynamics, where one prescribes the so-called \emph{compressibility} of the flow, that is the divergence of its velocity. For a mathematical point of view see \cite{aco,gir} and the exhaustive book \cite{sch}, where Schwartz gives a beautiful treatment of the regularity theory for the more general case of Hodge decomposition.
Before introducing new equations on $Y$, let us give a brief summary of some results from \cite{sch} we intend to use. Given a smooth Riemannian manifold with boundary $(X,\partial X, g_X)$, let us consider the following boundary value problem: for any data $f \in C^{k,\alpha}(X)$ and $V \in C^{k,\alpha}(\partial X;TX)$ we look for a solution $Y \in C^{k+1,\alpha}(X;TX)$ of
\begin{equation}\label{eq.divergenceproblemgeneral}
\begin{cases}
\Div_X (Y)=f \quad \text{on } X;\\
Y=V \quad \text{on } \partial X.
\end{cases}
\end{equation}
From Lemma $3.5.5$ in \cite{sch}, we know that this problem has a solution if and only if the following compatibility condition is met
\begin{equation}\label{eq.compatibilitygeneral}
\int_X f d\mu_X=-\int_{\partial X} g_X(\nu_X,V) d\mu_{\partial X},
\end{equation}
where $\nu_X$ is the unit vector normal to $\partial X$ and pointing inwards. Clearly, this condition is necessary by the divergence theorem, but the sufficiency is far from being trivial, in fact, we repeat, this problem is not elliptic and over-determined. Furthermore, by  Corollary $3.3.4$ in \cite{sch} one can always choose a solution to \eqref{eq.divergenceproblemgeneral} verifying, for some constants $D_k$,
\begin{equation}\label{eq.ellipticestimatediv}
\norm{Y}_{C^{k+1,\alpha}(X;TX)} \le D_k \big(\norm{f}_{C^{k,\alpha}(X)}+\norm{V}_{C^{k,\alpha}(\partial X;TX)} \big).
\end{equation}
Here we have used Morrey's embedding theorem in order to substitute the Sobolev norms considered in \cite{sch} with the above H\"older norms. For what it regards the uniqueness, the solution is unique up to what the author calls \emph{Dirichlet} and \emph{Neumann} fields (Theorem $3.2.5$ and Corollary $3.2.6$ in \cite{sch}); however, when the manifold $(X,\partial X)$ is contractible, both these classes collapse on the trivial one, so we get that the solution to \eqref{eq.divergenceproblemgeneral} is unique, see the Remark after Theorem $2.2.2$ in \cite{sch}. Therefore in this case, we have a well-defined and continuous operator $\Div^{-1}:C^{k,\alpha}(X;TX) \rightarrow C^{k+1,\alpha}(X;TX)$, inducing continuous endomorphisms of $C^{k,\alpha}(X;TX)$ for every $k$.
Altogether, since in our case $(\Sigma^\sigma,\Gamma)$ is contractible, we have existence, uniqueness and $C^{1,\alpha}$-norm bounds (i.e. well-posedness) for the following problem
\begin{equation}\label{eq.systemY}
\begin{cases}
\Div_{\Sigma^\sigma} (Y_\sigma)=f_\sigma \quad \text{on } \Sigma;\\
g(Y_\sigma,\nu^\sigma)=u_\sigma \quad \text{on } \Gamma,\\
(Y_\sigma)^\Gamma=0 \quad \text{on } \Gamma,
\end{cases}
\end{equation}
as long as the $f_\sigma$'s are chosen so that
\begin{equation}\label{eq.compatibilitycondition}
\int_{\Sigma^\sigma} f_\sigma d\mu_{\Sigma^\sigma}=-\int_{\Gamma} u_\sigma d\mu_{\Gamma}.
\end{equation}
Remark that the conditions imposed on the fields $(Y_\sigma)^\Gamma$'s imply the condition \eqref{eq.junctionGamma}, and from now on we will consider this more strict condition:
\begin{equation}\label{eq.junctionGamma1}
(Y_0)^\Gamma=(Y_1)^\Gamma=(Y_2)^\Gamma=0.
\end{equation}
We now proceed to construct suitable data $f_\sigma$. In order to shorten our notation, we will drop the volume
elements in the integrals below, since they are already implicitly defined from the domains of integration
considered. Using equations \eqref{eq.boundaryconditionu} we obtain
\begin{equation}\label{eq.integraluGammaw}
\int_{\Gamma} u_0= \tfrac{1}{\sqrt{3}} \int_{\Gamma} w_0+2 w_2, \quad \int_{\Gamma} u_1= \tfrac{1}{\sqrt{3}} \int_{\Gamma} w_0-w_2, \quad \int_{\Gamma} u_2= \tfrac{1}{\sqrt{3}} \int_{\Gamma} -2 w_0-w_2.
\end{equation}
Therefore we are naturally led to consider $\int_\Gamma w_\sigma$; applying the system \eqref{eq.generalequiangularity} and the divergence theorem we get
\begin{align*}
\int_\Gamma w_0&= \tfrac{1}{q_0} \int_\Gamma -\tfrac{\partial w_0}{\partial \nu^0}-\tfrac{\partial w_1}{\partial \nu^1}-q_1 w_1+e_0=\tfrac{1}{q_0} \Big(\int_{\Sigma^0} \Delta w_0+\int_{\Sigma^1} \Delta w_1 \Big)-\tfrac{q_1}{q_0} \int_\Gamma w_1+\tfrac{1}{q_0} \int_\Gamma e_0;\\
\int_\Gamma w_2&= \tfrac{1}{q_2} \int_\Gamma -\tfrac{\partial w_2}{\partial \nu^2}-\tfrac{\partial w_1}{\partial \nu^1}-q_1 w_1+e_2=\tfrac{1}{q_2} \Big(\int_{\Sigma^2} \Delta w_2+\int_{\Sigma^1} \Delta w_1 \Big)-\tfrac{q_1}{q_2} \int_\Gamma w_1+\tfrac{1}{q_2} \int_\Gamma e_2.
\end{align*}
Summing these two equations and using \eqref{eq.boundaryconditionw} we also find
\begin{equation*}
\resizebox{0.99\hsize}{!}{ $
\int_\Gamma w_1=\int_\Gamma w_0+w_2= \tfrac{1}{q_0} \Big(\int_{\Sigma^0} \Delta w_0+\int_{\Sigma^1} \Delta w_1 \Big)+\tfrac{1}{q_2} \Big(\int_{\Sigma^2} \Delta w_2+\int_{\Sigma^1} \Delta w_1 \Big)- \big( \tfrac{q_1}{q_0} + \tfrac{q_1}{q_2}\big) \int_\Gamma w_1+\tfrac{1}{q_0} \int_\Gamma e_0+\tfrac{1}{q_2} \int_\Gamma e_2.$}
\end{equation*}
Firstly, let us consider the easier case $(e_0,e_2)=(0,0)$. Solving for $\int_\Gamma w_1$ and substituting back in the previous identities we arrive at
\begin{equation}\label{eq.integralwGamma}
\begin{aligned}
\int_\Gamma w_0=& P \Big[ q_2 \Big(\int_{\Sigma^0} \Delta w_0+\int_{\Sigma^1} \Delta w_1 \Big)+q_1 \Big(\int_{\Sigma^0} \Delta w_0-\int_{\Sigma^2} \Delta w_2 \Big) \Big],\\
\int_\Gamma w_1=& P \Big[ q_2 \Big(\int_{\Sigma^0} \Delta w_0+\int_{\Sigma^1} \Delta w_1 \Big)+q_0 \Big(\int_{\Sigma^1} \Delta w_1+\int_{\Sigma^2} \Delta w_2 \Big) \Big],\\
\int_\Gamma w_2=& P \Big[ q_1 \Big(-\int_{\Sigma^0} \Delta w_0+\int_{\Sigma^2} \Delta w_2 \Big)+q_0 \Big(\int_{\Sigma^1} \Delta w_1+\int_{\Sigma^2} \Delta w_2 \Big) \Big].
\end{aligned}
\end{equation}
Here we have set $P:=(q_0 q_1+q_1 q_2 + q_0 q_2)^{-1}$. Together with \eqref{eq.definitionq} and \eqref{eq.integraluGammaw}, the identities \eqref{eq.integralwGamma} imply
\begin{equation}\label{eq.integraluGammaLaplace}
\begin{aligned}
\int_\Gamma u_0=& P H_1 H_2 \Big[ -\tfrac{H_0}{H_1 H_2} \int_{\Sigma^0} \Delta w_0+\int_{\Sigma^1} R_1 \Delta w_1 +\int_{\Sigma^2} R_2 \Delta w_2 \Big],\\
\int_\Gamma u_1=& P H_0 H_2 \Big[ - \int_{\Sigma^0} R_0 \Delta w_0-\tfrac{H_1}{H_0 H_2}\int_{\Sigma^1} \Delta w_1+\int_{\Sigma^2} R_2 \Delta w_2  \Big],\\
\int_\Gamma u_2=& P H_0 H_1 \Big[ \int_{\Sigma^0} R_0 \Delta w_0+\int_{\Sigma^1} R_1 \Delta w_1-\tfrac{H_2}{H_0 H_1}\int_{\Sigma^2} \Delta w_2 \Big].
\end{aligned}
\end{equation}
Let us focus on the case $\sigma=1$. Since we will need to bootstrap the regularity of the solution, we need to get rid of the Laplace operators appearing in the formula above. Thus we appeal to the equations \eqref{eq.systemw} solved by the functions $w_\sigma$'s, and obtain
\begin{equation}\label{eq.integraluGammadifferent}
\begin{aligned}
\int_\Gamma u_1=& P H_0 H_2 \Big[ \int_{\Sigma^0} \Big( v_0+\tfrac{m}{R_0}w_0+\rho^2 b_0 + \rho^2 L(w_0,Y_0) + Q^{(2)}(w_0,Y_0) + \mathcal{O}(\rho^3) \Big)\\
+&\tfrac{H_1^2}{H_0 H_2} \int_{\Sigma^1} \Big( v_1+\tfrac{m}{R_1}w_1+\rho^2 b_1 + \rho^2 L(w_1,Y_1) + Q^{(2)}(w_1,Y_1) + \mathcal{O}(\rho^3) \Big)\\
-&\int_{\Sigma^2} \Big( v_2+\tfrac{m}{R_2}w_2+\rho^2 b_2 + \rho^2 L(w_2,Y_2) + Q^{(2)}(w_2,Y_2) + \mathcal{O}(\rho^3) \Big) \Big].
\end{aligned}
\end{equation}
As by \eqref{eq.compatibilitycondition}, we would like to express the right-hand side of \eqref{eq.integraluGammadifferent} as the integral of a function $-f_1:\Sigma^1 \rightarrow \R$. In principle, we may consider the right-hand side as a constant function, so the function $-f_1$ is given by an average; however, this would create an extremely troublesome non-local term in the system \eqref{eq.systemY} we are planning to solve. We therefore use the following trick, based on the change of variables in the integrals. Let $R_{\s} \in O(m+1)$ be the reflection along the hyperplane $\s^{\perp}$, and define for every $\sigma, \ \tau =0,1,2$, $\sigma \neq \tau$, functions $h_\sigma^\tau:\Sigma^\sigma \rightarrow \Sigma^\tau$ by
\begin{equation}
\begin{aligned}
h_0^1(x):= \tfrac{R_1}{R_0} (R_{\s}(x-C^0))&+C^1, \quad h_0^2(x):= \tfrac{R_2}{R_0} (x-C^0)+C^2, \quad h_2^1(x):= \tfrac{R_1}{R_2} (R_{\s}(x-C^2))+C^1,\\
h_1^0&:=(h_0^1)^{-1}, \quad h_2^0:=(h_0^2)^{-1}, \quad h_1^2:=(h_2^1)^{-1}.
\end{aligned}
\end{equation}
Notice that the $C^k$-norms (and the Jacobians) of the functions $h_\sigma^\tau$'s can be bounded in terms of the radii $R_\sigma$, $R_\tau$ and the dimension $m$. We can now change  variables in the above formula \eqref{eq.integraluGammadifferent}  to obtain a more suitable expression
\begin{equation*}
\begin{aligned}
\int_{\Sigma^1} f_1=-\int_\Gamma u_1=&- P H_0 H_2 \Big[ \big(\tfrac{R_0}{R_1}\big)^m \int_{\Sigma^1} \Big( v_0+\tfrac{m}{R_0}w_0+\rho^2 b_0 + \rho^2 L(w_0,Y_0) + Q^{(2)}(w_0,Y_0) + \mathcal{O}(\rho^3) \Big)\circ h_1^0\\
+&\tfrac{H_1^2}{H_0 H_2} \int_{\Sigma^1} \Big( v_1+\tfrac{m}{R_1}w_1+\rho^2 b_1 + \rho^2 L(w_1,Y_1) + Q^{(2)}(w_1,Y_1) + \mathcal{O}(\rho^3) \Big)\\
-&\big(\tfrac{R_2}{R_1}\big)^m \int_{\Sigma^1} \Big( v_2+\tfrac{m}{R_2}w_2+\rho^2 b_2 + \rho^2 L(w_2,Y_2) + Q^{(2)}(w_2,Y_2) + \mathcal{O}(\rho^3) \Big) \circ h_1^2 \Big],
\end{aligned}
\end{equation*}
from which we deduce a local expression for the data $f_1$
\begin{equation}\label{eq.definitionf1}
\begin{aligned}
f_1:=&- P H_0 H_2 \Big[ \big(\tfrac{R_0}{R_1}\big)^m \Big( v_0+\tfrac{m}{R_0}w_0+\rho^2 b_0 + \rho^2 L(w_0,Y_0) + Q^{(2)}(w_0,Y_0) + \mathcal{O}(\rho^3) \Big)\circ h_1^0\\
+&\tfrac{H_1^2}{H_0 H_2} \Big( v_1+\tfrac{m}{R_1}w_1+\rho^2 b_1 + \rho^2 L(w_1,Y_1) + Q^{(2)}(w_1,Y_1) + \mathcal{O}(\rho^3) \Big)\\
-&\big(\tfrac{R_2}{R_1}\big)^m \Big( v_2+\tfrac{m}{R_2}w_2+\rho^2 b_2 + \rho^2 L(w_2,Y_2) + Q^{(2)}(w_2,Y_2) + \mathcal{O}(\rho^3) \Big) \circ h_1^2 \Big]. 
\end{aligned}
\end{equation}
Similarly, from \eqref{eq.integraluGammaLaplace} and \eqref{eq.systemw} we are naturally led to set
\begin{equation}\label{eq.definitionf0}
\begin{aligned}
f_0:=&- P H_1 H_2 \Big[ \tfrac{H_0^2}{H_1 H_2} \Big( v_0+\tfrac{m}{R_0}w_0+\rho^2 b_0 + \rho^2 L(w_0,Y_0) + Q^{(2)}(w_0,Y_0) + \mathcal{O}(\rho^3) \Big)\\
-&\big(\tfrac{R_1}{R_0}\big)^m \Big( v_1+\tfrac{m}{R_1}w_1+\rho^2 b_1 + \rho^2 L(w_1,Y_1) + Q^{(2)}(w_1,Y_1) + \mathcal{O}(\rho^3) \Big)\circ h_0^1\\
-&\big(\tfrac{R_2}{R_0}\big)^m \Big( v_2+\tfrac{m}{R_2}w_2+\rho^2 b_2 + \rho^2 L(w_2,Y_2) + Q^{(2)}(w_2,Y_2) + \mathcal{O}(\rho^3) \Big) \circ h_0^2 \Big],
\end{aligned}
\end{equation}
and
\begin{equation}\label{eq.definitionf2}
\begin{aligned}
f_2:=&- P H_0 H_1 \Big[ -\big(\tfrac{R_0}{R_2}\big)^m \Big( v_0+\tfrac{m}{R_0}w_0+\rho^2 b_0 + \rho^2 L(w_0,Y_0) + Q^{(2)}(w_0,Y_0) + \mathcal{O}(\rho^3) \Big)\circ h_2^0\\
-&\big(\tfrac{R_1}{R_2}\big)^m \Big( v_1+\tfrac{m}{R_1}w_1+\rho^2 b_1 + \rho^2 L(w_1,Y_1) + Q^{(2)}(w_1,Y_1) + \mathcal{O}(\rho^3) \Big)\circ h_2^1\\
+&\tfrac{H_2^2}{H_0 H_1} \Big( v_2+\tfrac{m}{R_2}w_2+\rho^2 b_2 + \rho^2 L(w_2,Y_2) + Q^{(2)}(w_2,Y_2) + \mathcal{O}(\rho^3) \Big) \circ h_1^2 \Big].
\end{aligned}
\end{equation}

Restoring now the dependence on the data $(e_0,e_2)$, and arguing as above, one arrives to equations of the form
\begin{equation}
\int_{\Sigma^\sigma} f_\sigma=\int_{\Sigma^\sigma} f_\sigma \mid_{(e_0,e_2)=(0,0)}+ \int_\Gamma P'_0 e_0+P'_2 e_2,
\end{equation}
for some constants $P'_0$ and $P'_2$ depending only on the dimension $m$ and the radii $R_\sigma$'s. For each $\sigma=0,1,2$ we solve uniquely the system
\begin{equation}\label{eq.systemforE}
\begin{cases}
\Delta E_\sigma-E_\sigma=0 \quad \text{on } \Gamma,\\
\partial_{\nu^\sigma} E_\sigma=-P'_0 e_0-P'_2 e_2 \quad \text{on } \Gamma,
\end{cases}
\end{equation}
so that we obtain
\begin{equation}
\int_{\Sigma^\sigma} f_\sigma=\int_{\Sigma^\sigma} f_\sigma\mid_{(e_0,e_2)=(0,0)}+ E_\sigma,
\end{equation}
and we can finally set $f_\sigma= f_\sigma\mid_{(e_0,e_2)=(0,0)}+ E_\sigma$. Notice that this choice of $f_\sigma$ depends Lipschitz-continuously on the data $(e_0,e_2)$. In fact, for every $k\in \mathbb{N}$ there exist a constant $D_k$ such that the solution to \eqref{eq.systemforE} satisfies $\norm{E}_{C^{k+1,\alpha}(\Sigma)} \le D_k\norm{(e_0,e_2)}_{C^{k,\alpha}(\Gamma) \times C^{k,\alpha}(\Gamma)}$ and, given two pairs of data $(e_0,e_2)$ and $(e_0',e_2')$, then $\norm{E-E'}_{C^{k+1,\alpha}(\Sigma)} \le D_k\norm{(e_0-e_0',e_2-e_2')}_{C^{k,\alpha}(\Gamma) \times C^{k,\alpha}(\Gamma)}$, where we denoted by $E$ and $E'$ the solutions relatives to the respective data $(e_0,e_2)$ and $(e_0',e_2')$.
With the scope of approaching the problem through a fixed point argument, we rewrite the first equation in \eqref{eq.systemY} as
\begin{equation}\label{eq.definitionfunctionalH}
Y_\sigma=\mathfrak{H}_\sigma(\omega_\sigma,v_\sigma,Y_\sigma):=\Div^{-1}(f_\sigma), \quad \sigma=0,1,2.
\end{equation}
We stress  that this is a system of equations coupling the $(w_\sigma,Y_\sigma)$'s with each other, in contrast to \eqref{eq.systemwrewritten}, which could in principle be solved in each sheet separately.
\subsection{Fixed Point Argument}
We are finally ready to set up the fixed point argument in order to solve our problem. Slightly abusing notation, we will drop the index $\sigma$ and consider all the quantities involved as three-vectors. We will always implicitly assume the boundary conditions \eqref{eq.junctionGamma1}, \eqref{eq.boundaryconditionw}, \eqref{eq.boundaryconditionu} and \eqref{eq.generalequiangularity}. Consider the operator
\begin{equation*}
\resizebox{0.95\hsize}{!}{ $(\mathfrak{F},\mathfrak{G},\mathfrak{H}):\big(C^{0,\alpha}(\Sigma)\cap V^{\perp} \big)\times \big(C^{0,\alpha}(\Sigma)\cap V \big)\times C^{0,\alpha}(\Sigma;T\Sigma) \longrightarrow \big(C^{0,\alpha}(\Sigma)\cap V^{\perp} \big)\times \big(C^{0,\alpha}(\Sigma)\cap V \big)\times C^{0,\alpha}(\Sigma;T\Sigma) $}
\end{equation*}
induced by the functions $\mathfrak{F}_\sigma$'s, $\mathfrak{G}_\sigma$'s and $\mathfrak{H}_\sigma$'s previously introduced in \eqref{eq.systemwrewritten} and \eqref{eq.definitionfunctionalH}. 
Using the properties of $L(w,Y)$ and $Q^{(2)}(w,Y)$, we are going to show that the operator $(\mathfrak{F},\mathfrak{G},\mathfrak{H})$ has a unique fixed point in a ball around the origin, of $C^{0,\alpha}-$radius $\rho^2$ for any $\rho<\rho_3$ and $(e_0,e_2)\in C^{0,\alpha}(\Gamma) \times C^{0,\alpha}(\Gamma)$ small enough. Here the threshold $\rho_3$ depends on $(p,\s)$ and hence, by compactness, ultimately only on $UTM$. Furthermore, this fixed point will depend Lipschitz-continuously on the data $(e_0,e_2)$; this property will reveal crucial in solving the original problem, where $(e_0,e_2)=((e_0)_{(p,\s),\rho}(w,Y),(e_2)_{(p,\s),\rho}(w,Y))$. It is equivalent to consider the direct product of three balls $\mathcal{U}:=B(0,c_1 \rho^2)\times B(0,c_2 \rho^2)\times B(0,c_3 \rho^2)\subseteq \big(C^{0,\alpha}(\Sigma)\cap V^{\perp} \big)\times \big(C^{0,\alpha}(\Sigma)\cap V \big)\times C^{0,\alpha}(\Sigma;T\Sigma )$ for some constants $c_1,c_2,c_3>0$ instead of an actual ball in the product space. Also, let us consider data $(e_0,e_2) \in B(0,c_4 \rho^2) \times B(0,c_5 \rho^2) \subset C^{0,\alpha}(\Gamma) \times C^{0,\alpha}(\Gamma)$. A natural approach would be to show that $(\mathfrak{F},\mathfrak{G},\mathfrak{H})$ is a contraction in $\mathcal{U}$; unfortunately, the operator $\mathfrak{H}$ is not a contraction, if seen as a function of the triple $(\omega,v,Y)$, see Remark \ref{rem.Hnoncontraction} below. Therefore we outline the following plan: 
\begin{itemize}
\item we first show that $\mathfrak{H}(\omega,v,\cdot)$ is a contraction in a small enough ball $B(0,c_3 \rho^2)\subseteq C^{0,\alpha}(\Sigma;T\Sigma )$ as above for any fixed $(\omega,v)\in B(0,c_1 \rho^2)\times B(0,c_2 \rho^2)\subseteq \big(C^{0,\alpha}(\Sigma)\cap V^{\perp} \big)\times \big(C^{0,\alpha}(\Sigma)\cap V \big)$ and any fixed data $(e_0,e_2) \in B(0,c_4 \rho^2) \times B(0,c_5 \rho^2) \subset C^{0,\alpha}(\Gamma) \times C^{0,\alpha}(\Gamma)$;
\item we show the Lipschitz-continuity of the unique solution $Y=Y(\omega,v)$ found as fixed point of the 
previous contraction as a function of $(\omega,v)$;
\item we show the Lipschitz-continuity of the unique solution $Y=Y(e_0,e_2)$ found above as a function of the data $(e_0,e_2)$;
\item we prove that $(\mathfrak{F}(\cdot,\cdot,Y(\cdot,\cdot)),\mathfrak{G}(\cdot,\cdot,Y(\cdot,\cdot)))$ is a contraction of the set $B(0,c_1 \rho^2)\times B(0,c_2 \rho^2)$ for any fixed data $(e_0,e_2) \in B(0,c_4 \rho^2) \times B(0,c_5 \rho^2) \subset C^{0,\alpha}(\Gamma) \times C^{0,\alpha}(\Gamma)$;
\item finally, combining the results obtained in the previous points, we find a unique  solution $(\omega,v)$ associated to the data $(e_0,e_2)=((e_0)_{(p,\s),\rho}(w,Y),(e_2)_{(p,\s),\rho}(w,Y))$.
\end{itemize}

When carrying out this plan, we will produce several estimates depending on constants $c,c_1,c_2,c_3,c_4$ and $c_5>0$, whose values may change from line to line, but remain always finite and depend only on the dimension $m$, and the geometries of $\Sigma \subset \R^{m+1}$ and $UTM$. It is important to keep in mind that the normal component $w$ is given by the sum of the kernel component $v$ and the orthogonal component $\omega$. Let us start with the proof of the first point above. 

$\bullet$ First of all, we show that $\mathfrak{H}$ sends $B(0,c_3 \rho^2)$ in itself , if we fix data $(e_0,e_2) \in B(0,c_4 \rho^2) \times B(0,c_5 \rho^2)$ and normal components $(\omega,v)\in B(0,c_1 \rho^2)\times B(0,c_2 \rho^2)$ for suitable constants $c_1,c_2,c_3,c_4$ and $c_5>0$. Here the values of these constants depend only on the radii $R_\sigma$'s, the dimension $m$ and on $UTM$. Equations \eqref{eq.definitionfunctionalH}, \eqref{eq.definitionf1}, \eqref{eq.definitionf0} and \eqref{eq.definitionf2}, together with the convention on the $L$-$Q^{(2)}$-$\mathcal{O}$-terms, yield
\begin{align*}
&\norm{\mathfrak{H}(\omega,v,Y)}_{C^{0,\alpha}(\Sigma)}\le \norm{\mathfrak{H}(\omega,v,Y)}_{C^{1,\alpha}(\Sigma)} \\ &\le c\norm{f}_{C^{0,\alpha}(\Sigma)}\le c \Big( \norm{(\omega,v)}_{C^{0,\alpha}(\Sigma)}+\rho^2 \norm{b}_{C^{0,\alpha}(\Sigma)}+\norm{\rho^2 L(\omega+v,Y)}_{C^{0,\alpha}(\Sigma)}\\
&+\norm{Q^{(2)}(\omega+v,Y)}_{C^{0,\alpha}(\Sigma)} +\norm{\mathcal{O}(\rho^3)}_{C^{0,\alpha}(\Sigma)}  \Big)+\norm{E}_{C^{0,\alpha}(\Sigma)}\le c \rho^2 +c \norm{(e_0,e_2)}_{C^{0,\alpha}(\Gamma)\times C^{0,\alpha}(\Gamma)}\le c \rho^2.
\end{align*}
We now prove the contraction property for $\mathfrak{H}(\omega,v, \cdot)$. Consider a fixed pair $(\omega,v) \in B(0,c_1 \rho^2)\times B(0,c_2 \rho^2)$, two points $Y_1,Y_2 \in B(0,c_3 \rho^2)$, and fixed data $(e_0,e_2) \in B(0,c_4 \rho^2) \times B(0,c_5 \rho^2)$, and compute (recall that by Remark \ref{rem.Ogeometric} there is cancellation of the $\mathcal{O}(\rho^3)$ terms, whereas the terms involving $E$ cancel since the data $(e_0,e_2)$ is fixed)
\begin{align*}
\| \mathfrak{H}(\omega,v,Y_1)&-\mathfrak{H}(\omega,v,Y_2)\|_{C^{0,\alpha}(\Sigma)}\le \norm{\mathfrak{H}(\omega,v,Y_1)-\mathfrak{H}(\omega,v,Y_2)}_{C^{1,\alpha}(\Sigma)} \le c \norm{f(\omega,v,Y_1)-f(\omega,v,Y_2)}_{C^{0,\alpha}(\Sigma)}\\
\le& c \rho^2 \norm{L((\omega+v,Y_1)-(\omega+v,Y_2))}_{C^{0,\alpha}(\Sigma)}+ c\norm{Q^{(2)}(\omega+v,Y_1)-Q^{(2)}(\omega+v,Y_2)}_{C^{0,\alpha}(\Sigma)}\\
\le& c \rho^2 \norm{(\omega+v,Y_1)-(\omega+v,Y_2)}_{C^{0,\alpha}(\Sigma)}+c\big(\norm{(\omega+v,Y_1)}_{C^{0,\alpha}(\Sigma)}+\norm{(\omega+v,Y_2)}_{C^{0,\alpha}(\Sigma)}\big) \\
&\cdot\norm{(\omega+v,Y_1)-(\omega+v,Y_2)}_{C^{0,\alpha}(\Sigma)} \le c \rho^2 \norm{Y_1-Y_2}_{C^{0,\alpha}(\Sigma)}.
\end{align*}
Thus, we can always chose $\rho_3$ small enough so that $c\rho^2<1$, ensuring that $\mathfrak{H}$ is a contraction of $B(0,c_3 \rho^2)$. Therefore, for $(\omega,v) \in B(0,c_1 \rho^2)\times B(0,c_2 \rho^2)$ and $(e_0,e_2) \in B(0,c_4 \rho^2) \times B(0,c_5 \rho^2)$ there exists a unique element $Y=Y(\omega,v) \in B(0,c_3 \rho^2)$ which is a fixed point for $\mathfrak{H}$. 

$\bullet$ We now show that this solution $Y(\omega,v)$ depends Lipschitz continuosly on $(\omega,v)$; this will play a crucial role in establishing the fourth point. Fix data $(e_0,e_2) \in B(0,c_4 \rho^2) \times B(0,c_5 \rho^2)$. Take two pairs $(\omega_1,v_1),(\omega_2,v_2) \in  B(0,c_1 \rho^2)\times B(0,c_2 \rho^2)$ and denote their associated fixed points by $Y_1:=Y(\omega_1,v_1)$ and $Y_2:=Y(\omega_2,v_2)$. Exploting once again the properties of the $L$-$Q^{(2)}$-$\mathcal{O}$-terms as above, and recalling the expansions \eqref{eq.definitionf1}, \eqref{eq.definitionf0} and \eqref{eq.definitionf2}, we obtain
\begin{small}
\begin{align*}
&\norm{Y(\omega_1,v_1)-Y(\omega_2,v_2)}_{C^{0,\alpha}(\Sigma)}=\norm{\mathfrak{H}(\omega_1,v_1,Y_1)-\mathfrak{H}(\omega_2,v_2,Y_2)}_{C^{0,\alpha}(\Sigma)}\le c \norm{v_1-v_2}_{C^{0,\alpha}(\Sigma)}+c\norm{\omega_1-\omega_2}_{C^{0,\alpha}(\Sigma)} \\
&+ c \rho^2 \norm{L((\omega_1+v_1,Y_1)-(\omega_2+v_2,Y_2))}_{C^{0,\alpha}(\Sigma)}+ C\norm{Q^{(2)}(\omega_1+v_1,Y_1)-Q^{(2)}(\omega_2+v_2,Y_2)}_{C^{0,\alpha}(\Sigma)}\\
&\le (c+c \rho^2) \norm{(\omega_1,v_1)-(\omega_2,v_2)}_{C^{0,\alpha}(\Sigma)}+ c \rho^2 \norm{Y_1-Y_2}_{C^{0,\alpha}(\Sigma)}.
\end{align*}
\end{small}
For $\rho_3$ small enough, we can always absorb the last summand to the left-hand-side, and get the claimed Lipschitz continuity.
\begin{remark}\label{rem.Hnoncontraction}
From the calculation above, it is clear that the operator $\mathfrak{H}$ is not a contraction in $(\omega,v,Y)$ due of the presence of the linear summands $v_\sigma +\frac{w_\sigma}{R_\sigma}$ at the $0^{th}$-order in $\rho$ in the expansions \eqref{eq.definitionf1},\eqref{eq.definitionf0} and \eqref{eq.definitionf2}, exactly as anticipated. This indeed produce the constant $c$ not multiplied by any $\rho^2$ factor in the formula above.
\end{remark}
$\bullet$ For what regards the third point, consider fixed normal components $(\omega,v) \in  B(0,c_1 \rho^2)\times B(0,c_2 \rho^2)$ and two pairs of boundary data $(e_0,e_2),(e_0',e_2')\in C^{0,\alpha}(\Gamma) \times C^{0,\alpha}(\Gamma)$ for \eqref{eq.generalequiangularity}. Denoting by $Y:=Y(e_0,e_2)$ and $Y':=Y(e_0',e_2')$ their associated \emph{fixed points}, we compute
\begin{small}
\begin{align*}
&\norm{Y(e_0,e_2)-Y(e_0',e_2')}_{C^{0,\alpha}(\Sigma)}=\norm{\mathfrak{H}(Y(e_0,e_2))-\mathfrak{H}(Y(e_0',e_2'))}_{C^{0,\alpha}(\Sigma)} \le c \rho^2 \norm{L((\omega+v,Y)-(\omega+v,Y'))}_{C^{0,\alpha}(\Sigma)}\\
&+ c\norm{Q^{(2)}(\omega+v,Y)-Q^{(2)}(\omega+v,Y')}_{C^{0,\alpha}(\Sigma)}+ c \norm{E-E'}_{C^{0,\alpha}(\Sigma)} \le c \rho^2 \norm{Y-Y'}_{C^{0,\alpha}(\Sigma)}+ c \norm{E-E'}_{C^{1,\alpha}(\Sigma)} \\
&\le c \rho^2 \norm{Y(e_0,e_2)-Y(e_0',e_2')}_{C^{0,\alpha}(\Sigma)}+c \norm{(e_0,e_2)-(e_0',e_2')}_{C^{0,\alpha}(\Gamma) \times C^{0,\alpha}(\Gamma)}.
\end{align*}
\end{small}
Here we have used the continuity of the solutions $E_\sigma$'s of \eqref{eq.systemforE} from the boundary data $(e_0,e_2)$. Once again, for $\rho_3>0$ small enough we can absorb the first summand on the right-hand-side, and obtain the claimed Lipschitz continuity.

$\bullet$ Let us prove that $(\mathfrak{F}(\cdot,\cdot,Y(\cdot,\cdot)),\mathfrak{G}(\cdot,\cdot,Y(\cdot,\cdot)))$ is a contraction of the set $B(0,c_1 \rho^2)\times B(0,c_2 \rho^2)$ for any fixed data $(e_0,e_2) \in B(0,c_4 \rho^2) \times B(0,c_5 \rho^2) \subset C^{0,\alpha}(\Gamma) \times C^{0,\alpha}(\Gamma)$. Firsly, we show the stronger statement that $(\mathfrak{F},\mathfrak{G})$ sends $B(0,c_1 \rho^2)\times B(0,c_2 \rho^2)$ into itself \emph{for any} $Y \in B(0,c_3 \rho^2)$, so in particular the same property is verified when restricted to the solutions $Y(\omega,v)$ constructed above. Recalling the system \eqref{eq.systemwrewritten}, we can argue as above to get
\begin{align*}
&\norm{\mathfrak{F}(\omega,v,Y)}_{C^{0,\alpha}(\Sigma)}\le \norm{\mathfrak{F}(\omega,v,Y)}_{C^{2,\alpha}(\Sigma)} \\
&\le c\norm{ \Pi_{V^{\perp}}\Big( \rho^2 L(\omega+v,Y)
+ Q^{(2)}(\omega+v,Y) -\rho^2 b_\sigma+ \mathcal{O}(\rho^3)\Big)}_{C^{0,\alpha}(\Sigma)}+c \norm{(e_0,e_2)}_{C^{0,\alpha}(\Gamma)\times C^{0,\alpha}(\Gamma)}\\
&\le c\norm{\rho^2 L(\omega+v,Y)+ Q^{(2)}(\omega+v,Y) -\rho^2 b+ \mathcal{O}(\rho^3)}_{C^{0,\alpha}(\Sigma)}+c \norm{(e_0,e_2)}_{C^{0,\alpha}(\Gamma)\times C^{0,\alpha}(\Gamma)}\\
&\le c\rho^2 \norm{(\omega,v,Y)}_{C^{0,\alpha}(\Sigma)}+ c(\norm{(\omega,v,Y)}_{C^{0,\alpha}(\Sigma)})^2 +\rho^2 \norm{b}_{C^{0,\alpha}(\Sigma)}+\norm{ \mathcal{O}(\rho^3)}_{C^{0,\alpha}(\Sigma)}\\
&+c \norm{(e_0,e_2)}_{C^{0,\alpha}(\Gamma)\times C^{0,\alpha}(\Gamma)}\le c \rho^2;\\
&\norm{\mathfrak{G}(\omega,v,Y)}_{C^{0,\alpha}}\le \norm{\rho^2 L(\omega+v,Y)
+ Q^{(2)}(\omega+v,Y) +\rho^2 b+ \mathcal{O}(\rho^3)}_{C^{0,\alpha}} \le c \rho^2.
\end{align*}
We are now ready to prove that $(\mathfrak{F}(\cdot,\cdot,Y(\cdot,\cdot)),\mathfrak{G}(\cdot,\cdot,Y(\cdot,\cdot)))$ is a contraction of $B(0,c_1 \rho^2)\times B(0,c_2 \rho^2)$. Recall once again our convention from Remark \ref{rem.Ogeometric} on purely geometric terms $\mathcal{O}$. Consider two triplets $(\omega_1,v_1,Y_1), (\omega_2,v_2,Y_2) \in \Omega$, where we have set $Y_1:=Y(\omega_1,v_1)$ and $Y_2=Y(\omega_2,v_2)$, and compute
\begin{align*} 
&\norm{\mathfrak{F}(\omega_1,v_1,Y_1)-\mathfrak{F}(\omega_2,v_2,Y_2)}_{C^{0,\alpha}(\Sigma)}\le \norm{\mathfrak{F}(\omega_1,v_1,Y_1)-\mathfrak{F}(\omega_2,v_2,Y_2)}_{C^{2,\alpha}(\Sigma)} \\ &\le c\rho^2 \norm{L((\omega_1+v_1,Y_1)-(\omega_2+v_2,Y_2))}_{C^{0,\alpha}(\Sigma)}\\
&+ c\norm{Q^{(2)}(\omega_1+v_1,Y_1)-Q^{(2)}(\omega_2+v_2,Y_2)}_{C^{0,\alpha}(\Sigma)} \\ &\le c \rho^2 \norm{(\omega_1,v_1,Y_1)-(\omega_2,v_2,Y_2)}_{C^{0,\alpha}(\Sigma)}\le c \rho^2 \norm{(\omega_1,v_1)-(\omega_2,v_2)}_{C^{0,\alpha}(\Sigma)}.
\end{align*}
In the last inequality we have used the Lipschitz continuity from the second point proved above. For the functional $\mathfrak{G}$ the situation is similar
\begin{align*} 
&\norm{\mathfrak{G}(\omega_1,v_1,Y_1)-\mathfrak{G}(\omega_2,v_2,Y_2)}_{C^{0,\alpha}(\Sigma)}\le c\rho^2 \norm{L((\omega_1+v_1,Y_1)-(\omega_2+v_2,Y_2))}_{C^{0,\alpha}(\Sigma)}\\
&+ c\norm{Q^{(2)}(\omega_1+v_1,Y_1)-Q^{(2)}(\omega_2+v_2,Y_2)}_{C^{0,\alpha}(\Sigma)} \\ & \le c \rho^2 \norm{(\omega_1,v_1,Y_1)-(\omega_2,v_2,Y_2)}_{C^{0,\alpha}(\Sigma)}\le c \rho^2 \norm{(\omega_1,v_1)-(\omega_2,v_2)}_{C^{0,\alpha}(\Sigma)}.
\end{align*}
Therefore, for $\rho_3$ small enough, the operator $(\mathfrak{F}(\cdot,\cdot,Y(\cdot,\cdot)),\mathfrak{G}(\cdot,\cdot,Y(\cdot,\cdot)))$ is a contraction as required, and hence there exists a unique fixed point $(\omega_{(e_0,e_2)},v_{(e_0,e_2)},Y_{(e_0,e_2)})\in \mathcal{U}$ of the operator $(\mathfrak{F},\mathfrak{G},\mathfrak{H})$ \textbf{for any boundary data} $(e_0,e_2) \in B(0,c_4 \rho^2) \times B(0,c_5 \rho^2)$.

$\bullet$ Finally, we combine the results obtained in the previous points to find a unique fixed solution $(\omega,v)$ associated to the data $(e_0,e_2)=((e_0)_{(p,\s),\rho}(\omega+v,Y),(e_2)_{(p,\s),\rho}(\omega+v,Y))$, coming from the equi-angularity condition \eqref{eq.perturbedequiangularity}. Choosing suitably the constants $c_1,c_2$ and $c_3$, we see that for $(\omega,v,Y) \in B(0,c_1 \rho^2)\times B(0,c_2 \rho^2) \times B(0,c_3 \rho^2)$, the data $((e_0)_{(p,\s),\rho}(\omega+v,Y),(e_2)_{(p,\s),\rho}(\omega+v,Y))$ belongs to $B(0,c_4 \rho^2) \times B(0,c_5 \rho^2)$.

A completely analogouos argument to the one described above allows us to prove that the data $((e_0)_{(p,\s),\rho}(w,Y),(e_2)_{(p,\s),\rho}(w,Y))$ depends Lipschitz continuously on $(\omega,v,Y)$ with Lispchitz constant as small as we wish after possibly reducing the threshold $\rho_3>0$, see \eqref{eq.perturbedequiangularity}. Using this fact, and the result in the third point (continuity from the boundary data $(e_0,e_2)$), we can carry out the same calculations done in point one to obtain a unique fixed point $Y=Y(\omega,v)$ satisfying the equi-angularity condition \eqref{eq.perturbedequiangularity}. The dependence of the solution $Y$ from $(\omega,v)$ remains Lipschitz continuous, and in particular we can repeat the argument of point four to obtain that the operator $(\mathfrak{F}(\cdot,\cdot,Y(\cdot,\cdot)),\mathfrak{G}(\cdot,\cdot,Y(\cdot,\cdot)))$ is a contraction, and hence the existence and uniqueness of a fixed point $(\omega_{(p,\s),\rho},v_{(p,\s),\rho},Y_{(p,\s),\rho})\in \mathcal{U}$ of the operator $(\mathfrak{F},\mathfrak{G},\mathfrak{H})$ under the boundary conditions \eqref{eq.perturbedequiangularity}, as required.
\vspace{0.5cm}

From  elliptic regularity theory and the bounds in \eqref{eq.ellipticestimatediv}, and since it is constructed as a fixed point, we can bootstrap the regularity of $(\omega_{(p,\s),\rho},v_{(p,\s),\rho},Y_{(p,\s),\rho})\in \mathcal{U}$, and get its smoothness; indeed, the boundary data are still of class $C^{0,\alpha}$, since they depend only of $(w,Y)$ and their first derivatives. Notice that the  smoothness of $v_{(p,\s),\rho}$ could have been deduced a-priori since the space $V$ is classified in \cite{dim}. Moreover, for every $k$ there are constants $c_k$ with
\begin{equation}
\norm{(\omega_{(p,\s),\rho},v_{(p,\s),\rho},Y_{(p,\s),\rho})}_{C^{k,\alpha}(\Sigma)} \le c_k \rho^2.
\end{equation}

In the symmetric case, we set $V:=Ker(\Delta_{\Sigma^0}) \times Ker(R_1 \Delta_{\Sigma^1}+\tfrac{m}{R_1}) \times Ker(R_2 \Delta_{\Sigma^2}+\tfrac{m}{R_2})$, and argue similarly as above to obtain once again a unique smooth solution $(\omega_{(p,\s),\rho},v_{(p,\s),\rho},Y_{(p,\s),\rho})$ with $C^{k,\alpha}(\Sigma)$-norms of order $\mathcal{O}(\rho^2)$ to the system
\begin{equation}
\begin{cases}
\rho R \mathring{H}_\sigma-m + v^\sigma=0, \quad \sigma=1,2;\\
\rho \tfrac{\sqrt{3}}{2} R \mathring{H}_0+ v^0=0;\\
\Div_{\Sigma^\sigma} (Y_\sigma)=f_\sigma, \quad \sigma=0,1,2,
\end{cases}
\end{equation}
under boundary conditions \eqref{eq.junctionGamma}, \eqref{eq.boundaryconditionu}, \eqref{eq.boundaryconditionu} and \eqref{eq.perturbedequiangularity}. We leave the details to the reader. 

In both the asymmetric and symmetric cases, the dependence of the solution $(\omega_{(p,\s),\rho},v_{(p,\s),\rho},Y_{(p,\s),\rho})$ with respect to $(p,\s) \in UTM$ is clearly smooth, as the operators involved depend smoothly on $(p,\s)$, and one can even show that 
\begin{equation}\label{eq.nablaboundsolution}
\norm{\nabla^{k}_{p,\s} \omega_{(p,\s),\rho}}_{C^{2,\alpha}}+\norm{\nabla^{k}_{p,\s} v_{(p,\s),\rho}}_{C^{2,\alpha}}+\norm{\nabla^{k}_{p,\s} Y_{(p,\s),\rho}}_{C^{1,\alpha}} \le d_k \rho^2,
\end{equation}
for some constant $d_k>0$. Inspired by the analogous concept in the case of hypersurfaces enclosing a single volume, studied in \cite{nar}, we give the following definition, corresponding to Definition $1.2$ in \cite{nar}.
\begin{definition}[Pseudo-Double Bubbles]\label{def.pseudobubble}
For any $\rho>0$ and $(p,\s)\in UTM$, we call the double bubble $\Sigma_{(p,\s),\rho}^{\flat}:=\Sigma_{(p,\s),\rho}(\omega_{(p,\s),\rho}+v_{(p,\s),\rho},Y_{(p,\s),\rho})$ the \emph{pseudo-double bubble} at the point $(p,\s)$ and scale $\rho$, where $\omega_{(p,\s),\rho}$, $v_{(p,\s),\rho}$ and $Y_{(p,\s),\rho}$ are constructed  above.
\end{definition}
For the convenience of the reader we recall the equations satisfied by pseudo-double bubbles in the following proposition.
\begin{proposition}\label{prop.pseudoproperty}
For fixed $\rho>0$ and $(p,\s)\in UTM$, the pseudo-bubble $\Sigma_{(p,\s),\rho}^{\flat}$ and the functions $(\omega_{(p,\s),\rho},v_{(p,\s),\rho},Y_{(p,\s),\rho})$ defining it satisfy:
\begin{itemize}
\item The perturbation is admissible, i.e. $(\omega_{(p,\s),\rho}+v_{(p,\s),\rho},Y_{(p,\s),\rho}) \in \mathcal{C}_{amm}$;
\item The perturbation satisfies the strong equi-angularity condition $\mathring{\nu}^0+\mathring{\nu}^1+\mathring{\nu}^2=0$;
\item In the asymmetric case, the mean curvatures $\mathring{H}_\sigma$'s of the pseudo-double bubble $\Sigma_{(p,\s),\rho}^{\flat}$ satisfy 
\begin{equation}
\begin{cases}
\rho R_\sigma \mathring{H}_\sigma-m + v_{(p,\s),\rho}^\sigma=0, \quad \sigma=0,1,2;\\
\Div_{\Sigma^\sigma} ((Y_{(p,\s),\rho})_\sigma)=f_\sigma, \quad \sigma=0,1,2.
\end{cases}
\end{equation}
In the symmetric case they verify
\begin{equation}
\begin{cases}
\rho R \mathring{H}_\sigma-m + v_{(p,\s),\rho}^\sigma=0, \quad \sigma=1,2;\\
\rho \tfrac{\sqrt{3}}{2} R \mathring{H}_0+ v_{(p,\s),\rho}^0=0;\\
\Div_{\Sigma^\sigma} ((Y_{(p,\s),\rho})_\sigma)=f_\sigma, \quad \sigma=0,1,2.
\end{cases}
\end{equation}
\end{itemize}

\end{proposition}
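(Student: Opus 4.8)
The statement merely records what the fixed-point scheme of this section has produced, so the plan is to read off each of the three items from that construction. I would present the asymmetric case in detail, the symmetric one being identical once the Jacobi operator on the flat sheet $\Sigma^0$ is replaced by $\Delta_{\Sigma^0}$ and Theorem \ref{th.Hperturbeddisk} is used in place of Theorem \ref{th.Hperturbed} on that sheet. For admissibility, I would recall that $(\omega_{(p,\s),\rho},v_{(p,\s),\rho},Y_{(p,\s),\rho})$ is by construction the fixed point of $(\mathfrak{F},\mathfrak{G},\mathfrak{H})$ lying in $\mathcal{U}$, hence, after the bootstrap of regularity, $\norm{\omega_{(p,\s),\rho}+v_{(p,\s),\rho}}_{C^{2,\alpha}}$ and $\norm{Y_{(p,\s),\rho}}_{C^{1,\alpha}}$ are bounded by $c\rho^2$, which is below the threshold $\delta$ of Definition \ref{def.admissibleperturbationdefinition} once $\rho_3$ is taken small enough. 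The boundary conditions are built into the scheme: \eqref{eq.boundaryconditionw} comes from the splitting $w=\omega+v$ together with the decomposition $V\oplus V^\perp$, while \eqref{eq.boundaryconditionu} and the stronger form \eqref{eq.junctionGamma1} (hence \eqref{eq.junctionGamma}) are precisely the Dirichlet/Robin data prescribed in the divergence problem \eqref{eq.systemY}. This yields $(\omega_{(p,\s),\rho}+v_{(p,\s),\rho},Y_{(p,\s),\rho})\in\mathcal{C}_{amm}$.

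For the strong equi-angularity, the key is that the fixed point was produced with the self-referential choice $(e_0,e_2)=((e_0)_{(p,\s),\rho}(w,Y),(e_2)_{(p,\s),\rho}(w,Y))$, so it solves \eqref{eq.perturbedequiangularity}. As noted just before \eqref{eq.perturbedequiangularity}, by \eqref{eq.suminnernormals} and the relation $N^1=N^0+N^2$ valid for the standard double bubble, the vanishing of the two projections in \eqref{eq.perturbedequiangularity} onto $N^0$ and $N^2$ is equivalent to $\mathring{\nu}^0+\mathring{\nu}^1+\mathring{\nu}^2=0$, since the three conormals lie in the two-dimensional normal plane of $\Gamma_{(p,\s),\rho}(w,Y)$ in $M$, which is spanned by $N^0$ and $N^2$. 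Hence the strong equi-angularity condition holds.

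For the mean-curvature equations I would write $w=\omega+v$ with $v\in V$. Since the Jacobi-type operator $R_\sigma\Delta_{\Sigma^\sigma}+\tfrac{m}{R_\sigma}$ annihilates $v_\sigma$ by the definition of $V$, Theorem \ref{th.Hperturbed} expresses $\rho R_\sigma\mathring{H}_\sigma-m$ as $(R_\sigma\Delta_{\Sigma^\sigma}\omega_\sigma+\tfrac{m}{R_\sigma}\omega_\sigma)$ plus the curvature term $b_\sigma$ and $\rho^2 L(w,Y)+Q^{(2)}(w,Y)+\mathcal{O}(\rho^3)$, which is exactly $-v_\sigma$ by the equation \eqref{eq.systemw} (equivalently \eqref{eq.systemwprojected}) solved at the fixed point; therefore $\rho R_\sigma\mathring{H}_\sigma-m+v_{(p,\s),\rho}^\sigma=0$. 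In the symmetric case the same argument applies on $\Sigma^1,\Sigma^2$, and on $\Sigma^0$ one uses Theorem \ref{th.Hperturbeddisk} together with $v_{(p,\s),\rho}^0\in\mathrm{Ker}(\Delta_{\Sigma^0})$. Finally, the divergence identities $\Div_{\Sigma^\sigma}((Y_{(p,\s),\rho})_\sigma)=f_\sigma$ are the first line of \eqref{eq.systemY}, holding because $Y_{(p,\s),\rho}$ is a fixed point of $\mathfrak{H}=\Div^{-1}(f_\sigma)$ and the compatibility condition \eqref{eq.compatibilitycondition} was arranged by the choices \eqref{eq.definitionf1}--\eqref{eq.definitionf2} and \eqref{eq.systemforE}.

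The only point that is not pure bookkeeping — and the mild obstacle I would flag — is the regularity invoked in the admissibility claim: the fixed point is a priori only of class $C^{0,\alpha}\times C^{0,\alpha}$, whereas $C^{2,\alpha}\times C^{1,\alpha}$ (indeed smoothness) is needed. This is obtained by bootstrapping, using elliptic regularity for the three $w_\sigma$-equations in \eqref{eq.systemw} and the Hodge-type estimate \eqref{eq.ellipticestimatediv} for the $Y_\sigma$-equations, since every right-hand side depends only on $(w,Y)$ and their first derivatives, so each iteration of the scheme gains a derivative; the norm bounds then carry over and one obtains the smoothness of $\Sigma_{(p,\s),\rho}^{\flat}$.
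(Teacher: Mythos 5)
Your proposal is correct and matches the paper's treatment: the proposition is simply a summary of the fixed-point construction of Section~\ref{sec.pseudodb}, and each item is read off exactly as you do — admissibility from the $C^{0,\alpha}$ bounds in $\mathcal{U}$ plus the bootstrap, strong equi-angularity from the self-referential choice of $(e_0,e_2)$ solving \eqref{eq.perturbedequiangularity}, the mean-curvature identities from combining Theorems \ref{th.Hperturbed}/\ref{th.Hperturbeddisk} with \eqref{eq.systemw} and the fact that $v_\sigma$ lies in the kernel of the Jacobi operator, and the divergence identities from \eqref{eq.systemY}. Your observation that the only substantive point is the regularity bootstrap is accurate, and you handle it as the paper does (via elliptic regularity for the $w_\sigma$'s and the Hodge-type estimate \eqref{eq.ellipticestimatediv} for the $Y_\sigma$'s).
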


\section{Existence of Constant Mean Curvature Double Bubbles}\label{sec.existence}
In this section we aim to prove Theorems \ref{th.mainasymmetric} and \ref{th.mainsymmetric}, that is the existence of a constant mean curvature double bubble at any critical point of a suitable auxiliary function. We start generalising an idea of Kapouleas (for single enclosed volume) to smooth Euclidean double bubbles, see \cite{kap}. Consider an arbitrary smooth double bubble $\hat{\Sigma}\subset \R^{m+1}$ as in Section \ref{sec.preliminary}; in particular, we keep the same notation for the several associated geometric objects, with a hat $\hat{\cdot}$, and the same convention on the normal vector fields. Given any two constants $H_1>H_2>0$, we  define for such a double bubble a function $\mathcal{E}$ given by
\begin{equation}
\mathcal{E}(\hat{\Sigma}):= |\hat{\Sigma}^0|_m+|\hat{\Sigma}^1|_m+|\hat{\Sigma}^2|_m-H_1 |\hat{B}_1|_{m+1}-H_2 |\hat{B}_2|_{m+1}.
\end{equation}
For any vector field $Z$ in $\R^{m+1}$, we can consider the family of double bubbles $\hat{\Sigma}_t$ generated by the flow of $Z$, and consider the first variation of $\mathcal{E}$ along this family which, once set $H_0:=H_1-H_2$, has the following expression (we drop the volume elements since already implicit in the domains of integration considered)
\begin{equation}
\resizebox{0.93\hsize}{!}{ $\partial_t \mathcal{E}(\hat{\Sigma}_t) \mid_{t=0}= \int_{\hat{\Sigma}^0} (H_0-\hat{H}) \scal{Z}{\hat{N}^0} + \int_{\hat{\Sigma}^1} (H_1-\hat{H}) \scal{Z}{\hat{N}^1} + \int_{\hat{\Sigma}^2} (H_2-\hat{H}) \scal{Z}{\hat{N}^2} -\int_{\hat{\Gamma}} \scal{Z}{\hat{\nu}_0+\hat{\nu}_1+\hat{\nu}_2}.$}
\end{equation}
In case the vector field $Z$ is a Killing vector field, the flow generated by $Z$ acts by isometries on $\R^{m+1}$, and in particular $\mathcal{E}(\hat{\Sigma}_t)$ is constant in $t$, so we deduce
\begin{equation}
\int_{\hat{\Sigma}^0} (H_0-\hat{H}) \scal{Z}{\hat{N}^0} + \int_{\hat{\Sigma}^1} (H_1-\hat{H}) \scal{Z}{\hat{N}^1} + \int_{\hat{\Sigma}^2} (H_2-\hat{H}) \scal{Z}{\hat{N}^2} -\int_{\hat{\Gamma}} \scal{Z}{\hat{\nu}_0+\hat{\nu}_1+\hat{\nu}_2}=0.
\end{equation}
Let us now assume that the smooth double bubble $\hat{\Sigma}$ has mean curvature functions of the form $\hat{H}_\sigma=H_\sigma+\scal{\hat{Z}}{\hat{N}^\sigma}$ for some constants $H_\sigma>0$ verifying $H_1=H_0+H_2$ and a Killing vector field $\hat{Z}$, and also that the sheets $\hat{\Sigma}^\sigma$ meet in an equi-angular way $\hat{\nu}_0+\hat{\nu}_1+\hat{\nu}_2=0$. Notice that these conditions are verified by a pseudo-double bubble in $\R^{m+1}$. Then choosing $Z=\hat{Z}$ we obtain
\begin{equation}
\int_{\hat{\Sigma}^0} (\scal{\hat{Z}}{\hat{N}^0})^2 + \int_{\hat{\Sigma}^1}(\scal{\hat{Z}}{\hat{N}^1})^2 + \int_{\hat{\Sigma}^2} (\scal{\hat{Z}}{\hat{N}^2})^2=0,
\end{equation}
hence $\hat{Z}\equiv 0$ and the three sheets must have constant mean curvatures $\hat{H}_\sigma=H_\sigma$.

We would like to transpose this property from the Euclidean space to a Riemannian context thanks to the use of normal coordinates, similarly to what Pacard and Xu did in the case of a single enclosed volume, see \cite{pac}. { Given a standard double bubble $\Sigma$ whose mean curvature vector is $(H_0,H_1,H_2)$, with $H_1=H_0+H_2$, there exists a threshold $\rho_3$ such that for any $\rho<\rho_3$ and $(p,\s)\in UTM$ we can construct an associated pseudo-double bubble $\Sigma_{(p,\s),\rho}^{\flat}$ as in Section \ref{sec.pseudodb}.} We define the following function $\Psi: UTM \longrightarrow \mathbb{R}$
\begin{equation}
\begin{aligned}
\resizebox{0.93\hsize}{!}{ $\Psi_\rho(p,\s):= \mathcal{E}(\Sigma_{(p,\s),\rho}^{\flat}) 
=\area((\Sigma_{(p,\s),\rho}^{\flat})^0)+\area((\Sigma_{(p,\s),\rho}^{\flat})^1)+\area((\Sigma_{(p,\s),\rho}^{\flat})^2)-\tfrac{H_1}{\rho} \vol((B_{(p,\s),\rho}^{\flat})^1)-\tfrac{H_2}{\rho} \vol((B_{(p,\s),\rho}^{\flat})^2).$}
\end{aligned}
\end{equation}

Coherently to the model flat case just described, we expect the pseudo-double bubble $\Sigma_{(p,\s),\rho}^{\flat}$ to have constant mean curvature vector { $(\tfrac{1}{\rho}H_0,\tfrac{1}{\rho}H_1,\tfrac{1}{\rho}H_2)$ at any critical point $(p,\s)$ of the function $\Psi_\rho$}. This heuristic is confirmed by the following Proposition. 
\begin{proposition}\label{prop.criticalpointCMC}
For any $\rho \in (0,\rho_3)$ and any critical point $(p,\s)$ of $\Psi_\rho$, the pseudo-double bubble $\Sigma_{(p,\s),\rho}^{\flat}$ has constant mean curvatures $(\mathring{H}_0,\mathring{H}_1,\mathring{H}_2) = (\tfrac{1}{\rho}H_0,\tfrac{1}{\rho}H_1,\tfrac{1}{\rho}H_2)$, with $\mathring{H}_1 = \mathring{H}_0 + \mathring{H}_2$, {and satisfies the equi-angularity condition $\mathring{\nu}_0+\mathring{\nu}_1+\mathring{\nu}_2=0$}. 
\end{proposition}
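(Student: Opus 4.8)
The plan is to mimic the Euclidean Kapouleas-type argument recalled just above, but working in the manifold via normal coordinates and exploiting the structure of the pseudo-double bubble equations in Proposition \ref{prop.pseudoproperty}. First I would compute the first variation of $\Psi_\rho$ along the directions coming from the parameters $(p,\s)$. Since $\Psi_\rho(p,\s) = \mathcal{E}(\Sigma_{(p,\s),\rho}^{\flat})$ where $\mathcal{E}$ (rescaled by $\rho$) is precisely the energy whose first variation was computed above, and since the pseudo-double bubble $\Sigma_{(p,\s),\rho}^{\flat}$ depends smoothly on $(p,\s)$, I differentiate $\Psi_\rho$ using the general first variation formula for $\mathcal{E}$. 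The crucial point is which vector fields $Z$ the infinitesimal motions of $(p,\s)$ induce: varying $p$ in a direction $e \in T_pM$ corresponds, in normal coordinates, to an infinitesimal translation of $\R^{m+1}$, while varying $\s$ within $UT_pM$ corresponds to an infinitesimal rotation fixing the would-be symmetry axis; in both cases the corresponding ambient vector field $Z$ is, at leading order, a Killing field of the flat metric, and the variation of $\Sigma_{(p,\s),\rho}^{\flat}$ it generates is (the exponential image of) the flow of such a $Z$ composed with the correction terms. Thus $d\Psi_\rho[(p,\s)] = 0$ for all admissible $(\dot p, \dot\s)$ forces a family of integral identities.

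The second step is to plug the pseudo-bubble equations into these identities. By Proposition \ref{prop.pseudoproperty}, in the asymmetric case $\rho R_\sigma \mathring H_\sigma - m + v_{(p,\s),\rho}^\sigma = 0$, i.e. $H_\sigma/\rho - \mathring H_\sigma = (m - \rho R_\sigma \mathring H_\sigma)/(\rho R_\sigma) - \ldots$ is, up to the correct normalization, exactly $v^\sigma_{(p,\s),\rho}/(\rho R_\sigma)$ — that is, $\mathring H_\sigma - H_\sigma/\rho$ is a function lying in the kernel space $V$ on each sheet. The first variation formula then reads, schematically,
\begin{equation*}
\sum_{\sigma} \int_{(\Sigma_{(p,\s),\rho}^{\flat})^\sigma} \big(\tfrac{H_\sigma}{\rho} - \mathring H_\sigma\big)\,\langle Z, \mathring N^\sigma\rangle \;-\; \int_{\Gamma_{(p,\s),\rho}^{\flat}} \langle Z, \mathring\nu^0 + \mathring\nu^1 + \mathring\nu^2\rangle \;=\; 0 ,
\end{equation*}
and here is where the strong equi-angularity $\mathring\nu^0+\mathring\nu^1+\mathring\nu^2 = 0$ (second bullet of Proposition \ref{prop.pseudoproperty}) is used: it kills the boundary term outright — this is precisely the "troublesome boundary term" alluded to in Subsection \ref{sub.class} and the reason for imposing the genuine equi-angularity \eqref{eq.perturbedequiangularity} rather than the linearized one. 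What remains is the statement that the function $f_\sigma := \mathring H_\sigma - H_\sigma/\rho$, which lies in $V$, is $L^2$-orthogonal on $\bigsqcup_\sigma \Sigma^\sigma$ to every field of the form $\langle Z,\mathring N^\sigma\rangle$ arising from an infinitesimal isometry. Since $V$ — by the classification in \cite{dim} — is exactly the span of the normal components of infinitesimal translations and rotations of the standard double bubble, these test fields $\langle Z, \mathring N^\sigma\rangle$, pulled back to $\Sigma$, span (a perturbation of) all of $V$; hence $f_\sigma$ is orthogonal to its own space, whence $f_\sigma \equiv 0$ and $\mathring H_\sigma = H_\sigma/\rho$.

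The main obstacle is the last matching: showing that the test functions $\langle Z, \mathring N^\sigma\rangle$ obtained by varying $(p,\s)$ really do span $V$ (up to an error that vanishes as $\rho \to 0$), so that orthogonality to all of them implies $f_\sigma = 0$. This requires (i) identifying the $2m+1$ independent motions of $(p,\s)$ — namely $m+1$ translations of $p$ together with the $m$ rotational directions of $\s$, matching exactly $\dim V = 2m+1$ from \cite{dim}; (ii) checking that the associated fields $Z$ are, at leading order in $\rho$, the flat Killing fields whose normal components generate $V$, and that the $\mathcal O(\rho)$ corrections (from the metric expansion \eqref{eq.metricnormalcoord}, the perturbation $(\omega,v,Y)$ of size $\mathcal O(\rho^2)$, and the approximate equi-angularity error which is now exactly zero) do not degenerate the spanning; and (iii) observing that one rotational direction is "used up" aligning along $\s$ — indeed, for $H_0 = 0$ the symmetric bubble is rotationally invariant about $\s$, which is why Remark \ref{r:uniq} and Theorem \ref{th.mainsymmetric} treat that case separately. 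Concretely I would argue that the $(2m+1)\times(2m+1)$ Gram-type matrix pairing the parameter-variations against a basis of $V$ is invertible for $\rho$ small because it is a small perturbation of the corresponding flat matrix, which is invertible by the nondegeneracy result of \cite{dim}. Once $f_\sigma \equiv 0$, the mean curvatures are the prescribed constants, the balance $\mathring H_1 = \mathring H_0 + \mathring H_2$ follows from $H_1 = H_0 + H_2$, and the equi-angularity is already built into the pseudo-bubble by construction, completing the proof.
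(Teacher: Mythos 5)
Your overall plan matches the paper's: differentiate $\Psi_\rho = \mathcal{E}(\Sigma^\flat_{(p,\s),\rho})$ along parameter curves, use the pseudo-bubble equation $\rho R_\sigma \mathring H_\sigma - m = -v^\sigma$ from Proposition \ref{prop.pseudoproperty} to identify the mean-curvature defect with the kernel component $v \in V$, annihilate the boundary integral via the genuine equi-angularity, and then use the classification of $V$ from \cite{dim} to force $v \equiv 0$. However, there is one genuine gap and one notable divergence from the paper's closing argument.

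The gap is in your step (ii), which you phrase as ``checking that the associated fields $Z$ are, at leading order in $\rho$, the flat Killing fields whose normal components generate $V$''. This is precisely the nontrivial analytic heart of the proof and you offer no mechanism for it. When you compare the two nearby pseudo-bubbles $\Sigma^\flat_{(p(t),\s(t)),\rho}$ and $\Sigma^\flat_{(p,\s),\rho}$, what you need is a diffeomorphism $F_t$ between them whose velocity $\mathcal{Z} = \tfrac{d}{dt}\big|_{t=0}F_t$ differs from the infinitesimal isometry $X$ (the parallel transport of $\tilde\Xi$) by $\mathcal{O}(\rho^2)\norm{\Xi}$. The danger --- made explicit in the paper around Figure \ref{fig.wild} --- is that the \emph{tangential} component of such a diffeomorphism can drift wildly even though the Hausdorff distance between the two surfaces is $\mathcal{O}(t\rho^2)$; the size of the perturbations $\omega, v, Y$ alone does not control it. To tame this, the paper invokes Theorem \ref{th.technical} (from \cite{cic}) to build a diffeomorphism $F_t$ whose tangential displacement is quantitatively controlled, \emph{and} composes with the roto-translations $K_t^{-1}$ before applying that theorem (Remark \ref{rem.rotationchoice}) so that the comparison surface is a truly small perturbation of the reference one. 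Only after this does the bound $\norm{\mathcal{Z} - X}_G \le C\rho^2\norm{\Xi}$ in \eqref{eq.pxbound} become available. Without this input, the passage from ``$d\Psi_\rho(\tilde\Xi)=0$'' to an integral identity tested against $\scal{\Xi}{N^\sigma}$ with controlled error is unjustified.

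Separately, the way you propose to conclude differs from the paper's and requires more than you state. You suggest a Gram-matrix/spanning argument: show the $2m+1$ test functions $\scal{\Xi}{N^\sigma}$ span (a perturbation of) $V$ and invoke invertibility of the pairing. The paper instead exploits that $v_{(p,\s),\rho} \in V$ \emph{itself} corresponds, via \cite{dim}, to a single infinitesimal isometry $\Xi_{(p,\s),\rho}$ with $\scal{\Xi_{(p,\s),\rho}}{N^\sigma} = v^\sigma$; plugging that one test direction into the inequality obtained from $d\Psi_\rho = 0$ gives directly
\[ \Big(\sum_\sigma \int_{\Sigma^\sigma}\tfrac{1}{R_\sigma}(v^\sigma)^2\Big)^{1/2} \le c\rho^2\Big(\sum_\sigma \int_{\Sigma^\sigma}\tfrac{1}{R_\sigma}(v^\sigma)^2\Big)^{1/2}, \]
hence $v\equiv 0$ for $\rho$ small, with no need to establish a uniform lower bound on an entire Gram matrix. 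Your route would also work in principle, but it is both more work (you must rule out near-degeneracy of all $2m+1$ directions simultaneously, and handle the $\mathcal{O}(\rho^2)$ errors in each matrix entry) and still rests on the same uncontrolled-tangential-drift issue: each entry of your Gram matrix is an integral involving $\mathcal{Z}$, not $X$, and without the bound \eqref{eq.pxbound} you cannot replace one by the other.
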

Before proceeding with the proof of this result, we need to recall a deep technical result from \cite{cic}, presented here in a weaker version. Roughly speaking, this result says that given one reference hypersurface with boundary and its image through a diffeomorphism, one can bound effectively the size of the tangential displacement in terms of the Hausdorff distance $d_H$ between the two hypersurfaces. We will use this lemma to compare two pseudo-bubbles when close enough, and write one as a perturbation of the other, while \emph{controlling quantitatively} the tangential component of this perturbation. Given a hypersurface $S \subset \R^{m+1}$ with boundary, and a value $\theta>0$, set $[S]_\theta:=\set{x \in S \mid d_{\R^{m+1}}(x,\partial S)>\theta}$.
\begin{theorem}[Theorem $3.1$, Remark $3.4$ in \cite{cic}]\label{th.technical}
For any natural number $m \ge 2$, real numbers $\alpha \in (0,1]$ and $L>0$, there exist $\mu_0 \in (0,1)$ and $C_0>0$ depending only on $m$, $\alpha$ and $L$ with the following property.
Let $S_0$ be a compact connected $m-$dimensional $C^{2,1}-$manifold with boundary in $\R^{m+1}$ such that its normal vector field $N_{S_0}$ is of class $C^{1,1}$.
Let $S$ be a compact connected $m-$dimensional $C^{1,\alpha}-$manifold with boundary such that its normal vector field $N_S$ satisfies for every $x,y \in S$
\begin{equation}\label{eq.conditionS}
|N_S(x)-N_S(y)|\le L |x-y|^{\alpha}, \quad \text{and} \quad  |\scal{N_S(x)}{(y-x)}| \le L|y-x|^{1+\alpha},
\end{equation}
and for some $\theta \in (0,\mu_0^2)$ we have $d_{H}(S,S_0)\le \theta$. Assume in addition that there exists a $C^{1,\alpha}-$diffeomorphism $f$ between $\partial S_0$ and $\partial S$ with
\begin{equation}\label{eq.conditionf}
\begin{aligned}
\norm{f}_{C^{1,\alpha}(\partial S_0)}&\le L,\\
\norm{f-Id}_{C^{1}(\partial S_0)}&\le \theta,\\
\norm{N_S(f)-N_{S_0}}_{C^{0}(\partial S_0)}&\le \theta,\\
\norm{\nu_S(f)-\nu_{S_0}}_{C^{0}(\partial S_0)}&\le \theta.
\end{aligned}
\end{equation}
Finally, suppose there exists $\psi \in C^{1,\alpha}([S_0]_{\theta})$ such that
\begin{equation}\label{eq.conditionpsi}
[S]_{3 \theta} \subset (Id+\psi N_{S_0})([S_0]_{\theta})\subset S, \quad \norm{\psi N_{S_0}}_{C^{1,\alpha}([S_0]_{\theta})}\le L, \quad \norm{\psi N_{S_0}}_{C^{1}([S_0]_{\theta})}\le \theta.
\end{equation}
Then there exists a $C^{1,\alpha}-$diffeomorphism $F$ between $S_0$ and $S$ such that
\begin{align}
F&=f \quad \text{on} \quad \partial S_0, \label{eq.Fboundary}\\
F&=Id+\psi N_{S_0} \quad \text{on} \quad [S_0]_{\sqrt{\theta}}, \label{eq.Fnormal}\\
\norm{F}_{C^{1,\alpha}(S_0)}&\le C_0,\label{eq.Fholder}\\
\norm{F-Id}_{C^{0}(S_0)}&\le C_0 (d_H(S,S_0)+\norm{f-Id}_{C^{1}(\partial S_0)}+\norm{\psi N_{S_0}}_{C^{0}([S_0]_{\theta})}).\label{eq.Finfty}
\end{align}
\end{theorem}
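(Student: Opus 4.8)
The plan is to build $F$ by interpolating, in a thin collar of $\partial S_0$, between the prescribed boundary diffeomorphism $f$ and the prescribed interior normal graph $G_0 := Id + \psi N_{S_0}$, and then to check that the resulting map is a global $C^{1,\alpha}$-diffeomorphism onto $S$ with the stated bounds. First I would fix tubular neighbourhoods of uniform size: since $S_0$ is $C^{2,1}$ its nearest-point projection $\pi_{S_0}$ is $C^{1,1}$ on a neighbourhood of width $r_0(m,L)$, and the bounds \eqref{eq.conditionS} produce a nearest-point projection $\pi_S$ of $S$ (with $C^{1,\alpha}$ normal parametrisation) on a neighbourhood of width $r_1(m,L)$. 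I then extend $f$ over the collar $\mathcal{N}_0 := \{x + t\,\nu_{S_0}(x) : x \in \partial S_0,\ t \in [0,\delta)\} \subset S_0$, with $\delta = \delta(m,L)$ uniform, by setting $\Phi(x + t\,\nu_{S_0}(x)) := \pi_S\big(f(x) + t\,\nu_S(f(x))\big)$; using $d_H(S,S_0) \le \theta$ and the bounds \eqref{eq.conditionf} on $f$ and on the conormals, the argument of $\pi_S$ stays within $r_1$ of $S$, so $\Phi$ is a well-defined $C^{1,\alpha}$ map of $\mathcal{N}_0$ into $S$ with $\Phi|_{\partial S_0} = f$, $\norm{\Phi}_{C^{1,\alpha}(\mathcal{N}_0)} \le C(m,L)$ and $\norm{\Phi - Id}_{C^{1}(\mathcal{N}_0)} \le C(m,L)\,\theta$.

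Next I would glue $\Phi$ to $G_0$. Note that $G_0 = Id + \psi N_{S_0}$ is defined on $[S_0]_\theta$, maps it into $S$ by the right inclusion in \eqref{eq.conditionpsi}, is a genuine normal graph over $S_0$ hence injective there, and satisfies $\norm{G_0}_{C^{1,\alpha}} \le C(m,L)$, $\norm{G_0 - Id}_{C^1} \le \theta$ by the bounds in \eqref{eq.conditionpsi}. Since $\theta < \mu_0^2 < 1$ one has $\sqrt\theta > \theta$, so I may pick a cutoff $\chi \in C^\infty(S_0;[0,1])$ with $\chi \equiv 0$ on $\{\dist(\cdot,\partial S_0) \le \tfrac12\sqrt\theta\}$, $\chi \equiv 1$ on $[S_0]_{\sqrt\theta}$, and $\norm{D^k\chi}_{C^0} \le C_k\,\theta^{-k/2}$, $[D^k\chi]_{C^{0,\alpha}} \le C_k\,\theta^{-(k+\alpha)/2}$; in particular $G_0$ is defined wherever $\chi \ne 0$. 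I then set
\begin{equation*}
\widetilde F := \chi\,G_0 + (1-\chi)\,\Phi, \qquad F := \pi_S \circ \widetilde F,
\end{equation*}
so that $\widetilde F = G_0$ on $[S_0]_{\sqrt\theta}$ and $\widetilde F = \Phi$ near $\partial S_0$. On the transition annulus $\mathcal{A} := \{\tfrac12\sqrt\theta < \dist(\cdot,\partial S_0) < \sqrt\theta\}$ both $G_0$ and $\Phi$ lie within $C(m,L)\theta$ of $Id$, hence $\norm{G_0 - \Phi}_{C^0(\mathcal{A})} \le C\theta$; since $\mathcal{A}$ has width $\sim\sqrt\theta$ we get $\norm{\nabla\chi}_{C^0}\norm{G_0-\Phi}_{C^0} \le C\theta^{-1/2}\theta = C\theta^{1/2}$, and estimating the Hölder seminorms of the products $\nabla\chi\cdot(G_0-\Phi)$ and $\chi\,\nabla(G_0-\Phi)$ in the same way yields $\norm{\widetilde F - Id}_{C^1} \le C\theta^{1/2}$ and $\norm{\widetilde F}_{C^{1,\alpha}} \le C(m,\alpha,L)$. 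Consequently $\widetilde F$ stays within $r_1$ of $S$, so $F$ is well defined, satisfies \eqref{eq.Fholder} and \eqref{eq.Finfty}, and the two prescribed identities follow directly: on $\partial S_0$, $\chi = 0$ and $\Phi = f$ already lands on $\partial S \subset S$, so $F = \pi_S \circ f = f$, which is \eqref{eq.Fboundary}; on $[S_0]_{\sqrt\theta}$, $\chi = 1$ and $G_0$ already lands on $S$, so $F = \pi_S \circ G_0 = G_0$, which is \eqref{eq.Fnormal}.

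It remains to promote $F$ to a diffeomorphism onto $S$. I would first check that $dF$ is an isomorphism onto $TS$ everywhere: on $[S_0]_{\sqrt\theta}$, $F = G_0$ is a $C^1$-small perturbation of a normal graph over $S_0$; on the thin collar $\{\dist(\cdot,\partial S_0) \le \sqrt\theta\}$, $\widetilde F$ is $C^1$-close (at scale $\theta$) to the inclusion $S_0 \hookrightarrow \R^{m+1}$ and $\pi_S$ is a submersion transverse to its image — here one uses $d_H(S,S_0)\le\theta$ together with \eqref{eq.conditionS}, \eqref{eq.conditionf} to know that $N_{S_0}$ stays uniformly close to $N_S$, so that $d\pi_S$ is injective on $TS_0$. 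Thus $F$ is a local diffeomorphism sending $\partial S_0$ diffeomorphically onto $\partial S$ via $f$ and the interior to the interior. On $\{\dist(\cdot,\partial S_0) < \sqrt\theta\}$ it is injective (a $C^1$-small perturbation of a uniformly bi-Lipschitz straightening, compatible with $f$ at the boundary) with image a one-sided neighbourhood of $\partial S$ of width $\sim\sqrt\theta$; on $[S_0]_{\sqrt\theta}$ it equals the injective map $G_0$, whose image contains $[S]_{3\theta}$ by the left inclusion in \eqref{eq.conditionpsi}. The two images overlap only in a thin annulus of $S$ on which the two descriptions of $F$ coincide, so $F$ is a bijection of $S_0$ onto $S$; equivalently, $F|_{\mathrm{int}\,S_0}$ is a proper local diffeomorphism onto the connected manifold $\mathrm{int}\,S$, hence a covering, with local degree $1$ near $\partial S$ and therefore degree $1$ everywhere. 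Being a local $C^{1,\alpha}$-diffeomorphism, $F^{-1}$ is then $C^{1,\alpha}$ as well.

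The main obstacle is the interplay of the small scales forced by the $C^{1,\alpha}$ estimate and by the geometric hypotheses. The cutoff must transition over an annulus of width $\sim\sqrt\theta$: any narrower and already the seminorm $[\nabla\chi]_{C^{0,\alpha}} \sim \theta^{-(1+\alpha)/2}$ fails to be absorbed by the gap $\norm{G_0 - \Phi} \lesssim \theta$ between the two candidate maps — this is precisely why \eqref{eq.Fnormal} can only be asserted on $[S_0]_{\sqrt\theta}$ rather than on $[S_0]_{C\theta}$ — while at the same time the global-injectivity argument has to be carried out with the competing thresholds $\theta$, $3\theta$ and $\sqrt\theta$ kept consistent, and it is here that $d_H(S,S_0)\le\theta$, the conormal bounds in \eqref{eq.conditionf} and both inclusions in \eqref{eq.conditionpsi} must be used to make every error term uniform in $\theta$; the hypotheses \eqref{eq.conditionS} (only $C^{1,\alpha}$ on $S$, but with the one-sided quadratic bound) are exactly what make $\pi_S$ and the collar of $S$ available with constants depending only on $m,\alpha,L$.
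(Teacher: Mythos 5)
This statement is not proved in the paper at all: it is quoted verbatim (in weakened form) from Theorem 3.1 and Remark 3.4 of \cite{cic}, so your attempt can only be measured against that source. Your overall strategy -- extend $f$ over a collar, glue it to the normal graph $Id+\psi N_{S_0}$ with a cutoff transitioning over an annulus of width $\sim\sqrt\theta$ so that $\norm{\nabla\chi}_{C^0}\sim\theta^{-1/2}$ is beaten by the $O(\theta)$ gap between the two maps, then argue local invertibility plus a degree/covering argument -- is the right heuristic and does explain why \eqref{eq.Fnormal} can only hold on $[S_0]_{\sqrt\theta}$. But there is a genuine gap at the foundation: everything rests on a nearest-point projection $\pi_S$ onto $S$ that is well defined and $C^{1,\alpha}$, with constants depending only on $m,\alpha,L$, on a tube of uniform width $r_1(m,L)$. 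The surface $S$ is only $C^{1,\alpha}$ with $\alpha$ possibly $<1$, and the hypothesis \eqref{eq.conditionS} gives no lower bound on the reach: for a graph behaving like $|x'|^{1+\alpha}$ the curvature blows up like $|x'|^{\alpha-1}$, so points with non-unique nearest point accumulate on $S$ and no uniform tubular neighbourhood exists; even where $\pi_S$ is defined it is not $C^1$ with bounds in terms of $L$ alone. Hence neither the collar extension $\Phi=\pi_S(f+t\,\nu_S(f))$ nor the final map $F=\pi_S\circ\widetilde F$ is justified as written, and \eqref{eq.Fholder}, which requires differentiating $\pi_S$, is unsupported. The standard remedy (and what makes the construction in \cite{cic} more involved) is to exploit only the regularity of $S_0$: its projection is $C^{1,1}$ on a uniform tube because $S_0\in C^{2,1}$, and in the interior one lands on $S$ through the given graph map $Id+\psi N_{S_0}$ rather than through an ambient projection onto $S$, with the boundary collar handled inside $S$ itself.

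A second, related gap is that \eqref{eq.Finfty} is asserted rather than derived, and your construction does not obviously deliver it: on the collar, $|F-Id|$ contains the displacement of the tangential ray $f(x)+t\,\nu_S(f(x))$ from $S$, which by \eqref{eq.conditionS} is only of size $L\,t^{1+\alpha}\sim\theta^{(1+\alpha)/2}$ near the transition annulus, plus a term $t\,\norm{\nu_S(f)-\nu_{S_0}}_{C^0}$; neither is controlled by the specific right-hand side $d_H(S,S_0)+\norm{f-Id}_{C^{1}(\partial S_0)}+\norm{\psi N_{S_0}}_{C^{0}([S_0]_{\theta})}$, which may be far smaller than $\theta$ (the hypotheses only give $\theta$ as an upper bound). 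Likewise your claim $\norm{\Phi-Id}_{C^{1}}\le C\theta$ uses only the $C^0$ closeness of $\nu_S\circ f$ to $\nu_{S_0}$, whereas the $x$-derivative of $t\,\nu_S(f(x))$ differs from that of $t\,\nu_{S_0}(x)$ by $O(t)$ through the merely $L$-bounded derivatives of the conormals; on the transition annulus this degrades the gap to $O(\sqrt\theta)$ and on a collar of uniform width to $O(1)$, so the cutoff estimates and the subsequent injectivity argument must be redone against these weaker bounds. These quantitative points are precisely where the proof in \cite{cic} spends its effort, so as it stands your proposal is an accurate outline of the mechanism but not a complete proof.
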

Let us remark that Theorem $3.1$ in \cite{cic} gives a stronger result under  more general assumptions, but we  preferred to state here this weakened version for the reader's convenience.

The idea of the proof of Proposition \ref{prop.criticalpointCMC} is to apply this theorem using the pseudo double bubble $\Sigma_{(p,\s),\rho}^{\flat}$ as the reference hypersurface $S_0$, and another pseudo double bubble $\Sigma_{(p',\s'),\rho}^{\flat}$ as $S$, for $(p',\s') \in UTM$ arbitrarily close to $(p,\s)$. However, we will first need to reduce everything to an analysis in a fixed ambient Euclidean space, the tangent space $T_p M$; subsequently, this theorem will allow us to patch together the diffeomorphism $f$ of the boundary with the normal perturbation $\psi$, \emph{ensuring uniform bounds} on the tangential component of the diffeomorphism $F$. The price to pay for this gluing, is to get farther from the boundary by a factor $\theta^{-\frac{1}{2}}$; this is due to the annihilation of the tangential component expressed in equation \eqref{eq.Fnormal}. These uniform bounds will be linear in the distance between $(p,\s)$ and $(p',\s')$, so that we will deduce $C^0$-bounds on the vector field generated when $(p',\s')$ approaches $(p,\s)$, thanks to \eqref{eq.Finfty}. Without this theorem, we could have degeneration of the tangential component as $(p',\s')$ tends  to $(p,\s)$ when writing $\Sigma_{(p',\s'),\rho}^{\flat}$ as a perturbation of $\Sigma_{(p,\s),\rho}^{\flat}$, {as in Figure \ref{fig.wild} below.}
\begin{figure}[h]\label{fig.wild}
\includegraphics[scale=2.3]{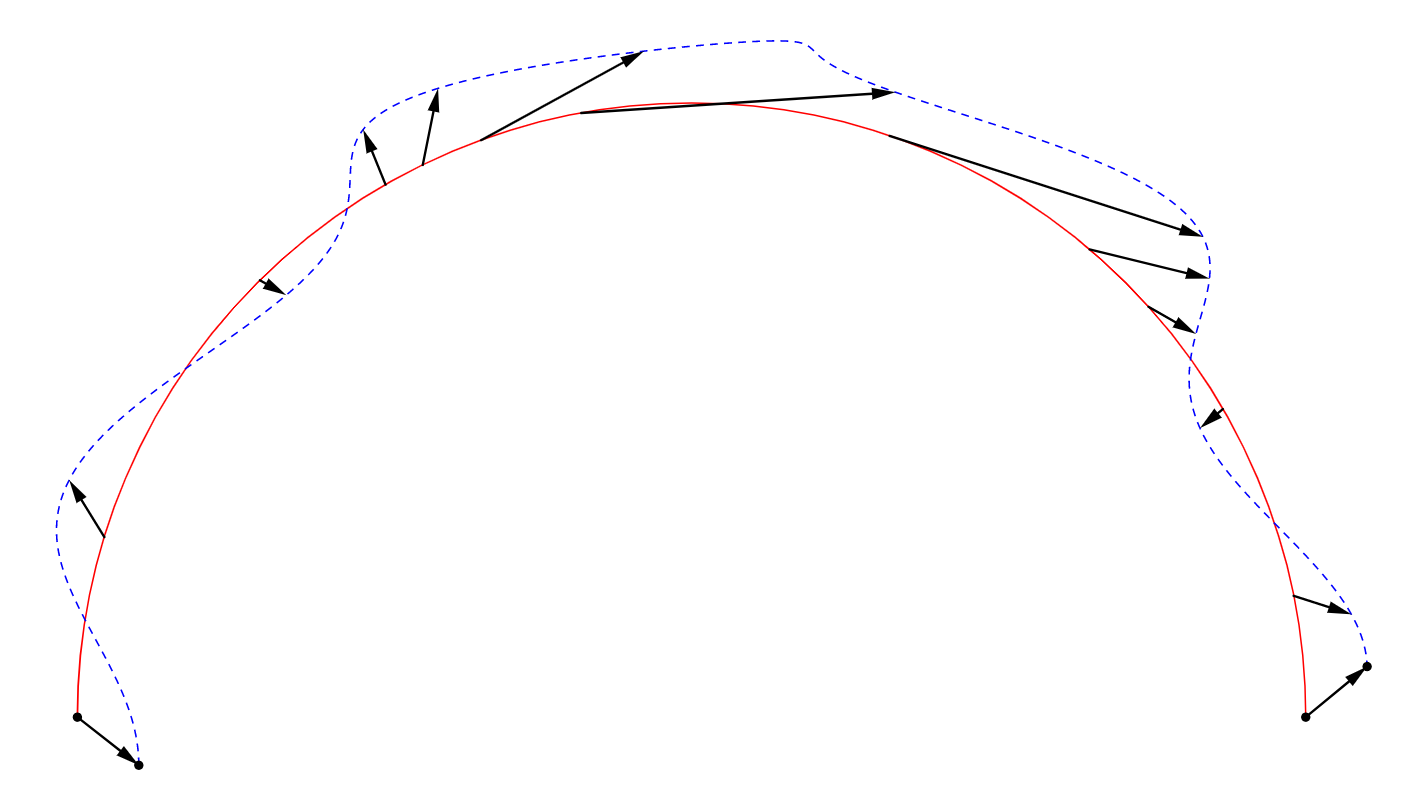}
\caption{Possible wild behaviour of the perturbation}
\end{figure}

We notice that the double bubbles considered are smooth, as remarked in the previous section, so we can find a parameter $L$ as in the statement above, which moreover can be chosen independently of the parameter $\rho<\rho_3$ and the point $(p,\s)\in UTM$ by compactness of the ambient manifold $M$.
\begin{proof}[Proof of Proposition \ref{prop.criticalpointCMC}]
Without loss of generality we will focus our attention on the asymmetric case, since the proof in the symmetric case follows the exact same lines. We drop the index $\sigma=0,1,2$ and treat every quantity as a three-vector. In the course of the proof a constant $c>0$ will appear, whose value may change from line to line, but still remains finite.
Consider a fixed critical point $(p,\s) \in UTM$ of $\Psi_\rho$; since $\Sigma_{(p,\s),\rho}^{\flat}$ is a pseudo-double bubble, by Proposition \ref{prop.pseudoproperty} it has mean curvature vector verifying $\rho R \mathring{H}=m-v_{(p,\s),\rho}$, so it is enough to show that the kernel component $v_{(p,\s),\rho} \in V$ vanishes. Following the argument in \cite{pac}, we will reconnect the differential of $\Psi_\rho$ at $(p,\s)$ to the first variation of the functional $\mathcal{E}$ defined above. In order to do that, consider an arbitrary curve $\gamma:(-\e,\e)\longrightarrow UTM$, which we write as $\gamma(t)=(p(t),\s(t))$, such that $\gamma(0)=(p,\s)$ and set $\tilde{\Xi}:=\gamma'(0) \in T_{(p,\s)}UTM$. It is convenient to identify $\tilde{\Xi}$ with a vector field $\Xi \in T_p M =\R^{m+1}$, so that it induces a family $K_t$ of isometries of $T_p M$; more precisely, we can pick $(K_t)_{t \in (-\e,\e)}$ to be a convex combination of the identity map of $T_p M$ and a composition of a fixed translation and a fixed rotation. Notice that we have $K_t(x)=x+t \Xi +\mathcal{O}(t^2)$. Set $S_0:=\Esp^{-1}(\Sigma_{(p,\s),\rho}^{\flat})$ and consider for $t$ small enough the family of double bubbles $S_t:=K_t^{-1}(\Esp^{-1}(\Sigma_{(p(t),\s(t)),\rho}^{\flat}))$. The composition with the roto-translation $K_t^{-1}$ is due to the annihilation effect described above, see Remark \ref{rem.rotationchoice} after the proof for more details on this choice.

We claim that for every $\theta>0$, the conditions in Theorem \ref{th.technical} above are satisfied for all $t$ small enough. We will then get rid of the parameter $\theta$ considering bounds of infinitesimal nature. Since the distance between $(p,\s)$ and $(p(t),\s(t))$ is approximately $|t| \norm{\Xi}$, by the norm bounds on the covariant derivatives $\nabla_{(p,\s)}\omega_{(p,\s),\rho}$, $\nabla_{(p,\s)}v_{(p,\s),\rho}$ and $\nabla_{(p,\s)}Y_{(p,\s),\rho}$ in equation \eqref{eq.nablaboundsolution}, the Hausdorff distance $d_H(S_t,S_0)$ can easily be bounded by $C \rho^2 t+\mathcal{O}(t^2)$. In fact, the exponential map distorts the distance by an order $\rho^2$, the distance in the Euclidean space would clearly be of order $t$, and the $C^0$-distance between the components $\omega_{(p,\s),\rho}$, $v_{(p,\s),\rho}$ and $Y_{(p,\s),\rho}$ and their $(p',\s')$ analogues is bounded by
\begin{equation}\label{eq.distanceperturbation}
\begin{aligned}
\norm{\omega_{(p',\s'),\rho}-\omega_{(p,\s),\rho}}_{C^0} &\le C |t| \norm{\Xi}\max_t \norm{\nabla_{(p,\s)}\omega_{(p(t),\s(t)),\rho}}_{C^0}\le C |t| \norm{\Xi} \rho^2\\
\norm{v_{(p',\s'),\rho}-v_{(p,\s),\rho}}_{C^0}&\le C |t| \norm{\Xi} \max_t \norm{\nabla_{(p,\s)}v_{(p(t),\s(t)),\rho}}_{C^0}\le C |t| \norm{\Xi} \rho^2,\\
\norm{Y_{(p',\s'),\rho}-Y_{(p,\s),\rho}}_{C^0}&\le C |t| \norm{\Xi} \max_t \norm{\nabla_{(p,\s)}Y_{(p(t),\s(t)),\rho}}_{C^0} \le C |t| \norm{\Xi} \rho^2.
\end{aligned}
\end{equation}
Since $\partial S_0$ and $\partial S_t$ are two smooth $S^{m-1}$, exactly as in \cite{pac}, we deduce the existence of normal diffeomorphisms $f_t: \partial S_0 \longrightarrow \partial S_t$; here the word ''normal'' means that the image $f_t(x) \in \partial S_t$ of any point $x \in \partial S_0$ lies in the normal space $N_x \partial S_0$ seen as a subset of $T_p M$. Moreover, arguing as above, the inequalities in \eqref{eq.distanceperturbation} implies that $\norm{f_t-Id}_{C^1(\partial S_0)} \le C |t| \norm{\Xi} \rho^2+\mathcal{O}(t^2)$, $\norm{N_{S_t} (f_t)-N_{S_0}}_{C^0(\partial S_0)} \le C |t| \norm{\Xi} \rho^2+\mathcal{O}(t^2)$ and $\norm{\nu_{S_t} (f_t)-\nu_{S_0}}_{C^0(\partial S_0)} \le C |t| \norm{\Xi} \rho^2+\mathcal{O}(t^2)$, so that \eqref{eq.conditionf} holds whenever $C |t| \norm{\Xi} \rho^2 +\mathcal{O}(t^2) \le \theta$. We can use the same argument to show \eqref{eq.conditionpsi}, again under the condition  $C |t| \norm{\Xi} \rho^2 +\mathcal{O}(t^2) \le \theta$.

Therefore, we can appeal to Theorem \ref{th.technical} and obtain for $|t|$ small enough the existence of a family of $C^{1,\alpha}$-diffeomorphisms $F_t$ between $S_0$ and $S_t$ satisfying all the conditions \eqref{eq.Fboundary}, \eqref{eq.Fnormal}, \eqref{eq.Fholder} and \eqref{eq.Finfty}. In particular, from \eqref{eq.Finfty} we deduce after applying a $t$-derivative and setting $Z:=\tfrac{d}{dt} \mid_{t=0} F_t \in T_p M$
\begin{equation}\label{eq.boundonZ}
\norm{F_t-Id}_{C^0(S_0)} \le C |t| \norm{\Xi} \rho^2 +\mathcal{O}(t^2) \Rightarrow \norm{Z}_{C^0(S_0)} \le C \norm{\Xi} \rho^2.
\end{equation}
Compose the diffeomorphism $F_t$ with $K_t$ and $\Esp$ to construct a family of diffeomorphisms $(\mathcal{F}_t)_{|t|\in (-\e,\e)}$ between the two pseudo-double bubbles $\Sigma_{p,\s}^{\flat}$ and $\Sigma_{(p(t),\s(t)),\rho}^{\flat}$, defined by $\mathcal{F}_t:=\Esp \circ K_t \circ F_t \circ \Esp^{-1}$. Once set $\mathcal{Z}:=\tfrac{d}{dt} \mid_{t=0} \mathcal{F}_t$ the vector field induced by the family on $\Sigma_{p,\s}^{\flat}$, and $X$ the parallel transport of the vector { $\tilde{\Xi}$ along geodesic in $UTM$ emanating from $(p,\s)$ (to be precise, its identification in $TM$),} equation \eqref{eq.boundonZ} implies the bound (the composition with $K_t$ gives a summand $-\Xi$)
\begin{equation}\label{eq.pxbound}
\norm{\mathcal{Z}-X}_{G} \le C \rho^2 \norm{\Xi}.
\end{equation}

We can finally rewrite the differential of $\Psi_\rho$ as a first variation of $\mathcal{E}$ and obtain
\begin{equation}
\begin{aligned}
0=&d \Psi_{p,\s}(\tilde{\Xi})= \partial \mathcal{E}(\Sigma_{(p(t),\s(t)),\rho}^{\flat})\mid_{t=0}=\tfrac{d}{dt} \mid_{t=0} \mathcal{E}(\mathcal{F}_t(\Sigma_{p,\s}^{\flat}))= \int_{(\Sigma_{(p,\s),\rho}^{\flat})^0} (\tfrac{m}{\rho R_0}-\mathring{H}_0) G(\mathcal{Z},\mathring{N}^0) \\
&+ \int_{(\Sigma_{(p,\s),\rho}^{\flat})^1} (\tfrac{m}{\rho R_1}-\mathring{H}_1) G(\mathcal{Z},\mathring{N}^1) + \int_{(\Sigma_{(p,\s),\rho}^{\flat})^2} (\tfrac{m}{\rho R_2}-\mathring{H}_2) G(\mathcal{Z},\mathring{N}^2) -\int_{\Gamma_{(p,\s),\rho}^{\flat}} G(\mathcal{Z},\mathring{\nu_0}+\mathring{\nu_1}+\mathring{\nu_2}).
\end{aligned}
\end{equation}
First of all, we use \eqref{eq.pxbound} to get
\begin{equation}\label{eq.XsubstituteZ}
\resizebox{0.99\hsize}{!}{ $
\sum_{\sigma=0,1,2} \int_{(\Sigma_{(p,\s),\rho}^{\flat})^\sigma} (\tfrac{m}{\rho R_\sigma}-\mathring{H}_\sigma) G(X,\mathring{N}^\sigma) -\int_{\Gamma_{(p,\s),\rho}^{\flat}} G(\mathcal{Z},\mathring{\nu_0}+\mathring{\nu_1}+\mathring{\nu_2})\le \sum_{\sigma=0,1,2} \int_{(\Sigma_{(p,\s),\rho}^{\flat})^\sigma} (\tfrac{m}{\rho R_\sigma}-\mathring{H}_\sigma) G(X-\mathcal{Z},\mathring{N}^\sigma).$}
\end{equation}
We would like to rewrite the above identity  in terms of integrals over the standard double bubble $\Sigma:=\Esp^{-1} (\Sigma_{(p,\s),\rho}^{\flat}) \subset T_p M$. To this aim,  recall that from the expansions \eqref{eq.perturbedfirst} and \eqref{eq.perturbedarea} obtained in the previous section, { as well as from  Proposition \ref{prop.pseudoproperty}}, we can approximate the quantities involved as follows
\begin{small}
\begin{equation*}
\begin{aligned}
G(X,\mathring{N}^\sigma)=\scal{\Xi}{N^\sigma}+\mathcal{O}(\rho^2)\norm{\Xi}; \qquad  \mathring{\nu_0}+\mathring{\nu_1}+\mathring{\nu_2}=0; \\ \tfrac{m}{\rho R_\sigma}-\mathring{H}_\sigma= \tfrac{1}{\rho R_\sigma} v_{(p,\s),\rho}^\sigma; \qquad  \area((\Sigma_{(p,\s),\rho}^{\flat})^\sigma)=\rho^m |\Sigma^\sigma|_m+\mathcal{O}(\rho^{m+2}).
\end{aligned}
\end{equation*}
\end{small}
Substituting in the expression above we arrive at
\begin{equation}\label{eq.firstvariationeuclidean}
\sum_{\sigma=0,1,2} \rho^{m}\int_{\Sigma^\sigma} \tfrac{1}{\rho R_\sigma} v_{(p,\s),\rho}^\sigma \scal{\Xi}{N^\sigma}\le c \rho^2 \norm{\Xi}\sum_{\sigma=0,1,2} \rho^{m}\int_{\Sigma^\sigma} \tfrac{1}{\rho R_\sigma} |v_{(p,\s),\rho}^\sigma|. 
\end{equation}
{ Notice that we got rid of the troublesome boundary term in \eqref{eq.XsubstituteZ}, see Remark \ref{rem.boundaryterm}.}
We have finally reconducted the problem to a situation similar to the one in \cite{pac}. { Inspired by \cite{pac}, we will deduce the triviality of the kernel component $v_{(p,\s),\rho}$ by exploiting the multiplicative factor $\norm{\Xi}$ in \eqref{eq.firstvariationeuclidean}.}

{This identity holds for a general $\tilde{\Xi}$, so for any vector field $\Xi$ on $T_p M$ generated by translations and rotations. } Arguing by scaling and compactness, as in \cite{pac}, we notice that for every such vector field we must have 
\begin{equation}
\int_{\Sigma^\sigma} \tfrac{1}{R_\sigma}  (\scal{\Xi}{N^\sigma})^2=c(\phi^\sigma,R_\sigma,m) \norm{\Xi}^2,
\end{equation}
unless $\Xi=\Xi_\sigma$ is inducing a rotation around the center $C^\sigma$ of $\Sigma^\sigma$, in which case the integral above annihilates. On the other hand, in this case we must have
\begin{equation}
\int_{\Sigma^\tau} \tfrac{1}{R_\tau}(\scal{\Xi}{N^\tau})^2=c(\phi^\tau,R_\tau,m) \norm{\Xi}^2 \quad \forall \tau \neq \sigma.
\end{equation}
In any case, we can deduce that for every vector field $\Xi$ as above
\begin{equation}
\sum_{\sigma=0,1,2} \int_{\Sigma^\sigma} \tfrac{1}{R_\sigma} (\scal{\Xi}{N^\sigma})^2=c(\phi^0,\phi^1,\phi^2,R_0,R_1, R_2,m) \norm{\Xi}^2,
\end{equation}
and hence, by the volume element expansion induced by \eqref{eq.metricnormalcoord}, we the following inequality
\begin{equation}
\sum_{\sigma=0,1,2} \int_{\Sigma^\sigma} \tfrac{1}{R_\sigma} (\scal{\Xi}{N^\sigma})^2 \ge \frac{c}{2} \norm{\Xi}^2.
\end{equation}
In particular, equation \eqref{eq.firstvariationeuclidean} becomes
\begin{equation}
\sum_{\sigma=0,1,2} \int_{\Sigma^\sigma} \tfrac{1}{\rho R_\sigma} v_{(p,\s),\rho}^\sigma \scal{\Xi}{N^\sigma}\le c \rho^2 \bigg(\sum_{\sigma=0,1,2} \int_{\Sigma^\sigma} \tfrac{1}{R_\sigma}(\scal{\Xi}{N^\sigma})^2 \bigg)^{\frac{1}{2}}\bigg( \sum_{\sigma=0,1,2} \int_{\Sigma^\sigma} \tfrac{1}{\rho R_\sigma} |v_{(p,\s),\rho}^\sigma| \bigg). 
\end{equation}
However, recall that $v_{(p,\s),\rho}$ must itself be obtained by infinitesimal translations and rotations by the main result in \cite{dim}, and therefore there exists such a vector field $\Xi_{(p,\s),\rho}$ verifying the identity $\scal{\Xi_{(p,\s),\rho}}{N^\sigma}=v_{(p,\s),\rho}^\sigma$ for every $\sigma=0,1,2$. With this choice of $\Xi$ (hence $\tilde{\Xi}$ chosen accordingly) in the equation above, we obtain
\begin{eqnarray}
\bigg(  \sum_{\sigma=0,1,2} \int_{\Sigma^\sigma} \tfrac{1}{R_\sigma} (\scal{\Xi_{(p,\s),\rho}}{N^\sigma})^2 \bigg)^{\frac{1}{2}} \le c \rho^2 \bigg( \sum_{\sigma=0,1,2} \int_{\Sigma^\sigma} \tfrac{1}{R_\sigma} |\scal{\Xi_{(p,\s),\rho}}{N^\sigma}| \bigg) \nonumber \\ \le c \rho^2 \bigg(  \sum_{\sigma=0,1,2} \int_{\Sigma^\sigma} \tfrac{1}{R_\sigma} (\scal{\Xi_{(p,\s),\rho}}{N^\sigma})^2 \bigg)^{\frac{1}{2}}.
\end{eqnarray}
For $\rho$ small enough this implies that the integrals are zero. Since the integrands are non-negative, we must have $v_{(p,\s),\rho}^\sigma=\scal{\Xi_{(p,\s),\rho}}{N^\sigma}\equiv 0$ at any point of $\Sigma^\sigma$ and for any $\sigma=0,1,2$, proving the statement about the CMC property. 
	
To conclude the proof of  Proposition, we notice that the condition  $\mathring{H}_1 = \mathring{H}_0 + \mathring{H}_2$  as well as the equi-angularity condition $\mathring{\nu}_0+\mathring{\nu}_1+\mathring{\nu}_2=0$ here imposed, would be standard consequences of the formula of first variation of the area under the volume-constraint of the 2-cluster.
\end{proof}

{
\begin{remark}\label{rem.boundaryterm}
In the course of the proof, the strong equi-angularity \eqref{eq.perturbedequiangularity} imposed had the crucial role of deleting the problematic integral on the boundary $\Gamma_{(p,\s),\rho}^{\flat}$, which cannot be proven to be proportional to $\Xi$ for the solution $(\omega_{(p,\s),\rho},v_{(p,\s),\rho},Y_{(p,\s),\rho})$.
\end{remark}
\begin{remark}\label{rem.rotationchoice}
In the proof above, we have opted to consider the family of roto-translated double bubbles $S_t$ instead of the simpler ones $S_t':=\Esp^{-1}(\Sigma_{(p,\s),\rho}^{\flat})$. The major reason for doing this is that, as briefly mentioned after Theorem \ref{th.technical}, the same theorem would produce an almost normal diffeomorphism between the reference double bubble $S_0$ and $S_t'$; heuristically, the effect of the roto-translation would tend to concentrate more and more on the boundary of $S_0$, making it much harder to show an estimate of the form \eqref{eq.pxbound}, with the crucial multiplicative term $\norm{\Xi}$ on the right hand side.
\end{remark}
}
We are finally ready to prove the our main Theorems  \ref{th.nondeg},  \ref{th.mainasymmetric} and \ref{th.mainsymmetric}.

\begin{proof}[Proof of Theorem \ref{th.nondeg}]
We begin by proving \eqref{eq.approximateasym}: in order to do this, we will exploit the expansions obtained in Subsections \ref{sub.volume}, \ref{sub.area}, \ref{sub.perturbedvolume} and \ref{sub.perturbedarea}.

Using \eqref{eq.perturbedvolume1} and \eqref{eq.perturbedvolume2}, as well as the balance equation $R_0^{-1}=R_1^{-1}-R_2^{-1}$ we get
\begin{equation*}
	\begin{aligned}
		\tfrac{m}{\rho R_1} \vol((B_{(p,\s),\rho}^{\flat})^1)&+\tfrac{m}{\rho R_2} \vol((B_{(p,\s),\rho}^{\flat})^2)= \tfrac{m}{\rho R_1} \vol(B_{(p,\s),\rho}^1)+\tfrac{m}{\rho R_2} \vol(B_{(p,\s),\rho}^2)\\
		&-\rho^m \sum_{\sigma=0,1,2} \int_{\Sigma^\sigma} \tfrac{m w_\sigma}{R_\sigma}-\tfrac{m}{m+1}div_{\Sigma^\sigma}(Y_\sigma)+Q^{(2)}(w,Y) +\rho^2 L(w,Y) d\mu_{\Sigma^\sigma}. 
	\end{aligned}
\end{equation*}
Notice that from \eqref{eq.perturbedarea}
\begin{small}
	\begin{equation*}
		\area(\Sigma_{(p,\s),\rho}^{\flat})=\sum_{\sigma=0,1,2} \area(\Sigma^\sigma_{(p,\s),\rho})-\rho^{m} \int_{\Sigma^\sigma} m\tfrac{w_\sigma}{R_\sigma}-div_{\Sigma^\sigma}(Y_\sigma)+ \rho^2 L(w,Y) + Q^{(2)}(w,Y)d\mu_{\Sigma^\sigma} + \mathcal{O}(\rho^{m+3}),
	\end{equation*}
\end{small}
hence we deduce (recall that both $w_{(p,\s),\rho}$ and $Y_{(p,\s),\rho}$ are of order $\mathcal{O}(\rho^2)$, so $\rho^2 L(w,Y)$ and $Q^{(2)}(w,Y)$ can be absorbed)
\begin{equation}\label{eq.perturbedPsi}
	\begin{aligned}
		\Psi_\rho(\Sigma_{(p,\s),\rho}^{\flat})&=\Psi_\rho(\Sigma_{(p,\s),\rho})+\rho^m \sum_{\sigma=0,1,2} \int_{\Sigma^\sigma} \tfrac{1}{m+1}div_{\Sigma^\sigma}(Y_\sigma)+\mathcal{O}(\rho^{m+3})\\
		&=\Psi_\rho(\Sigma_{(p,\s),\rho})-\rho^m  \tfrac{1}{m+1} \int_{\Gamma} \sum_{\sigma=0,1,2}u_\sigma+\mathcal{O}(\rho^{m+3})=\Psi_\rho(\Sigma_{(p,\s),\rho})+\mathcal{O}(\rho^{m+3}),
	\end{aligned}
\end{equation}
where we have used the divergence theorem and the boundary condition \eqref{eq.boundaryconditionu} to delete the sum of the $u_\sigma$'s. We now derive an expansion for $\Psi_\rho(\Sigma_{(p,\s),\rho})$. Using \eqref{eq.volumedecomposition} we see 
\begin{equation}
	\tfrac{m}{\rho R_1} (V_1)_{(p,\s),\rho}+\tfrac{m}{\rho R_2}(V_2)_{(p,\s),\rho}= \sum_{\sigma=0,1,2} \tfrac{m}{\rho R_\sigma}  \vol((P_{(p,\s),\rho})^\sigma),
\end{equation}
so that
\begin{equation}
	\Psi_\rho(\Sigma_{(p,\s),\rho})=\sum_{\sigma=0,1,2}\area(\Sigma^\sigma_{(p,\s),\rho})- \tfrac{m}{\rho R_\sigma}  \vol((P_{(p,\s),\rho})^\sigma).
\end{equation}
Let us consider the case $\sigma=1$, since the other cases can be treated nearly identically. Clearly, the critical points remain fixed if we multiply $\Psi_\rho$ by $\rho^{-m}$. From equations \eqref{eq.volumeP1} and \eqref{eq.areaasymmetric1} we obtain
\begin{equation}
	\begin{aligned}
		&\rho^{-m}\Big(\area(\Sigma^1_{(p,\s),\rho})- \tfrac{m}{\rho R_1}  \vol(P_{(p,\s),\rho}^1) \Big)= (|\Sigma^1|- \tfrac{m}{\rho R_1} |P^1|)+\tfrac{\rho^{2}}{6} R_1^{m+2} \omega_m \Big[ I_{m+1}(\phi^1)\Sc(p)\\
		&+(m \cos^2(\phi^1)I_{m-1}(\phi^1)-\sin^m(\phi^1)\cos(\phi^1))\Ric(\s,\s) \Big]+ \mathcal{O}(\rho^{3})+\tfrac{\rho^2}{6}\omega_m R_1^{m+2} \bigg[ \tfrac{m I_{m+3}(\phi^1)}{m+2} \Sc(p)\\
		&+ m\Big(\tfrac{m+3}{m+2} I_{m+3}(\phi^1)-I_{m+1}(\phi^1) \sin^2(\phi^1) \Big)\Ric(\s,\s)\bigg]+\mathcal{O}(\rho^3).
	\end{aligned}
\end{equation}
The first summand $|\Sigma^1|- \tfrac{m}{\rho R_1} |P^1|$ does not depend on $(p,\s)$, so we can get rid of it and further divide everything by $\tfrac{\rho^2}{6} \omega_m$ to get the function $\Phi_\rho$ given by (restore the sum over $\sigma=0,1,2$)
\begin{equation}\label{eq.definitionPhi}
	\Phi_\rho((p,\s)):= \tfrac{6}{\omega_m \rho^2} \Big(\rho^{-m} \Psi_\rho(p,\s)-\sum_{\sigma=0,1,2} \big( |\Sigma^\sigma|- \tfrac{m}{\rho R_\sigma} |P^\sigma|\big) \Big), 
\end{equation}
which verifies
\begin{small}
\begin{equation*}
	\begin{aligned}
		&\Phi_\rho((p,\s))\underset{C^k(UTM)}{\sim}\sum_{\sigma=0,1,2} R_\sigma^{m+2} \Big[ \Big(I_{m+1}(\phi^\sigma)+\tfrac{m I_{m+3}(\phi^\sigma)}{m+2} \Big)\Sc(p)\\
		+\Big((m \cos^2(\phi^\sigma)I_{m-1}(\phi^\sigma)&-\sin^m(\phi^\sigma)\cos(\phi^\sigma))+m(\tfrac{m+3}{m+2} I_{m+3}(\phi^\sigma)-I_{m+1}(\phi^\sigma) \sin^2(\phi^\sigma)) \Big)\Ric(\s,\s) \Big]\\
		=\sum_{\sigma=0,1,2} R_\sigma^{m+2} \Big[ \Big(I_{m+1}(\phi^\sigma)+&\tfrac{m I_{m+3}(\phi^\sigma)}{m+2} \Big)\Sc(p)+\Big((2m+1)I_{m+1}(\phi^\sigma) -\tfrac{2m+2}{m+2}\sin^{m+2}(\phi^\sigma) \cos(\phi^\sigma)\Big)\Ric(\s,\s) \Big]\\
		=:\Sc(p) A(m,H_0,H_1,H_2) -&\Ric(\s,\s) B(m,H_0,H_1,H_2)
	\end{aligned}
\end{equation*}
\end{small}
up to an order $\mathcal{O}(\rho)$, as required. The dependency of the constants $A$ and $B$ come from the fact that the standard double bubble $\Sigma$ is uniquely determined by the mean curvatures $H_\sigma'$s, after a rigid motion.

We also notice that if a contraction depends smoothly on a set of parameters, also its fixed point 
does, see e.g. Section 2.6 in \cite{bre}. This applies in particular to the construction of pseudo-bubbles 
in  Section \ref{sec.pseudodb} depending on their location and orientation, allowing to extend the 
estimate \eqref{eq.approximateasym} up to any number of derivatives in $p$ and $\s$.

\

Let now $p$, $\mu$ be as in the statement of  Theorem \ref{th.nondeg},  and let $n$  denote the multiplicity of $\mu$ 
as an eigenvalue of $\Ric_p$. Let us assume first that $n > 1$. 
By continuity of the eigenvalues of $\Ric$, we can find a neighborhood $\mathcal{V}$ of $p$  and  numbers $0  < \delta_0 \ll \delta_1$ 
such that every eigenvalue $\eta$ of $\Ric_q$ satisfies either $|\eta - \mu| < \delta_0$ or $|\eta - \mu| > \delta_1$ for all 
$q \in \mathcal{V}$. We call $\mathcal{H}_q$ the direct sum of the eigenspaces corresponding to eigenvalues of the 
first kind, and $\tilde{\mathcal{H}}_q$  the direct sum of the eigenspaces corresponding to those of the 
second kind. Notice that these two subspaces are orthogonal, that $\mathcal{H}_q$ depends smoothly on $q$ and 
that $\mathcal{H}_q$ has dimension $n$ for all $q \in \mathcal{V}$. 

For each $q \in \mathcal{V}$ we denote by $UTM_q$ the fiber of $UTM$ over $q$, and by 
$S_q$ the $(n-1)$-dimensional sphere given by $\mathcal{H}_q \cap UTM_q$.  
By the expansion \eqref{eq.approximateasym} (and its differential counterpart), $S_q$ is a 
manifold of approximate critical points for $\Phi_\rho|_{UTM_q}$, and  $\Phi_\rho|_{UTM_q}$ 
is orthogonally non-degenerate on $S_q$, and hence near $S_q$ one can control the normal 
component of the spherical gradient $\nabla (\Phi_\rho|_{UTM_q})$ using displacements normal 
to $S_q$. Similarly, since $p$ is non-degenerate for the scalar curvature, if $\Pi$ denotes the 
natural projection $UTM \to M$, one can control $\Pi_* \nabla \Phi_\rho$ by properly varying the 
base point $q$, see Figure \ref{fig.6}.

\begin{figure}[h] \label{fig.6}
	\begin{minipage}[l]{0.48\linewidth}
		\includegraphics[scale=1.1]{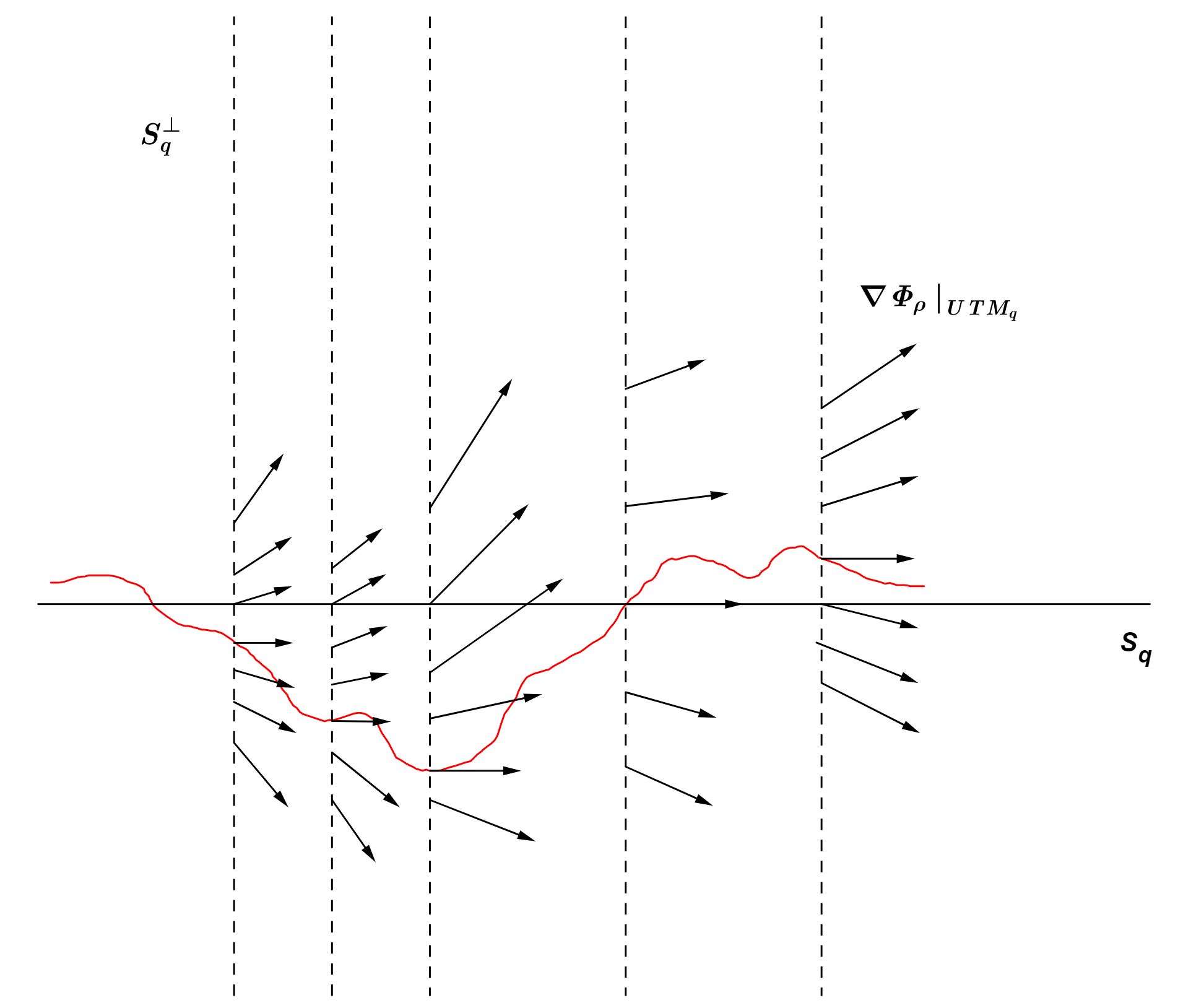}
	\end{minipage} \hspace{0.5cm}
	\begin{minipage}[r]{0.3\linewidth}
		\includegraphics[scale=0.3]{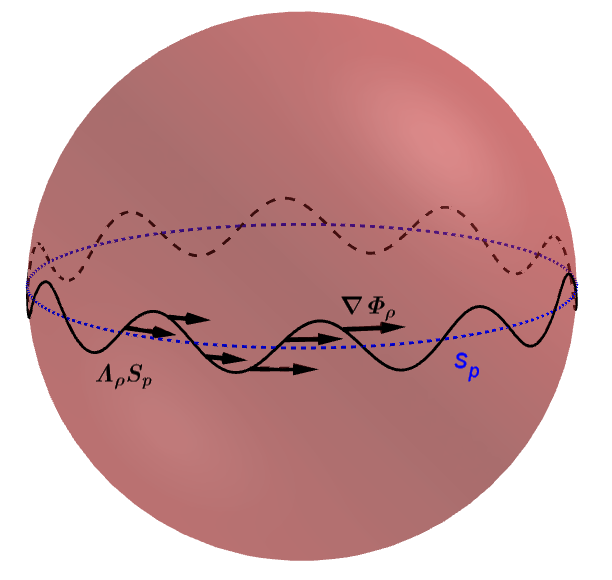} 
	\end{minipage}
	\caption{The behaviour of $\nabla  \Phi_\rho|_{UTM_q}$ near $S_p$ and $\nabla \Phi_\rho|_{UTM_q}$ 
	on $\Lambda_\rho S_p$}
\end{figure}

For these reasons, for $\rho$ small one can find an embedding $\Lambda_\rho : S_p \to UTM$ 
with the following properties 
\begin{equation}\label{eq.red-sphere}
	  \begin{cases}
  	\Pi_* \nabla \Phi_\rho (\Lambda_\rho \vartheta) = 0 & \hbox{ for every } \vartheta \in S_p; \\
  	 \nabla \Phi_\rho (\Lambda_\rho \vartheta) \hbox{ is parallel to } S_{\Pi \Lambda_\rho \vartheta} 
  	& \hbox{ for every } \vartheta \in S_p, 
  \end{cases}
\end{equation}
with the image of $\Lambda_\rho$ converging smoothly to the identity map of $S_p$ as 
 $\rho \to 0$. 
Concerning the second condition in \eqref{eq.red-sphere}, since $\Pi_* \nabla \Phi_\rho (\Lambda_\rho \vartheta) = 0$, 
$\nabla \Phi_\rho (\Lambda_\rho \vartheta)$ is tangent to the fiber over $\Pi \Lambda_\rho \vartheta$, 
so we are viewing the gradient as an element of $T_{\Pi \Lambda_\rho \vartheta} M$.

Reasoning as for the proof of Proposition \ref{prop.criticalpointCMC} and using \eqref{eq.red-sphere}, one can show that a 
critical point of the restriction $\Phi_\rho|_{\Lambda_\rho(S_p)}$ is indeed a free critical point of 
$\Phi_\rho$. 
Applying then Proposition \ref{prop.criticalpointCMC}, the existence of a CMC double-bubble 
aligned along $S_p$ follows, proving the theorem. 

The case of multiplicity $n = 1$ can be directly proved via the implicit function theorem on $\Phi_\rho$, 
which also yields uniqueness once an orientation in the simple eigenspace is chosen, 
by the non-degeneracy of  critical point for the reduced functional. In case 
$H_0 = 0$, the pseudo-bubbles stay invariant by revesing their axis, as noticed in the next proof, 
which gives then uniqueness of critical configurations oriented in the given eigenspace.  
\end{proof}

\begin{proof}[Proof of Theorem \ref{th.mainasymmetric}]
The first result simply follows from Proposition \ref{prop.criticalpointCMC}, the fact that 
the domain of $\Phi_\rho$ is $UTM$, and from Lusternik-Schnirelman's theory. 
	
If $p$ is a non-degenerate critical point of the scalar curvature and $\mu$ an eigenvalue 
of the Ricci tensor, 	we apply the same final argument in the proof of Theorem 
\ref{th.nondeg}, noticing that the category of $\Lambda_\rho S_p$ is equal to $2$. 
The fact that $H_1 \neq H_2$ avoids the possible geometric equivalence 
of multiple critical points of $\Phi_\rho|_{\Lambda_\rho S_p}$, as it might indeed 
happen for antipodal critical points in the symmetric case. 
\end{proof}

\begin{proof}[Proof of Theorem \ref{th.mainsymmetric}]
We begin by proving the  bounds in equation  \eqref{eq.approximatesym}. 
In the symmetric case, we still define the function $\Phi_\rho$ through equation \eqref{eq.definitionPhi}. This time, we use equations \eqref{eq.perturbedvolume1sym}, \eqref{eq.perturbedvolume2sym}, \eqref{eq.perturbedarea}, \eqref{eq.perturbedareadisk}, \eqref{eq.geodesicvolumesym} and \eqref{eq.areasymmetric} to finally arrive to
the expression
\begin{equation*}
\begin{aligned}
R^{m+2}\Phi^{sym}_\rho((p,\s)):=&\Phi_\rho((p,\s))\underset{C^k(\mathbb{P}TM)}{\sim} R^{m+2} \Big(\tfrac{1}{m+2} \big(\tfrac{\sqrt{3}}{2} \big)^{m+2}+2 I_{m+1}(\tfrac{2}{3}\pi)+\tfrac{m I_{m+3}(\tfrac{2}{3}\pi)}{m+2} \Big)\Sc(p)\\
&+R^{m+2}\Big(\tfrac{m}{2}I_{m-1}(\tfrac{2}{3}\pi)+\tfrac{2m+1}{2m+4}\big(\tfrac{\sqrt{3}}{2} \big)^m-\tfrac{3m}{2}I_{m+1}(\tfrac{2}{3}\pi) +\tfrac{m(m+3)}{m+2}I_{m+3}(\tfrac{2}{3}\pi)\Big)\Ric(\s,\s)\\
=:&R^{m+2} \big( \Sc(p) A^{sym}(m) -\Ric(\s,\s) B^{sym}(m) \big)
\end{aligned}
\end{equation*}
up to an order $\mathcal{O}(\rho^2)$, as required. This better approximation comes from Remarks \ref{rem.volumesymmetricbetter} and \ref{rem.areasymmetricbetter}, valid for symmetric geodesic double bubbles, and the fact that $\rho^2 L(w,Y)$ and $Q^{(2)}(w,Y)$ are of order $\mathcal{O}(\rho^4)$, leading together to a better approximation in \eqref{eq.perturbedPsi}. This yields \eqref{eq.approximatesym}. 

We next discuss the validity of \eqref{eq.symmetry-Phi-rho}: this follows from the fact that, by uniqueness 
of the fixed point, the perturbation produced in Section \ref{sec.pseudodb} for a couple $(p,-\s)$ 
is the antipodal of that for $(p,\s)$. The equality in \eqref{eq.symmetry-Phi-rho} then follows, 
and  a straightforward application of Proposition \ref{prop.criticalpointCMC} and critical point theory gives the existence of at least $cat(\mathbb{P}TM)$ constant mean curvature double bubbles respectively in the   symmetric case.

When $p$ is a non-degenerate critical point of the scalar curvature, if $\Lambda_\rho$ is as in the 
proof of Theorem \ref{th.nondeg}, the latter reasoning yields 
also antipodal symmetry of $\Phi_\rho|_{\Lambda_\rho}$: one can then use Krasnoselski's 
genus to find the desired multiplicity result, 
see e.g. Chapter 10 in \cite{ambmal2}. 
\end{proof}

Concluding this paper, we recall that some of the double bubbles we constructed are candidates solutions for the isoperimetric problem under two small volume constraints. It would be interesting to characterize these minima as some of the configurations we constructed here. This has already been done for ambient flat tori in \cite{alv} and in two-dimensional general ambient surfaces in \cite{morwic}. The study of further cases will be the scope of a forthcoming paper.

%
%
%

\end{document}